\theoremstyle{plain}
\newtheorem{theorem}{Theorem}[section]
\newtheorem{lemma}[theorem]{Lemma}
\newtheorem{proposition}[theorem]{Proposition}
\theoremstyle{definition}
\newtheorem{assumption}[theorem]{Assumption}
\theoremstyle{remark}
\newtheorem{remark}[theorem]{Remark}
\newcommand{\eps}{\varepsilon}
\newcommand{\kap}{\varkappa}
\newcommand{\BR}{\mathbb{R}}
\newcommand{\BP}{\mathbb{P}}
\newcommand{\BE}{\mathbb{E}}
\newcommand{\BZ}{\mathbb{Z}}
\newcommand{\BN}{\mathbb{N}}
\newcommand{\dd}{\mathsf{d}}
\newcommand{\hh}{\mathsf{h}}
\newcommand{\jj}{\mathsf{j}}
\newcommand{\ind}{\mathds{1}}
\begin{document}
\title[Asymptotic analysis of dynamical systems with sampling and jump noise]{Asymptotic analysis of dynamical systems driven by Poisson random measures with periodic sampling$^\dagger$
%Asymptotic study of a class of nonlinear controlled systems with small white noise and fast periodic sampling
}\thanks{$\dagger$ An earlier version of this work was completed as a part of my Ph.D. thesis at the Department of Mathematics, Indian Institute of Technology
Gandhinagar, India.}

\author[Shivam Singh Dhama]{}
%\author{Shivam Dhama$^*$}
\address{Department of Mathematics and Statistics, Boston University, Boston, Massachusetts, 02215, usa}
%\address{Discipline of Mathematics\\ Indian Institute of Technology Gandhinagar}
\email{ssdhama@bu.edu, shivamsd.maths@gmail.com} 

%\author{Chetan D. Pahlajani}
%\address{Discipline of Mathematics\\ Indian Institute of Technology Gandhinagar}
 %\email{cdpahlajani@iitgn.ac.in}
 \subjclass[2020]{Primary: 60F17.}
 \keywords{Stochastic differential equation, Poisson random measure, hybrid dynamical system, periodic sampling, multiple scales}
 
%\thanks{The second author acknowledges research support from DST SERB Project No. EMR/2015/000904.}
\thanks{$^*$ Corresponding author}
%\thanks{$^*$ Corresponding author: Shivam Dhama}
%\date{\today. Project commenced April 14, 2022.}
%sampled-data system,

\maketitle

%\title{Asymptotic analysis of dynamical systems driven by Poisson random measures with periodic sampling \thanks{This work was completed at the Department of Mathematics, Indian Institute of Technology
%Gandhinagar, Gandhinagar, 382055, Gujarat, India}
%%Asymptotic study of a class of nonlinear controlled systems with small white noise and fast periodic sampling
%}

\centerline{\scshape Shivam Singh Dhama$^*$}
\medskip
{\footnotesize
 
 \centerline{Department of Mathematics and Statistics, Boston University, Boston, Massachusetts, 02215, {\sc usa}}
} 
%\bigskip
%\address{Discipline of Mathematics, Indian Institute of Technology Gandhinagar, Palaj, 382055, India}
\begin{center}
\rule{17cm}{.01mm}
\end{center}
\begin{abstract}
In this article, we study the dynamics of a nonlinear system governed by an ordinary differential equation under the combined influence of fast periodic sampling with period $\delta$ and small \emph{jump} noise of size $\varepsilon, 0< \varepsilon,\delta \ll 1.$ The noise is a combination of Brownian motion and Poisson random measure. The instantaneous rate of change of the state depends not only on its current value \textit{but} on the most recent measurement of the state, as the state is measured at certain discrete-time instants. As $\varepsilon,\delta \searrow 0,$ the stochastic process of interest converges, in a suitable sense, to the dynamics of the deterministic equation. Next, the study of rescaled fluctuations of the stochastic process around its mean is found to vary depending on the relative rates of convergence of small parameters $\varepsilon, \delta$ in different asymptotic regimes. We show that the rescaled process converges, in a strong (path-wise) sense, to an effective process having an extra drift term capturing both the sampling and noise effect. Consequently, we obtain a first-order perturbation expansion of the stochastic process of interest, in terms of the effective process along with error bounds on the remainder.
\end{abstract}
\begin{center}
\rule{17cm}{.01mm}
\end{center}
  %\\[4pt]
 %\textit{Keywords:} hybrid dynamical system, periodic sampling,~ stochastic differential equation.

%%------------------------------------------------

\section{Introduction}\label{S:Intro}
In many control problems, it is often seen that the measurements (samples) of the state of the system of interest may not be available at all time instants. This issue motivates one to study a \textit{hybrid dynamical system} ({\sc hds}) with a given state-feedback control law. In this scenario, the dynamics of the system---described by an ordinary differential equation ({\sc ode})---evolves continuously with time, whereas the control inputs change via state samples at certain discrete-time instants. The control law remains constant between two discrete-time instants. Thus, the interaction of dynamics of the system at continuous and discrete-times makes it a {\sc hds}. For more details on {\sc hds}, we refer to \cite{GST-HybDynSys-book}. {\sc hds} appears very often in sampled-data-systems where the dynamics of system is controlled by a digital computer \cite{YuzGoodwin-book}. In these systems, a so-called \textit{sample-and-hold} implementation is frequently used. By this, we mean the samples of the state are measured at certain closely time instants and the control input is computed using state feedback control law; the control is then held fixed until the next sample of the state is taken.  For the explicit computation of time between samples, one can see \cite{NesicTeelCarnevale-TAC2009}. 

It is often noticed that the systems of interest in various applications are subjected to some small noises. The dynamics of such cases is described by stochastic differential equations ({\sc sde})\cite{applebaum2009levy,ikeda2014stochastic,KS91}. The behavior of dynamical systems perturbed by a small white noise has been extensively studied in \cite{FW_RPDS}, and references therein. Getting motivation from the asymptotic analysis presented in Chapter 2 of \cite{FW_RPDS}, we have explored this asymptotic behavior for general nonlinear systems in the presence of fast periodic sampling and small state-dependent white noise, see, \cite{dhama2022fluctuation}; for linear case, we refer to \cite{dhama2020approximation}. To the best of our knowledge, the dynamics of a class of nonlinear systems under the combined effects of sampling and noise with small {jumps} has not been explored yet. Thus, a natural question in the context of an extension of our sampling problem is: \emph{can one extend the results of our previous work \cite{dhama2020approximation,dhama2022fluctuation} for a significantly bigger class of nonlinear systems with the Poisson random measures case?} This paper answers this question affirmatively.
%\footnote{For a complete detail of the {\sc prm}, one can see \cite[Section 2.3.1]{applebaum2009levy}. A few more references relevant to {\sc prm} are \cite{ikeda2014stochastic,kunita2004stochastic,
%protter2005stochastic,ken1999levy}.}

The goal of the present work is to study the asymptotic analysis of a nonlinear system governed by an {\sc ode} under the combined effect of periodic sampling with period $\delta$ and small noise of size $\eps, 0< \eps,\delta \ll 1.$ Here, noise is a combination of Brownian motion and Poisson random measures ({\sc prm}). The measurements (samples) of the state are taken every $\delta$ time units and the vector field of {\sc ode} depends on the current value of the state as well as its most recent measurement. We show that the dynamics of the resulting stochastic process indexed by two small parameters $\eps,\delta$ is close to the idealized deterministic system in the limit as $\eps,\delta \searrow 0$ over time interval $[0,T],$ for any fixed $T>0.$ Further, we explore the fluctuation analysis of the stochastic process of interest around its mean behavior and, interestingly, in the limiting {\sc sde} of the rescaled process, we get an extra \textit{effective drift} term. The limiting {\sc sde}, in fact, captures both the sampling and noise effects. Here, the fluctuation
behavior is found to vary, depending on the relative rates at which the two small parameters $\eps$ and $\delta$ approach zero. As a consequence, we get a first-order perturbation expansion for the stochastic process in powers of small parameter together with bounds on remainder. 

More precisely, in this paper, the {\sc sde} of our main interest is 
\begin{equation}\label{E:sde-intro}
dY^{\eps,\delta}_t = c\left(Y^{\eps,\delta}_{t-}, Y^{\eps,\delta}_{{\delta \lfloor t/\delta \rfloor-}}\right)dt + \eps \sigma(Y^{\eps,\delta}_{t-}) dW_t 
+ \eps  \int_{0<|x|<1}F(Y^{\eps,\delta}_{t-},x)\widetilde{N}(dt,dx), \quad Y^{\eps,\delta}_0=y_0,
\end{equation}
%\begin{multline}\label{E:sde-intro}
%dY^{\eps,\delta}_t = \left[AY^{\eps,\delta}_{t-} -BK Y^{\eps,\delta}_{{\delta \lfloor t/\delta \rfloor-}}\right]dt + \eps \sigma(Y^{\eps,\delta}_{t-}) dW_t \\
%+ \eps  \int_{0<|x|<1}F(Y^{\eps,\delta}_{t-},x)\widetilde{N}(dt,dx), \quad Y^{\eps,\delta}_0=y_0,
%\end{multline}
%where $A,B,K$ are some real matrices of appropriate dimensions and the functions $\sigma: \BR^n \to \BR^{n \times n}$ and $F: \BR^n \times \BR^n \to \BR^n,~n\in \BN$ are assumed to be measurable and satisfy a certain regularity condition.
where the functions $c: \BR^n \times \BR^n \to \BR^n,$ $\sigma: \BR^n \to \BR^{n \times n}$ and $F: \BR^n \times \BR^n \to \BR^n,~n\in \BN$ are assumed to be measurable and satisfy a certain regularity condition. The processes $W$ and $\widetilde{N}$ represent a Brownian motion and compensated Poisson random measure, respectively and they are assumed independent to each other. $\lfloor \cdot \rfloor$ denotes the greatest integer function and $0< \eps,\delta \ll 1$. It is worth mentioning that equation \eqref{E:sde-intro} can be thought of as a random perturbation of a nonlinear control system $\dot{y}=c'(y,u),\thinspace c':\BR^n\times \BR^m \to \BR^n$ with a feedback control law $u=\kappa(y)$ via its sample-and-hold implementation; where by latter we mean the control function $u$ is updated at the time instants $k \delta,~k\in \BZ^+$ and it remains fixed in the sampling interval $(k\delta,(k+1)\delta).$ Therefore, in equation \eqref{E:sde-intro}, the sampling effect is through the term $Y^{\eps,\delta}_{{\delta \lfloor t/\delta \rfloor-}}$. Here, we also note that the dynamics of the deterministic counterpart of equation \eqref{E:sde-intro} (i.e.\footnote{Here, for any $t \ge 0$ and $x(t)\in \BR^n$, $x_t \triangleq x(t)$ and $\dot{x}_t \triangleq \frac{dx_t}{dt}$.}, $\dot{y}^{\delta}_t =  c(y_t^{\delta},y_{{\delta \lfloor t/\delta \rfloor}}^{\delta}), \thinspace y_0^\delta= y_0 \in \BR^n$) is fully non-linear. In our first result, viz., Theorem \ref{T:LLN}, we show that the stochastic process $Y_{t}^{\eps,\delta},$ $t\in [0,T]$ converges, as $\eps,\delta \searrow 0,$ to the dynamics $y_t$ which solves the deterministic equation $\dot{y}_t = c(y_t,y_t)$. Next, in our second main result (Theorem \ref{T:fluctuations-R-1-2-Levy})---which is the key contribution of this manuscript---we study the asymptotic analysis of the rescaled fluctuation process $Z^{\eps,\delta}_t \triangleq \eps^{-1}({Y^{\eps,\delta}_t - y_t})$ as $\eps,\delta$ vanish.
This result essentially depends on the interaction of the small parameters $\eps$ and $\delta.$ Following \cite{FreidlinSowers,KS2014}, we assume $\delta=\delta_{\eps}$ and $\lim_{\eps\searrow 0}\delta_\eps/\eps=\ell \in [0,\infty]$ exists, and the values $\ell=0$, $\ell \in (0,\infty)$ and $\ell=\infty$ correspond to Regimes 1, 2, and 3 respectively. We demonstrate that, in the first two regimes, the fluctuation process $Z_t^{\eps,\delta}$ can be approximated by an effective process $Z_t$ which solves the {\sc sde}
\begin{multline}\label{E:lim-fluct-R-1-2-Levy-intro}
Z_t= \int_0^t\{D_1 c(y_s,y_s)+D_2 c(y_s,y_s)\}Z_{s-} \thinspace ds +\frac{\ell}{2} \int_0^t D_2 c(y_s,y_s)\cdot c(y_s,y_s)\thinspace ds \\ + \int_0^t \sigma(y_s) dW_s
+\int_0^t\int_{0<|x|<1}F(y_s,x)\widetilde{N}(ds,dx).
\end{multline}
%\begin{multline}\label{E:lim-fluct-R-1-2-Levy-intro}
%Z_t= (A-BK)\int_0^t Z_{s-} \thinspace ds +BK \frac{\cc}{2} \int_0^t (A-BK)y_s \thinspace ds + \int_0^t \sigma(y_s) dW_s\\
%+\int_0^t\int_{0<|x|<1}F(y_s,x)\widetilde{N}(ds,dx).
%\end{multline}
In this work, we use a strong (path-wise) notion  of convergence through calculating the explicit bounds for the terms $\BE\left[\sup_{0\le t \le T}|Y_t^{\eps,\delta}-y_t|^p\right],\thinspace p \in \{2,4\}$ and $\BE\left[\sup_{0 \le t \le T} |Y^{\eps,\delta}_t - y_t - \eps Z_t|^2\right]$ for any fixed $T>0$.

The work presented in this paper is novel due to two reasons: first, we consider here a significantly bigger class of nonlinear control systems $\dot{y}=c'(y,u),\thinspace c':\BR^n\times \BR^m \to \BR^n,\thinspace u=\kappa(y)$ under state-dependent jump noise effects. The  importance of this system is that it could be nonlinear in the control argument unlike \cite{dhama2022fluctuation} and the techniques here to study the asymptotic analysis are more wider (than in \cite{dhama2022fluctuation}) to accommodate the jump noise. Hence, this work can be thought of as a generalization of \cite{dhama2022fluctuation}, which studies the asymptotic analysis of a nonlinear control system $\dot{y}=f(y)+g(y)u,\thinspace u=\kappa(y)$ (affine in control) only under the white-noise effect. Second, we identify the extra drift term (i.e., $({\ell}/{2}) \int_0^t D_2 c(y_s,y_s)\cdot c(y_s,y_s) \thinspace ds$) in the limiting {\sc sde} \eqref{E:lim-fluct-R-1-2-Levy-intro} of the fluctuation process $Z_t^{\eps,\delta}$ for the jump noise case. The process of identification of this drift term while studying the fluctuations involves several intricate calculations. We identified this type of drift term---for different system classes---in our previous work \cite{dhama2020approximation,dhama2022fluctuation} also where the systems were perturbed \emph{only} by a Brownian noise; see \cite{dhama2020approximation} for the linear dynamics and \cite{dhama2022fluctuation} for a nonlinear case. Thus, this article extends the results of \cite{dhama2022fluctuation} for the case when the deterministic system is a general nonlinear system and noise is a combination of Brownian motion and {\sc prm}. %Therefore, the main part of the innovation of this work is to conduct the fluctuation analysis with the {\sc prm} case.
 Another importance of this work is that we provide an approximation of a non-Markovian process (which is difficult to analyse in its own) by a Markov process. We acknowledge that some techniques in the proofs of the main results of this paper are similar to the article \cite{dhama2022fluctuation}.

There is a vast literature on the asymptotic study for the stochastic equations (driven by continuous or jump noise) with multiple small parameters. Many authors have carried out this analysis in the form of the averaging principle, which gives a simple dynamics of the slowly-varying component by averaging over the quickly-varying quantity. The classical work here dates back to Khasminskii \cite{has1966stochastic,khasminskij1968principle}. Since then a great amount of work has been developed on this topic. More recently, in various regimes depending on the rate of convergence of small parameters, many limit theorems for {\sc sde} driven by Brownian noise (without jumps) have been explored, see, for instance, \cite{AriAnupBor18,AthreyaBorkerSureshSundaresan19,
cerrai2009khasminskii,cerrai2009averaging,
feng2012small,FreidlinSowers,
pardoux2001poisson,pardoux2003poisson,pardoux2005poisson,
rockner2021averaging,rockner2021diffusion,KS2014}. These limit theorems can be interpreted as Law of Large Numbers ({\sc lln}) type result, Central Limit Theorem ({\sc clt}) type result and Large Deviation Principle ({\sc ldp}).

In many other research areas, for instance, in physics, insurance risk models, and kinetic chemical reactions, one encounters many complex stochastic systems with jump noise, for example, \cite{gillespie1976general,kulik2019non,liu2004asymptotic}. For certain optimal control problems with L{\'e}vy noise effect, one can see, e.g. \cite{do2019inverse,muthukumar2017infinite,
oksendal2014stochastic}. The averaging principle for stochastic models driven by Poisson random measures has been studied extensively by many authors, see, for example, \cite{givon2007strong,guo2018stochastic,
kulik2019non,mao2015averaging,xu2011averaging}, whereas for stochastic partial differential equations, we refer to \cite{debussche2013dynamics,pei2017two,
peszat2007stochastic,xu2017lp,xu2015strong}.
% The fluctuation analysis for the general L{\'e}vy process and stochastic integrals with respect to L{\'e}vy processes has been explored in \cite{kyprianou2014fluctuations,gine1983central}.
The interaction of {\sc ldp} with stochastic processes having jumps can be found in the recent articles \cite{budhiraja2013large,budhiraja2016moderate,
ado2022large,chen2017well,de1994large,de2000general,
rockner2007stochastic}. Finally, for an exit problem and metastable behavior of {\sc sde} driven by L{\'e}vy noise, see, e.g. \cite{imkeller2006first,imkeller2008metastable}.

We organize this paper as follows. In Section \ref{S:PS-MainResult-Levy}, we formulate our problem of interest described by {\sc sde} \eqref{E:sde} and state our main results, viz., Theorems \ref{T:LLN}, \ref{T:fluctuations-R-1-2-Levy} and \ref{T:Reg3-Ch4}. The proof of Theorem \ref{T:LLN} is given in Section \ref{SS:LLN-Proof} whereas the details of the proof of Theorem \ref{T:fluctuations-R-1-2-Levy} are spread out over Sections \ref{S:FCLT-Levy} and \ref{S:Main-Term-App-Levy}. Theorem \ref{T:fluctuations-R-1-2-Levy} is, indeed, proved in Section \ref{SS:Proof-CLT-Levy} via a couple of auxiliary results (Propositions \ref{P:Main-Term-App-Levy} through \ref{P:Poisson-Noise-Est})  whose proofs depend on a series of Lemmas \ref{L:M-terms-levy} through \ref{L:M4-Est-PoissonNoise}. Section \ref{S:Main-Term-App-Levy} is devoted to the proofs of these lemmas. At the end of this manuscript, in Section \ref{S:Conclusion-Levy}, we give some concluding remarks and future directions for the research.

\subsection*{Notation and conventions}
This section lists some of the notations used and conventions followed throughout this paper. The symbol $\triangleq$ is read ``is defined to equal.'' For $x\in \BR^n,$ $|x|\triangleq \sum_{i=1}^{n}|x_i|$ represents the one-norm on Euclidean space $\BR^n$, and for $M \in \BR^{n \times m}$, $|M|$ represents the induced matrix norm. The set $E$ will denote open unit ball centered and punctured at origin, i.e., 
$E\triangleq \{x\in \BR^n: 0<|x|<1\}$. The maximum and minimum of two real numbers $a$ and $b$ are written as $a \vee b$ and $a \wedge b$, respectively. We denote Borel $\sigma$-field on $\BR^n$ by $\mathscr{B}(\BR^n)$. For a given probability space $(\Omega, \mathscr{F}, \BP),$ the expectation operator $\BE[\cdot]$ represents the integration with respect to the probability measure $\BP$. For any $t\in [0,\infty)$ and $\delta>0,$ we define $\pi_\delta: [0,\infty) \to \delta \BZ^+$ by $\pi_\delta(t)\triangleq \delta \lfloor t/\delta \rfloor$. Here, the time-discretization operator $\pi_\delta(\cdot)$ rounds down the continuous time $t\in [0,\infty)$ to the nearest multiple of $\delta.$ Throughout this manuscript, $D_1c(x,y)\in \BR^{n \times n}$ and $D_2c(x,y)\in \BR^{n \times n}$ represent the Jacobian matrices of the mapping $c$ with respect to the variables $x$ and $y,$ respectively. We will be using the notation $``\lesssim"$ very frequently. For $a,b \in \BR,$ we write $a \lesssim b$ if there exists a positive constant $C$ such that $a \le C b;$ here, the value of constant $C$ may change from line to line and it will be allowed to depend on various parameters except only the small parameters $\eps$ and $\delta$.
% matrices $A,B,K, \sigma$, vector $F$, initial condition $y_0$ and parameters $m,n$.
However, in the statements of theorems, propositions and lemmas, the final constant $C$ will be denoted with subscript and depending on $T$ (e.g., for Theorem \ref{T:LLN}, it will be $C_{\ref{T:LLN}}(T)$). At many places in this paper, we will be using the following inequality: for $p>0,~n \in \BN$ and $x_1,\cdots,x_n\in \BR,$
\begin{equation}\label{E:Triangle-type-ineq-Levy}
(|x_1|+\cdots+|x_n|)^p \lesssim |x_1|^p+\cdots+|x_n|^p.
\end{equation}

A simple c\`{a}dl\`{a}g \footnote{ A function $f:[a,b]\to \BR$ is said to be c\`{a}dl\`{a}g on $[a,b]$ if, for all $s\in (a,b)$, $f$ is right-continuous and a left limit exists at $s$. For more details about c\`{a}dl\`{a}g functions, we refer to \cite{ConvProbMeas,EK86}.} functions property (mentioned below) will be frequently used in the whole manuscript: for $a,b\in \BR$ and a c\`{a}dl\`{a}g function $f:[a,b]\to \BR$, we have \cite[Section 2.9]{applebaum2009levy}
\begin{equation}\label{E:cadlag-property}
\sup_{a< x \le b}|f(x-)|\le \sup_{a\le x \le b}|f(x)|.
\end{equation}

\subsection*{Preliminaries}
In this section, we present some basic definitions and results from \cite{applebaum2009levy}, which will be quite useful in the next sections. A few more references relevant to this section are \cite{ken1999levy,protter2005stochastic,
ikeda2014stochastic,kunita2004stochastic}.

Let $(\Omega,\mathscr{F}, \BP)$ be a probability space on which a L{\'e}vy process $U=\{U_t: t\ge 0\}$ taking values in $\BR^n,\thinspace n\in \BN$ is defined.
% For any $t\ge 0,$ we define the jumps of $U$ as follows: $\Delta U_t \triangleq U_t-U_{t-}.$
For any $t \ge 0$ and $M \in \mathscr{B}(\BR^n-\{0\})$, we define the process $N(t,M)\triangleq \sum_{0 \le s \le t} \ind_M (\Delta U_s),$ where $\Delta U_t \triangleq U_t-U_{t-}.$ We see from here that for any fixed $t \ge 0 $ and $\omega \in \Omega,$ $N(t,\cdot)(\omega)$ is a counting measure on $\mathscr{B}(\BR^n-\{0\}).$ The measure defined by $\nu(\cdot)\triangleq \BE[N(1,\cdot)]$ is said to be the intensity measure associated with the L{\'e}vy process $U$. Indeed, measure $\nu$ satisfies  
$\int_{\BR^n-\{0\}}(|x|^2 \wedge 1)\nu(dx)< \infty$ and is known as the L{\'e}vy measure. A set $M\in \mathscr{B}(\BR^n-\{0\})$ is said to be bounded below if $0$ is not an element of $\overline{M},$ where $\overline{M}$ denotes the closure of $M.$ For each $t\ge 0$ and a bounded below set $M$, $N(t,M)$ is a Poisson random measure ({\sc prm}).
% with intensity measure  $t\nu(\cdot).$
For more details about {\sc prm}, one can see \cite[Section 2.3.1]{applebaum2009levy}. We define the compensated Poisson random measure as follows:
\begin{equation*}
\widetilde{N}(t,M)\triangleq N(t,M)-t\nu(M).
\end{equation*} 

For any fixed $T>0,$ let $\mathcal{H}_2(T,E)$ denotes the collection of all functions $f: [0,T]\times E \times \Omega \to \BR$ which are predictable and $\int_0^T\int_E \BE(|f(t,x)|^2)dt \thinspace \nu(dx) < \infty.$ Under these conditions, the integral $\int_0^T\int_E f(t,x)\widetilde{N}(dt,dx)$ is well-defined, see, \cite[Section 4.2]{applebaum2009levy} for more details. For integral \\ $\int_0^T\int_E f(t,x)\widetilde{N}(dt,dx)$, the identity
 $$\BE\left[\left|\int_0^T\int_E f(t,x)\widetilde{N}(dt,dx)\right|^2\right]= \BE\left[\int_0^T \int_E|f(t,x)|^2dt \thinspace \nu(dx) \right]$$
 is called It\^o isometry. We will be using this equation very frequently in our calculations to handle the stochastic integral with respect to {\sc prm}.

%In the stochstic calculus of L{\'e}vy processes, one need to make sense of integral $\int_0^T\int_E f(t,x)\widetilde{N}(dt,dx)$ for some appropriate integrands $f.$ We recall here one sufficient condtion to define this integral, for more details see \cite[Section 4.2]{applebaum2009levy}. 
%%For fix $T>0,$ let $\mathcal{H}_2(T,E)$ denotes the collection of all functions $f: [0,T]\times E \times \Omega \to \BR$ which are predictable and $\int_0^T\int_E \BE(|f(t,x)|^2)dt \nu(dx) < \infty.$ For the definition of integral $\int_0^T\int_E f(t,x)\widetilde{N}(dt,dx),~f\in \mathcal{H}_2(T,E)$, one can see \cite[Section 4.2]{applebaum2009levy}. 
%Let $N(dt,dx)$ be a Poisson random measure ({\sc prm}) on $\BR^+ \times (\BR^n-\{0\})$ and $\widetilde{N}(dt,dx)=N(dt,dx)-dt \nu(dx)$ be its compensator with $\nu(E)<\infty$. The intensity measure $\nu$ of {\sc prm} $N$ is a $\sigma$-finite L{\'e}vy measure which is defined by $\nu(\cdot)=\BE[N(1, \cdot)]$ with $N(t, \cdot)=N([0,t], \cdot)$. 

%Let $W$ be an $n$-dimensional Brownian motion defined on a probability space $(\Omega,\mathscr{F},\BP)$ and independent to 
%the {\sc prm} $N$ defined on $\BR^+ \times (\BR^n-\{0\})$.\\
%Definition of c{\`a}dl{\`a}g function.
%\begin{definition} A Borel set $M$ of $\BR^n-\{0\}$ is said to be bounded below if $0$ is not an element of $\overline{M},$ where $\overline{M}$ denotes the closure of $M.$
%\end{definition}

\section{Problem Statement and Main Results}\label{S:PS-MainResult-Levy}
%Fix positive integers $m$ and $n$ and consider the constant matrices $A\in \BR^{n \times n}\thinspace, B\in \BR^{n \times m}$ and $K \in \BR^{m \times n}.$
In many control problems, specifically, when a system is controlled by a digital computer, one considers a general nonlinear hybrid equation\footnote{One obtains this equation by using the sample-and-hold implementation of control law $u=\kappa(y)$ \cite{Hes_LST} in the differential equation $\dot{y}_t=c'(y_t,u)$ with initial vector $y_0\in \BR^n.$ Here, the state of the system is measured at times $k\delta,~k\in \BZ^+.$ The dynamics of this system in the sampling interval $[k\delta,(k+1)\delta)$ is given by $\dot{y}_t^\delta=c({y}_t^\delta, {y}_{k \delta}^\delta)$, with ${y}_{k \delta-}^\delta={y}_{k \delta}^\delta$ and $y^\delta_{0-} \triangleq y^\delta_{0}=y_0.$ For more details about control systems, we refer to \cite{oksendal2005stochastic,Zabczyk-MCT}.}  
%\begin{equation}\label{E:ie-sampling}
%\dot{y}^{\delta}_t =  Ay_t^{\delta} - BK y_{\pi_\delta(t)}^{\delta}, \quad y_0^\delta= y_0 \in \BR^n,
%\end{equation}
\begin{equation}\label{E:ie-sampling}
\dot{y}^{\delta}_t =  c\left(y_t^{\delta}, y_{\pi_\delta(t)}^{\delta}\right), \quad y_0^\delta= y_0 \in \BR^n,
\end{equation}
where, ${y}^{\delta}_t: [0,T] \to \BR^n$ represents the state of the system.
% and the matrix $K$ is chosen to satisfy certain criteria (for example, asymptotic stability of the system, minimization of a cost functional).
Here, the measurements of state $y^\delta$ are taken every $\delta$ time units and the instantaneous value of $\dot{y}^\delta_t$ depends not only on $y_t^\delta$ but also on the most recent sample of $y^\delta,$ viz., $y_{\pi_\delta(t)}^{\delta}.$ One can easily see that the first component (i.e., $y_t^\delta$) on the right side of equation \eqref{E:ie-sampling} changes continuously with time; in contrast to that, the second one (i.e., $ y_{\pi_\delta(t)}^{\delta}$) changes only at the sampling time instants. This interaction of dynamics at continuous and discrete-time instants leads to a hybrid dynamical system. As $\delta \searrow 0$, one expects that the dynamics of $y^{\delta}_t$ converges to the behavior of $y_t$ solving the {\sc ode} 
%\begin{equation}\label{E:closed-system}
%\dot{y}_t = c(y_t,y_t), 
%\end{equation}
\begin{equation}\label{E:closed-system}
\dot{y}_t = c(y_t,y_t), 
\end{equation}
with initial condition $y_0\in \BR^n.$ This is, indeed, shown in Proposition \ref{P:LLN-Det-Approx-Levy}.

We would now like to explore the situation when the dynamics of system \eqref{E:ie-sampling} is perturbed by a small noise which is a combination of Brownian motion and {\sc prm}. Throughout this paper, we will be assuming that the Brownian motion $W$ and {\sc prm} $N$ are independent and are defined on the same probability space $(\Omega,\mathscr{F}, \BP)$. More precisely, we study the asymptotic behavior of the stochastic process $Y^{\eps,\delta}_t$ solving {\sc sde}
\begin{equation}\label{E:sde}
dY^{\eps,\delta}_t = c\left(Y^{\eps,\delta}_{t-}, Y^{\eps,\delta}_{{\pi_\delta(t)-}}\right)dt + \eps \sigma(Y^{\eps,\delta}_{t-}) dW_t\\ 
+ \eps  \int_{E}F(Y^{\eps,\delta}_{t-},x)\widetilde{N}(dt,dx), \quad Y^{\eps,\delta}_0=y_0,
\end{equation}
%\begin{multline}\label{E:sde}
%dY^{\eps,\delta}_t = \left[AY^{\eps,\delta}_{t-} -BK Y^{\eps,\delta}_{{\pi_\delta(t)-}}\right]dt + \eps \sigma(Y^{\eps,\delta}_{t-}) dW_t \\
%+ \eps  \int_{E}F(Y^{\eps,\delta}_{t-},x)\widetilde{N}(dt,dx), \quad Y^{\eps,\delta}_0=y_0,
%\end{multline}
in the limit as $\eps,\delta \searrow 0$. Recall that the parameters $\eps$ and $\delta$ correspond to the size of noise and time between the samples of the state. The mappings $c: \BR^n \times \BR^n \to \BR^n,\thinspace \sigma: \BR^n \to \BR^{n \times n}$ and $F: \BR^n \times \BR^n \to \BR^n$ are assumed to be measurable and further conditions are given in Assumptions \ref{A:Lip-continuity-Levy} and \ref{A:Linear-growth-condition-Levy} below. The equation \eqref{E:sde} is, of course, shorthand of the integral equation 
%\begin{multline*}
$$Y^{\eps,\delta}_t = y_0 + \int_0^t c\left(Y^{\eps,\delta}_{s-}, Y^{\eps,\delta}_{{\pi_\delta(s)-}}\right)ds + \eps \int_0^t \sigma(Y^{\eps,\delta}_{s-}) dW_s 
+ \eps \int_0^t \int_{E}F(Y^{\eps,\delta}_{s-},x)\widetilde{N}(ds,dx).$$
%\end{multline*}
%\begin{multline*}
%Y^{\eps,\delta}_t = y_0 + \int_0^t \left[AY^{\eps,\delta}_{s-} -BK Y^{\eps,\delta}_{{\pi_\delta(s)-}}\right]ds + \eps \int_0^t \sigma(Y^{\eps,\delta}_{s-}) dW_s \\
%+ \eps \int_0^t \int_{E}F(Y^{\eps,\delta}_{s-},x)\widetilde{N}(ds,dx).
%\end{multline*}
%Of course, there are certain regularity conditions on functions $F$ and $\sigma$ to ensure existence and uniqueness of the (c{\`a}dl{\`a}g) solution of {\sc sde} \eqref{E:sde}. These conditions are mentioned in Assumptions \ref{A:Lip-continuity-Levy} and \ref{A:Linear-growth-condition-Levy} below. 
The following conditions are imposed on the functions $c,$ $\sigma$ and $F$.
%(\color{red} We may add a remark mentioning the nonlinearity of the SDE (5) which is due to the functions $\sigma$ and $F$. One might comment on the choice of this equation like it is the base case to understand the more general drift term. \color{black})
\begin{assumption}[Lipschitz continuity]\label{A:Lip-continuity-Levy}
There exists a positive constant $\tilde{C}$ such that for any $x_1, x_2, z_1, z_2 \in \BR^n,$ we have
\begin{equation*}
\begin{aligned}
|c(x_1,x_2)-c(z_1,z_2)|^2 & \le \tilde{C}(|x_1-z_1|^2+|x_2-z_2|^2),\\
|\sigma(x_1)-\sigma(x_2)|^2 + \int_{E}|F(x_1,x)-F(x_2,x)|^2 \thinspace \nu(dx) &\le \tilde{C} |x_1-x_2|^2.
\end{aligned}
\end{equation*}
%There exists a positive constant $\tilde{C}$ such that for any $x_1, x_2 \in \BR^n,$ we have
%\begin{equation*}
%|\sigma(x_1)-\sigma(x_2)|^2 + \int_{E}|F(x_1,x)-F(x_2,x)|^2 \thinspace \nu(dx) \le \tilde{C} |x_1-x_2|^2.
%\end{equation*}
\end{assumption}

\begin{assumption}[Linear growth condition]\label{A:Linear-growth-condition-Levy}
There exists a positive constant $\tilde{C}$ such that for any $z \in \BR^n,$ we have
\begin{equation*}
|\sigma(z)|^4 + \int_{E}|F(z,x)|^4 \thinspace \nu(dx)\le \tilde{C}(1+|z|^4).
\end{equation*}
\end{assumption}

\begin{assumption}[Boundedness/linear growth of derivatives]\label{A:Derivative}
For the vectors $x=(x_1,\cdots, x_n),\thinspace y=(y_1,\cdots,y_n)\in \BR^n$ and $c:\BR^n \times \BR^n \to \BR^n,$ we have
\begin{equation*}
\begin{aligned}
\left|\frac{\partial c_i}{\partial x_k}(x,y)\right|&\le C(1+|y|),\quad \left|\frac{\partial c_i}{\partial y_k}(x,y)\right| \le C, \quad i,j,k \in \{1, \cdots, n\}, \\
 \left|\frac{\partial^2 c_i}{\partial x_k \partial y_j}(x,y)\right|&\le C,\quad \left|\frac{\partial^2 c_i}{\partial x_k \partial x_j}(x,y)\right| \le C(1+|y|), \quad and \quad \left|\frac{\partial^2 c_i}{\partial y_k \partial y_j}(x,y)\right|\le C.
 \end{aligned}
\end{equation*}
\end{assumption}

\begin{remark}
Throughout this work, Assumption \ref{A:Derivative} is important and is of our particular interest as it allows one to cover the typical control system $\dot{y}_t = f(y_t)+g(y_t)\kappa(y_{\pi_\delta(t)})$ (studied in \cite{dhama2022fluctuation}) which is simply a particular case of model \eqref{E:closed-system} with $c(x,y)=f(x)+g(x)\kappa(y); f:\BR^n \to \BR^n, g:\BR^n \to \BR^{n \times m}, \kappa:\BR^n \to \BR^m $ are some regular maps. It is worth mentioning that Assumption \ref{A:Derivative} covers the conditions imposed on the mappings $f,g, \kappa$ (where $f,\kappa$ grow linearly and $g$ is bounded) in \cite{dhama2022fluctuation}. 
%The above assumptions (in particular, Assumption \ref{A:Derivative}) cover the typical control systems with the particular choice of drift $c(x,y)=f(x)+g(x)\kappa(y),$ under certain regularity assumptions on the functions $f,g$ and $\kappa$. More importantly, Assumption \ref{A:Derivative} covers the case when the control term $\kappa(y)$ grows linearly.   
\end{remark} 

%\color{blue} 
\begin{remark}[Comment on the existence and uniqueness result]
We can easily show that under Assumptions \ref{A:Lip-continuity-Levy} and \ref{A:Linear-growth-condition-Levy} and following the similar calculations presented in \cite[Theorem 6.2.3]{applebaum2009levy}, which uses \textit{Picard iteration} technique in the proof, equation \eqref{E:sde} has a unique strong solution.
\end{remark}
%\color{black}

We are now ready to state our first main result.

\begin{theorem}(Law of Large Numbers Type Result)\label{T:LLN}
Let $y_t$ and $Y_t^{\varepsilon,\delta}$ be the solutions of equations \eqref{E:closed-system} and \eqref{E:sde}, respectively.
Then, for any fixed $T>0$, and $p \in \{2,4\},$ there exists a positive constant $C_{\ref{T:LLN}}(T)$ such that 
for any $\eps,\delta>0$
\begin{equation*}
\BE\left[\sup_{0\le t \le T}|Y_t^{\eps,\delta}-y_t|^p\right]\le 
(\eps^p + \delta^p)C_{\ref{T:LLN}}(T).
\end{equation*}
\end{theorem}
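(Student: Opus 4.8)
The plan is to estimate $Y^{\eps,\delta}_t - y_t$ directly using Gr\"onwall's inequality after splitting the difference into a deterministic sampling error and the stochastic noise contributions. First I would introduce the auxiliary quantity $\Delta_t \triangleq Y^{\eps,\delta}_t - y_t$ and write its integral representation by subtracting \eqref{E:closed-system} (in integral form) from the integral form of \eqref{E:sde}. The drift part decomposes as
\begin{equation*}
A\Delta_{s-} + (A-BK)y_{s} - (A-BK)y_{s} \text{-type terms},
\end{equation*}
more precisely one rewrites $AY^{\eps,\delta}_{s-} - BK Y^{\eps,\delta}_{\pi_\delta(s)-} - (A-BK)y_s = (A-BK)\Delta_{s-} + BK(\Delta_{s-} - \Delta_{\pi_\delta(s)-}) + BK(y_{s} - y_{\pi_\delta(s)})$. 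The last term is the pure sampling error; since $y$ is $C^1$ with $\dot y$ bounded on $[0,T]$ (it solves a linear ODE), $|y_s - y_{\pi_\delta(s)}| \le \|\dot y\|_\infty (s - \pi_\delta(s)) \le \|\dot y\|_\infty\, \delta$, contributing an $O(\delta)$ term, hence $O(\delta^2)$ after squaring and integrating.

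Next I would handle the two noise integrals. For the Brownian part $\eps\int_0^t \sigma(Y^{\eps,\delta}_{s-})\,dW_s$, apply Doob's maximal inequality and the It\^o isometry together with the linear growth Assumption \ref{A:Linear-growth-condition-Levy} to bound $\BE\big[\sup_{r\le t}|\eps\int_0^r \sigma\,dW_s|^2\big] \lesssim \eps^2 \int_0^t \BE[1 + |Y^{\eps,\delta}_{s-}|^2]\,ds$. For the {\sc prm} part $\eps\int_0^t\int_E F(Y^{\eps,\delta}_{s-},x)\widetilde N(ds,dx)$, use the analogous maximal inequality for c\`adl\`ag martingales plus the It\^o isometry for compensated Poisson integrals stated in the Preliminaries, again invoking Assumption \ref{A:Linear-growth-condition-Levy} to get $\lesssim \eps^2\int_0^t \BE[1+|Y^{\eps,\delta}_{s-}|^2]\,ds$. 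A preliminary a priori bound $\sup_{\eps,\delta}\sup_{0\le t\le T}\BE|Y^{\eps,\delta}_t|^2 \le C(T)$ is needed here; this follows by the same estimates applied to $Y^{\eps,\delta}$ itself (against the zero solution, say) and Gr\"onwall — I would either cite Proposition \ref{P:LLN-Det-Approx-Levy}-type reasoning or dispatch it in one line. Using \eqref{E:Triangle-type-ineq-Levy} to separate the squared sum of all contributions, one arrives at
\begin{equation*}
\BE\Big[\sup_{0\le r\le t}|\Delta_r|^2\Big] \le C_1(\eps^2+\delta^2) + C_2\int_0^t \BE\Big[\sup_{0\le u \le s}|\Delta_u|^2\Big]\,ds,
\end{equation*}
where the $\Delta_{s-}-\Delta_{\pi_\delta(s)-}$ term is absorbed into $\sup_{0\le u\le s}|\Delta_u|^2$ using \eqref{E:cadlag-property}, and the a priori $L^2$ bound converts the growth terms into the $\eps^2$ coefficient. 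Gr\"onwall's lemma then yields $\BE[\sup_{0\le t\le T}|\Delta_t|^2] \le (\eps^2+\delta^2)C_{\ref{T:LLN}}(T)$.

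The step I expect to be most delicate is the bookkeeping around the sampled term $Y^{\eps,\delta}_{\pi_\delta(s)-}$: one must carefully pass from $|\Delta_{s-} - \Delta_{\pi_\delta(s)-}|$ to something controllable by $\sup_{u\le s}|\Delta_u|^2$ without losing a factor that would spoil the rate, and simultaneously ensure the sampling error from $y$ is genuinely $O(\delta)$ and not merely $O(\sqrt\delta)$ — this requires the $C^1$-regularity (boundedness of $\dot y$ on $[0,T]$) of the limit ODE, which I would record explicitly. Everything else is routine application of the isometries, Doob's inequality, the Lipschitz/linear-growth assumptions, and Gr\"onwall; the only mild subtlety is that the maximal inequality for the jump integral requires its c\`adl\`ag martingale structure, which is standard.
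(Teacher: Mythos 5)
Your proposal is correct, but it takes a different route from the paper. The paper does not run a single Gr\"onwall argument on $\Delta_t = Y^{\eps,\delta}_t - y_t$; instead it inserts the deterministic \emph{sampled} trajectory $y^\delta_t$ solving \eqref{E:ie-sampling} and splits $Y^{\eps,\delta}_t - y_t = (Y^{\eps,\delta}_t - y^\delta_t) + (y^\delta_t - y_t)$, proving two separate statements: Proposition \ref{P:LLN-Stoc-App-Levy} gives $\BE[\sup_t|Y^{\eps,\delta}_t-y^\delta_t|^2]\le \eps^2 C(T)$ (here the two processes share the same sampling structure, so the drift difference involves only $Y^{\eps,\delta}-y^\delta$ evaluated at $s-$ and at $\pi_\delta(s)-$, both absorbed into the running supremum via \eqref{E:cadlag-property}, and no sampling increment of $y$ ever appears), while Proposition \ref{P:LLN-Det-Approx-Levy} handles the purely deterministic comparison $\sup_t|y^\delta_t-y_t|^2\le \delta^2 C(T)$, imported from \cite{dhama2022fluctuation}. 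Your single-pass decomposition $A\Delta_{s-}-BK\Delta_{\pi_\delta(s)-}+BK(y_s-y_{\pi_\delta(s)})$ is algebraically sound and carries the $O(\delta)$ increment $y_s-y_{\pi_\delta(s)}$ (your Lemma-\ref{L:Sampling-Difference-Levy}-type bound, which indeed needs only that $\dot y$ is bounded on $[0,T]$) through the same Gr\"onwall step that controls the noise; this is more self-contained and avoids introducing $y^\delta$ at all. What the paper's factorization buys is modularity --- the deterministic sampling analysis is noise-free and reusable, and each error source ($\eps^2$ versus $\delta^2$) is isolated in its own proposition --- whereas your version gets the same rate in one estimate at the cost of slightly heavier bookkeeping inside the Gr\"onwall integrand. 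The remaining ingredients (Doob's maximal inequality for both the Brownian and the compensated Poisson integrals, the It\^o isometries, the linear-growth assumption, and the a priori bound $\BE[\sup_{t\le T}|Y^{\eps,\delta}_t|^2]\le C(T)$, which is exactly the paper's Lemma \ref{L:L2-Est-Levy}) are used identically in both arguments, so I see no gap.
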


This result can be interpreted as a {\sc lln} type result and shows that the dynamics of the process $Y_t^{\eps,\delta}$ is close to the behavior of deterministic system described by $y_t$ on time interval $[0,T],$ for any fixed $T>0$ and the error is of order $\mathscr{O}(\eps \vee \delta).$

Next, we explore the fluctuation analysis of the process $Y_t^{\eps,\delta}$ around its mean $y_t.$ This enables us to write the process $Y_t^{\eps,\delta}$ as a first order perturbation expansion in powers of the small parameter together with estimates on remainder. Interestingly, the fluctuation study of process $Y_t^{\eps,\delta}$ is found to vary depending on the relative rates of convergence of small parameters $\eps$ and $\delta$ to zero. 

To make things precise, we assume $\delta=\delta_{\eps}$ and 
$\ell \triangleq \lim_{\eps \searrow 0}{\delta_{\eps}}/\eps$ exists in $[0,\infty].$ Following the articles \cite{FreidlinSowers,KS2014}, we consider the following three different regimes\footnote{In \cite{FreidlinSowers}, the parameter $\delta$ corresponds to the homogenization.} :
\begin{equation}\label{E:cc-Levy}
\ell \triangleq \lim_{\eps \searrow 0}\delta_\eps/\eps \begin{cases}=0 & \text{Regime 1,}\\ \in (0,\infty) & \text{Regime 2,}\\ = \infty & \text{Regime 3.}\end{cases}
\end{equation}
In Regimes 1 and 2, we define the rescaled fluctuation process
\begin{equation}\label{E:fluct-processes-Levy}
%\begin{aligned}
Z^{\eps,\delta}_t \triangleq \frac{Y^{\eps,\delta}_t - y_t}{\eps}. 
%\qquad \text{for Regimes 1 and 2.}
%\end{aligned}
\end{equation}
Here, we note that the coarser parameter $\eps$ is used to rescale the stochastic quantity $(Y_t^{\eps,\delta}-y_t).$ To get more insight into the rescaled process $Z_t^{\eps,\delta},$ we recall equations \eqref{E:closed-system} and \eqref{E:sde} to get
\begin{equation*}
Z^{\eps,\delta}_t=(1/\eps)\int_0^t \left\{c\left(Y^{\eps,\delta}_{s-}, Y^{\eps,\delta}_{{\pi_\delta(s)-}}\right)-c(y_s,y_s)\right\}\thinspace ds + \int_0^t \sigma(Y^{\eps,\delta}_{s-})dW_s \\
 + \int_0^t\int_{E}F(Y^{\eps,\delta}_{s-},x)\widetilde{N}(ds,dx).
\end{equation*}
We apply Taylor's theorem to get
\begin{multline}\label{E:Central-Limit-Eq-Levy}
Z^{\eps,\delta}_t=\int_0^t \left\{D_1c(y_s,y_s)+D_2c(y_s,y_s)\right\}Z^{\eps,\delta}_{s-}\thinspace ds + \int_0^t D_2c(y_s,y_s)\left(\frac{Y^{\eps,\delta}_{{\pi_\delta(s)-}}-Y_{s-}^{\eps,\delta}}{\eps} \right)ds\\
+ \int_0^t \sigma(Y^{\eps,\delta}_{s-})dW_s  + \int_0^t\int_{E}F(Y^{\eps,\delta}_{s-},x)\widetilde{N}(ds,dx) + {\sf R}_t^{\eps,\delta}, \quad \text{where}
\end{multline}

\begin{multline*}
{\sf R}_t^{\eps,\delta}\triangleq \int_0^t \left[\frac{c\left(Y^{\eps,\delta}_{s-}, Y^{\eps,\delta}_{{\pi_\delta(s)-}}\right)-c(y_s,y_s)}{\eps} - D_1c(y_s,y_s)Z^{\eps,\delta}_{s-} - D_2c(y_s,y_s)Z^{\eps,\delta}_{s-} \right. \\
\left. -D_2c(y_s,y_s)\left(\frac{Y^{\eps,\delta}_{{\pi_\delta(s)-}}-Y_{s-}^{\eps,\delta}}{\eps} \right)\right]ds. 
\end{multline*}
The above equation can be written in the following way as the remainder term will be shown small later in an appropriate way.

\begin{multline*}
Z^{\eps,\delta}_t=\int_0^t \left\{D_1c(y_s,y_s)+D_2c(y_s,y_s)\right\}Z^{\eps,\delta}_{s-}\thinspace ds + \int_0^t D_2c(y_s,y_s)\left(\frac{Y^{\eps,\delta}_{{\pi_\delta(s)-}}-Y_{s-}^{\eps,\delta}}{\eps} \right)ds\\
+ \int_0^t \sigma(Y^{\eps,\delta}_{s-})dW_s  + \int_0^t\int_{E}F(Y^{\eps,\delta}_{s-},x)\widetilde{N}(ds,dx) + \mathcal{O}(\eps),
\end{multline*}

%\begin{multline}\label{E:Central-Limit-Eq-Levy}
%Z^{\eps,\delta}_t = (A-BK)\int_0^t Z^{\eps,\delta}_{s-}ds + BK \int_0^t \frac{Y^{\eps,\delta}_{s-}-Y^{\eps,\delta}_{\pi_{\delta}(s)-}}{\eps}ds + \int_0^t \sigma(Y^{\eps,\delta}_{s-})dW_s \\ + \int_0^t\int_{E}F(Y^{\eps,\delta}_{s-},x)\widetilde{N}(ds,dx).
%\end{multline} 

We now aim to characterize the limiting process (denoted by $Z_t$), as $\eps,\delta$ tend to zero, of the fluctuation process $Z_t^{\eps,\delta}$ satisfying {\sc sde} \eqref{E:Central-Limit-Eq-Levy}. A bit informally, the limiting {\sc sde} \eqref{E:lim-fluct-R-1-2-Levy} below, is obtained from equation \eqref{E:Central-Limit-Eq-Levy} by replacing $Z_t^{\eps,\delta}$ by $Z_t, $ ${\eps^{-1}}\int_0^t D_2c(y_s,y_s)\cdot(Y_{s-}^{\varepsilon, \delta}-Y_{\pi_\delta(s)-}^{\varepsilon, \delta})\thinspace ds$ by $\dd(t)\footnote{The identification of $\dd(t)$ is given in equation \eqref{E:ell-Levy}.}$, and $Y_{t-}^{\eps,\delta}$ by $y_t$ in the stochastic integrals $\int_0^t \sigma(Y^{\eps,\delta}_{s-})dW_s,$ and $\int_0^t\int_{E}F(Y^{\eps,\delta}_{s-},x)\widetilde{N}(ds,dx).$ Precise estimates for the terms ${\eps^{-1}}\int_0^tD_2c(y_s,y_s)\cdot (Y_{s-}^{\varepsilon, \delta}-Y_{\pi_\delta(s)-}^{\varepsilon, \delta})\thinspace ds-\dd(t), ~\int_0^t \{\sigma(Y^{\eps,\delta}_{s-})-\sigma(y_s)\}dW_s$ and \\ $\int_0^t\int_{E}\{F(Y^{\eps,\delta}_{s-},x)-F(y_s,x)\}\widetilde{N}(ds,dx)$ are obtained in Propositions \ref{P:Main-Term-App-Levy} through \ref{P:Poisson-Noise-Est} in Section \ref{S:FCLT-Levy}.

%For that, if we consider the existence of limit $\ell(t) \triangleq \lim_{\eps,\delta \searrow 0}{\eps^{-1}}\int_0^t (Y_{s-}^{\varepsilon, \delta}-Y_{\pi_\delta(s)-}^{\varepsilon, \delta})\thinspace ds$, then this characterization is accomplished by showing, formally, the convergence of \\
%$\int_0^t \sigma(Y^{\eps,\delta}_{s-})dW_s$ and $\int_0^t\int_{E}F(Y^{\eps,\delta}_{s-},x)\widetilde{N}(ds,dx)$ to $\int_0^t \sigma(y_s) dW_s$ and $\int_0^t\int_{E}F(y_s,x)\widetilde{N}(ds,dx)$ respectively, in a suitable path-wise sense.

Before stating our second main result, we fix some notations. Recall $\delta=\delta(\eps)$ and in Regimes 1 and 2, $\lim_{\eps \searrow 0}\delta_\eps/\eps = \ell \in [0,\infty)$. Consequently, there exists $\eps_0 \in (0,1)$ such that $\left|\frac{\delta_{\eps}}{\eps}-\ell\right|<1, \text{whenever}, 0<\eps<\eps_0.$ In particular, for $0<\eps<\eps_0$, we have 
\begin{equation}\label{E:eps0}
\delta_{\eps}<(\ell+1)\eps.
\end{equation}

%\begin{equation}\label{E:eps0}
%\begin{aligned}
%\left|\frac{\delta_{\eps}}{\eps}-\cc\right|<1 \quad \text{whenever} \quad 0<\eps<\eps_0. \quad \text{In particular, for $0<\eps<\eps_0$, we have 
%$\delta_{\eps}<(\cc+1)\eps$.}
%\end{aligned}
%\end{equation}
For the cases $\ell=0$, $\ell \in (0,\infty)$, set
\begin{equation}\label{E:kappa-Levy}
\kap(\eps) \triangleq \left|\frac{\delta}{\eps}-\ell\right|.
\end{equation}
Of course, $\lim_{\eps \searrow 0} \kap(\eps)=0$. 

We now state our second main result which can be interpreted as a {\sc clt} type result.

\begin{theorem}(Central Limit Theorem Type Result)\label{T:fluctuations-R-1-2-Levy}
Let $y_t$ and $Y_t^{\varepsilon,\delta}$ solve \eqref{E:closed-system} and \eqref{E:sde}, respectively. Suppose that we are in Regime $i \in \{1,2\}$, i.e., $\lim_{\eps \searrow 0}\delta_\eps/\eps = \ell \in [0,\infty)$. %Set $Z^{\eps,\delta}_t \triangleq \frac{X^{\eps,\delta}_t - x(t)}{\eps}$, and 
Let $Z=\{Z_t: t \ge 0\}$ be the unique solution of 
\begin{multline}\label{E:lim-fluct-R-1-2-Levy}
Z_t= \int_0^t\{D_1c(y_s,y_s)+D_2c(y_s,y_s)\} Z_{s-} \thinspace ds +\frac{\ell}{2} \int_0^t D_2c(y_s,y_s)\cdot c(y_s,y_s) \thinspace ds \\ + \int_0^t \sigma(y_s) dW_s
+\int_0^t\int_{E}F(y_s,x)\widetilde{N}(ds,dx).
\end{multline}
Then, for any fixed $T \in (0,\infty)$, there exists a positive constant $C_{\ref{T:fluctuations-R-1-2-Levy}}(T)$ such that for $0<\eps<\eps_0$, we have
%Then, there exists $\eps_0\in (0,1)$ such that for any $T>0$ and $0<\eps<\eps_0$, we have
\begin{equation}\label{E:FCLT}
\BE\left[\sup_{0 \le t \le T} |Z^{\eps,\delta}_t - Z_t|^2\right] = 
\frac{1}{\eps^2}\BE\left[\sup_{0 \le t \le T} |Y^{\eps,\delta}_t - y_t - \eps Z_t|^2\right]  \le  (\ell+1)^2\left[\eps^2+{\delta}+\varkappa^2(\eps)\right]C_{\ref{T:fluctuations-R-1-2-Levy}}(T),
%\left(\delta(\cc+1)+\frac{1}{2}\varkappa(\eps)T\right)\left(1+\sup_{0\le t \le T}|x(t)|\right) \\+ (\cc+1)(2\delta+\sqrt{n \delta}+(\eps\sqrt{n}+\delta)T)TK_{\ref{T:fluctuations-R-1-2}}e^{K_{\ref{T:fluctuations-R-1-2}}T}\left(1+\sup_{0\le t \le T}|x(t)|\right)^2,
\end{equation}
where $\eps_0$ and $\kap(\eps) \searrow 0$ are as in equations \eqref{E:eps0} and \eqref{E:kappa-Levy}, respectively.
\end{theorem}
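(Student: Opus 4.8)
The plan is to estimate the error process $R^{\eps,\delta}_t \triangleq Z^{\eps,\delta}_t - Z_t$ directly by subtracting the integral form of \eqref{E:lim-fluct-R-1-2-Levy} from \eqref{E:Central-Limit-Eq-Levy} and applying a Gr\"onwall-type argument. Writing out the difference, the linear term contributes $(A-BK)\int_0^t R^{\eps,\delta}_{s-}\thinspace ds$, which is exactly what feeds Gr\"onwall. The remaining terms split into three groups, each of which I would control by a dedicated auxiliary estimate (matching Propositions \ref{P:Main-Term-App-Levy}, \ref{P:White-Noise-Term-Est}, \ref{P:Poisson-Noise-Est}): (i) the sampling term $BK\big[\tfrac1\eps\int_0^t (Y^{\eps,\delta}_{s-}-Y^{\eps,\delta}_{\pi_\delta(s)-})\thinspace ds - \tfrac{\cc}{2}\int_0^t(A-BK)y_s\thinspace ds\big]$; (ii) the Brownian term $\int_0^t\{\sigma(Y^{\eps,\delta}_{s-})-\sigma(y_s)\}dW_s$; and (iii) the Poisson term $\int_0^t\int_E\{F(Y^{\eps,\delta}_{s-},x)-F(y_s,x)\}\widetilde N(ds,dx)$. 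After taking $\sup_{0\le t\le T}$ and $\BE[\cdot]$, using \eqref{E:Triangle-type-ineq-Levy} to separate the pieces, I would arrive at an inequality of the form $\BE[\sup_{s\le t}|R^{\eps,\delta}_s|^2] \lesssim (\cc+1)^2[\eps^2+\delta+\kap^2(\eps)]\, + \int_0^t \BE[\sup_{u\le s}|R^{\eps,\delta}_u|^2]\thinspace ds$ and close with Gr\"onwall's lemma.

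\textbf{Handling the three error groups.} For the stochastic integrals (ii) and (iii), I would use Doob's maximal inequality followed by the It\^o isometry (in the compensated-Poisson case, the isometry quoted in the Preliminaries), which reduces matters to $\int_0^T\BE|\sigma(Y^{\eps,\delta}_{s-})-\sigma(y_s)|^2 ds$ and $\int_0^T\int_E\BE|F(Y^{\eps,\delta}_{s-},x)-F(y_s,x)|^2\thinspace \nu(dx)\thinspace ds$; Assumption \ref{A:Lip-continuity-Levy} bounds both by $\int_0^T\BE|Y^{\eps,\delta}_{s-}-y_s|^2 ds$, and Theorem \ref{T:LLN} shows this is $\mathscr{O}(\eps^2+\delta^2)$. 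However, to get the \emph{sharp} constant one must not discard $\eps Z_s$: I would instead write $Y^{\eps,\delta}_{s-}-y_s = \eps Z^{\eps,\delta}_{s-} = \eps(R^{\eps,\delta}_{s-}+Z_{s-})$, so these two groups actually contribute a term $\lesssim \eps^2\int_0^T\BE|R^{\eps,\delta}_{s-}|^2 ds$ (which is absorbed into the left side or Gr\"onwall for $\eps$ small) plus $\eps^2\int_0^T\BE|Z_{s-}|^2 ds$, the latter being $\mathscr{O}(\eps^2)$ since $Z$ has finite second moments uniformly on $[0,T]$ — this is where the $\eps^2$ in the bound \eqref{E:FCLT} comes from. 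Using \eqref{E:cadlag-property} lets me pass freely between $Y^{\eps,\delta}_{s-}$ and $Y^{\eps,\delta}_s$ under the sup.

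\textbf{The main obstacle: the sampling term.} The crux is group (i), the identification of $\ell(t)=\tfrac{\cc}{2}\int_0^t(A-BK)y_s\thinspace ds$ as the limit of $\tfrac1\eps\int_0^t(Y^{\eps,\delta}_{s-}-Y^{\eps,\delta}_{\pi_\delta(s)-})\thinspace ds$. On each sampling interval $[k\delta,(k+1)\delta)$ one has $Y^{\eps,\delta}_{s}-Y^{\eps,\delta}_{\pi_\delta(s)} = \int_{\pi_\delta(s)}^{s}[AY^{\eps,\delta}_{r-}-BKY^{\eps,\delta}_{\pi_\delta(r)-}]dr + (\text{noise of size }\eps)$, and to leading order the drift on a short interval of length $\le\delta$ is $\approx (s-\pi_\delta(s))(A-BK)y_s$; integrating $s-\pi_\delta(s)$ over a full period gives the factor $\delta^2/2$, and after dividing by $\eps$ and using $\delta/\eps\to\cc$ one recovers $\tfrac{\cc}{2}(A-BK)y_s$. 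Making this rigorous requires: replacing $Y^{\eps,\delta}$ by $y$ inside the inner integral at cost controlled by Theorem \ref{T:LLN}; handling the $\pi_\delta(r)$ vs.\ $r$ discrepancy and the regularity of $y$ (Lipschitz, from \eqref{E:closed-system}); estimating the accumulated noise contribution $\tfrac1\eps\int_0^t(\text{martingale increments over }[\pi_\delta(s),s])ds$, whose $L^2$ norm is $\mathscr{O}(\delta)$ by It\^o isometry and the $\sqrt\delta$-size of each increment; and quantifying the error $|\delta/\eps-\cc|=\kap(\eps)$, which produces the $\kap^2(\eps)$ term. The bookkeeping of these multiple small-parameter contributions — summed over the $\mathscr{O}(T/\delta)$ sampling intervals — is the delicate part, and is precisely what Proposition \ref{P:Main-Term-App-Levy} and Lemmas \ref{L:M-terms-levy}--\ref{L:M4-Est-PoissonNoise} are designed to carry out; the factor $(\cc+1)^2$ in \eqref{E:FCLT} arises from repeatedly bounding $\delta_\eps<(\cc+1)\eps$ via \eqref{E:eps0}. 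Finally, uniqueness of $Z$ solving \eqref{E:lim-fluct-R-1-2-Levy} is standard since the equation is linear in $Z$ with a bounded coefficient and an inhomogeneous part that is a fixed semimartingale with finite second moments.
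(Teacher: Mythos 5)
Your proposal follows essentially the same route as the paper: the same decomposition of $Z^{\eps,\delta}_t-Z_t$ into the Gr\"onwall-compatible linear term plus the three error groups handled by Propositions \ref{P:Main-Term-App-Levy}--\ref{P:Poisson-Noise-Est}, the same Doob/It\^o-isometry/Lipschitz treatment of the martingale differences, and the same identification of $\ell(t)$ via the $\delta^2/2$ factor, the splitting into the ${\sf M}_i^{\eps,\delta}$ terms, and the $\kap(\eps)$ bookkeeping. The only deviation is your insistence that one \emph{must} keep $\eps Z_{s}$ by writing $Y^{\eps,\delta}_{s-}-y_{s-}=\eps(R^{\eps,\delta}_{s-}+Z_{s-})$ inside the stochastic integrals; this is valid but unnecessary, since in Regimes 1 and 2 the crude bound of Theorem \ref{T:LLN} already yields $\eps^2+\delta^2\lesssim(\cc+1)^2\eps^2$, which is exactly how the paper's Propositions \ref{P:White-Noise-Term-Est} and \ref{P:Poisson-Noise-Est} proceed.
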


\begin{remark}[Interpretation]
The interpretation of Theorem \ref{T:fluctuations-R-1-2-Levy} is twofold. First, the paths of process $Z_t^{\eps,\delta}$ converge to the paths of process $Z_t$ over time interval $[0,T],$ for any fixed $T>0$ and the error is of order $\mathscr{O}(\sqrt{\eps} \thinspace \vee \varkappa(\eps)).$ Second, the expression $\BE\left[\sup_{0 \le t \le T} |Y^{\eps,\delta}_t - y_t - \eps Z_t|^2\right]$ converges to zero faster than $\eps^2$ does. This implies $\BE\left[\sup_{0 \le t \le T} |Y^{\eps,\delta}_t - y_t - \eps Z_t|^2\right]=o(\eps^2);$ thus informally, 
%  This expression gives a precise sense of the first order perturbation expansion of $Y_t^{\eps,\delta}$, i.e., 
$Y_t^{\eps,\delta}=y_t+ \eps Z_t+ o(\eps).$ Here, we not only characterize the coefficient of $\eps$, viz., the process $Z_t$ in equation \eqref{E:lim-fluct-R-1-2-Levy} but, also give error estimates on the remainder.
\end{remark}

%\color{blue} 
\begin{remark}[Solution representation]\label{R:solu-reg-2}
Being a linear {\sc sde} in $Z_t$ variable (and an Ornstein-Uhlenbeck type equation), the equation \eqref{E:lim-fluct-R-1-2-Levy} can be solved explicitly for the process $Z_t$. Indeed, a simple application of It\^{o}'s formula \cite[Theorem 4.4.7]{applebaum2009levy} to the function $\varphi(t,x)=x \thinspace e^{-\int_0^t \sum_{i=1}^2 D_i c(y_s, y_s)\thinspace ds}$ gives  
\begin{multline*}
Z_t= \frac{\ell}{2} \int_0^t e^{\int_s^t \sum_{i=1}^2 D_i c(y_u, y_u)\thinspace du}D_2c(y_s,y_s)\cdot c(y_s,y_s) \thinspace ds \\
 + \int_0^t e^{\int_s^t \sum_{i=1}^2 D_i c(y_u, y_u)\thinspace du} \sigma(y_s)\thinspace dW_s
+\int_0^t\int_{E}e^{\int_s^t \sum_{i=1}^2 D_i c(y_u, y_u)\thinspace du}F(y_s,x)\widetilde{N}(ds,dx).
\end{multline*}
Further, in a particular case when $c(y,y)=(A-BK)y,$ where, $A\in \BR^{n \times n}\thinspace, B\in \BR^{n \times m}$, $K \in \BR^{m \times n}$ and if the matrices $A$ and $BK$ commute, i.e., $A(BK)=(BK)A,$ then the above equation can be simplified using the solution representation $y_s=e^{(A-BK)s}y_0$ for the deterministic system $\dot{y}_s = (A-BK)y_s$ to yield
\begin{equation*}
Z_t= \frac{\ell}{2}BKt(A-BK)y_t + \int_0^t e^{(A-BK)(t-s)} \sigma(y_s)\thinspace dW_s
+\int_0^t\int_{E}e^{(A-BK)(t-s)}F(y_s,x)\widetilde{N}(ds,dx).
\end{equation*} 
\end{remark}
%\color{black}

\begin{remark}[A comparison with our previous work]
For the particular choices of $c(y,y)=(A-BK)y$, $\sigma=I$ (where, $I$ represents the $n \times n$ identity matrix and $A,B,K$ are also some matrices of suitable dimensions) and $F=0$ in equation \eqref{E:sde}, this problem turns out to be a special case of the problem studied in \cite{dhama2020approximation}. In contrast to \cite{dhama2020approximation}, due to the nonlinearity of our model, the explicit solution representation of the state is not available here. We studied an another particular case of the present model in \cite{dhama2022fluctuation} where we consider $c(x,y)=f(x)+g(x)\kappa(y)$, and $F=0$ (where, $f,g,\kappa$ are some suitable mappings). In all these works, our key finding is the extra effective drift term; however, due to our model choices, the drift term takes a different form in the above mentioned works. Hence, the part of innovation of the present work is to conduct the asymptotic analysis and identifying the drift term for a general nonlinear sampled system with state-dependent Poisson random measures case.

%However, since the dynamics of the deterministic systems in both the cases are linear, the extra drift terms take the same form i.e., $BK ({\cc}/{2}) \int_0^t (A-BK)y_s \thinspace ds$. To make a comparison with \cite{dhama2022fluctuation} which considers the nonlinear dynamics of the system only under the Brownian noise effect, the extra drift terms in these works are different.
%In the proof of Theorems \ref{T:LLN} and \ref{T:fluctuations-R-1-2-Levy}, some techniques are similar to 
\end{remark}
%\color{black}

%For the particular choices of $\sigma=I$ \footnote{$I$ represents the identity matrix of order $n$.} and $F=0,$ Theorems \ref{T:LLN} and \ref{T:fluctuations-R-1-2-Levy} become similar to the results of \cite{dhama2020approximation}. We used the explicit solution representation of the dynamics of interest due to the\\
%we obtain the results of \cite{dhama2020approximation}. A particular case of the results of \cite{dhama2022fluctuation} is obtained for $F=0.$ The part of the innovation here is to obtain the similar results to what we obtained in \cite{dhama2022fluctuation} with Poisson random measures case.

%\begin{remark}[Regime 3]
In case of Regime 3, we notice that $\delta$ is the coarser parameter (see, equation \eqref{E:cc-Levy}). Therefore, rescaling the quantity $(Y_t^{\eps,\delta}-y_t)$ by the parameter $\delta$ and following similar calculations for Regimes 1 and 2, we can get the fluctuation analysis for the process $V_t^{\eps,\delta}\triangleq {\delta}^{-1}(Y_t^{\eps,\delta}-y_t).$ Before stating our main result for Regime 3, we note
\begin{equation}\label{E:R3}
\tilde\kap(\delta)  \triangleq {\eps}/{\delta} \searrow 0 \quad \text{as} \quad \delta \searrow 0,
\end{equation}
there exists $\delta_0\in(0,1)$ such that whenever $0<\delta<\delta_0,$ we have, $\eps <\delta$.

%\begin{equation}\label{E:R3}
%\begin{aligned}
% \text{$\tilde\kap(\delta)  \triangleq {\eps}/{\delta} \searrow 0$ as $\delta \searrow 0$,  there exists $\delta_0\in(0,1)$ such that whenever }
%  \text{$0<\delta<\delta_0,$ we have, $\eps <\delta$.}  
%\end{aligned}
%\end{equation}

\begin{theorem}\label{T:Reg3-Ch4}
Let $y_t$ and $Y_t^{\varepsilon,\delta}$ solve \eqref{E:closed-system} and \eqref{E:sde}, respectively. Suppose that we are in Regime 3, i.e., $\ell=\infty.$ Let $V=\{V_t:t\ge 0\}$ be the unique solution of 
\begin{equation}\label{E:FCLT-R3}
V_t= \int_0^t\{D_1c(y_s,y_s)+D_2c(y_s,y_s)\} V_{s} \thinspace ds +\frac{1}{2} \int_0^t D_2c(y_s,y_s)\cdot c(y_s,y_s) \thinspace ds.
\end{equation}
Then, for any fixed $T>0,$ there exists a positive constant $C_{\ref{T:Reg3-Ch4}}(T)$ such that for $0<\delta<\delta_0,$ we have
\begin{equation*}
\BE\left[\sup_{0 \le t \le T} |V^{\eps,\delta}_t - V_t|^2\right] = 
\frac{1}{\delta^2}\BE\left[\sup_{0 \le t \le T} |Y^{\eps,\delta}_t - y_t - \delta V_t|^2\right] \le  \left[{\delta}+\tilde{\varkappa}^2(\delta)\right]C_{\ref{T:Reg3-Ch4}}(T),
%\left(\delta(\cc+1)+\frac{1}{2}\varkappa(\eps)T\right)\left(1+\sup_{0\le t \le T}|x(t)|\right) \\+ (\cc+1)(2\delta+\sqrt{n \delta}+(\eps\sqrt{n}+\delta)T)TK_{\ref{T:fluctuations-R-1-2}}e^{K_{\ref{T:fluctuations-R-1-2}}T}\left(1+\sup_{0\le t \le T}|x(t)|\right)^2,
\end{equation*}
where $\delta_0$ and $\tilde{\varkappa}(\delta) \searrow 0$ are as in equation \eqref{E:R3}.
\end{theorem}

%\begin{theorem}\label{T:Reg3-Ch4}
%Let $y_t$ and $Y_t^{\varepsilon,\delta}$ solve \eqref{E:closed-system} and \eqref{E:sde}, respectively. Suppose that we are in Regime 3, i.e., $\cc=\infty.$ Let $V=\{V_t:t\ge 0\}$ be the unique solution of 
%\begin{equation}\label{E:FCLT-R3}
%V_t= (A-BK)\int_0^t V_{s} \thinspace ds - \frac{1}{2}BK \int_0^t (A-BK)y_s \thinspace ds.
%\end{equation}
%Then, for any fixed $T>0,$ there exists a positive constant $C_{\ref{T:Reg3-Ch4}}(T)$ such that for $0<\delta<\delta_0,$ we have
%\begin{equation*}
%\BE\left[\sup_{0 \le t \le T} |V^{\eps,\delta}_t - V_t|^2\right] = 
%\frac{1}{\delta^2}\BE\left[\sup_{0 \le t \le T} |Y^{\eps,\delta}_t - y_t - \delta V_t|^2\right] \le  \left[{\delta}+\tilde{\varkappa}^2(\delta)\right]C_{\ref{T:Reg3-Ch4}}(T),
%%\left(\delta(\cc+1)+\frac{1}{2}\varkappa(\eps)T\right)\left(1+\sup_{0\le t \le T}|x(t)|\right) \\+ (\cc+1)(2\delta+\sqrt{n \delta}+(\eps\sqrt{n}+\delta)T)TK_{\ref{T:fluctuations-R-1-2}}e^{K_{\ref{T:fluctuations-R-1-2}}T}\left(1+\sup_{0\le t \le T}|x(t)|\right)^2,
%\end{equation*}
%where $\delta_0$ and $\tilde{\varkappa}(\delta) \searrow 0$ are as in equation \eqref{E:R3}.
%\end{theorem}
We also note here that equation \eqref{E:FCLT-R3} is \emph{deterministic} as noise parameter $\eps$ vanishes faster than the sampling parameter $\delta$ does. For the sake of brevity, we avoid the calculations for Regime 3.
%\color{blue} 
\begin{remark}[Solution representation]
By the method of variation of parameters, the limiting integral equation \eqref{E:FCLT-R3} can be solved easily for $V_t$ to give $V_t=({1}/{2}) \int_0^t e^{\int_s^t \sum_{i=1}^2 D_i c(y_u, y_u)\thinspace du}D_2c(y_s,y_s)\cdot c(y_s,y_s) \thinspace ds$. This expression can further be simplified for a particular case as we did in Remark \ref{R:solu-reg-2}.
\end{remark}
%\color{black}

\section{Limiting mean behavior}\label{S:LLN}

In this section, we prove our first main result (Theorem \ref{T:LLN}) through Propositions \ref{P:LLN-Stoc-App-Levy} and \ref{P:LLN-Det-Approx-Levy} stated below. Proposition \ref{P:LLN-Stoc-App-Levy} deals with the stochastic term $\sup_{0\le t \le T}|Y_t^{\eps,\delta}-y_t^\delta|^p, \thinspace p \in \{2,4\}$ in a path-wise sense whereas Proposition \ref{P:LLN-Det-Approx-Levy} gives an estimate for the deterministic quantity $\sup_{0\le t \le T} |y_t^\delta-y_t|^p, \thinspace p \in \BN$. It is shown in these propositions that the terms of interest are of order  $\mathscr{O}(\eps \vee \delta).$ 

\subsection{Proof of Theorem \ref{T:LLN}}\label{SS:LLN-Proof}
We start by stating Propositions \ref{P:LLN-Stoc-App-Levy} and \ref{P:LLN-Det-Approx-Levy} which are the principal components in the proof of Theorem \ref{T:LLN}.
\begin{proposition}\label{P:LLN-Stoc-App-Levy}
Let $y_t^\delta$ and $Y_t^{\eps,\delta}$ be the solutions of \eqref{E:ie-sampling} and \eqref{E:sde}, respectively. Then, for any fixed $T>0$ and $p \in \{2,4\}$, there exists a positive constant $C_{\ref{P:LLN-Stoc-App-Levy}}(T)$ 
%depending on $f,g,\kappa,\sigma,T,p$ and $x_0$ 
such that for any $\eps, \delta>0$, we have
\begin{equation*}
\BE\left[ \sup_{0\le t \le T}|Y_t^{\eps,\delta}-y_t^\delta|^p \right] \le \eps^p C_{\ref{P:LLN-Stoc-App-Levy}}(T).
\end{equation*}
\end{proposition}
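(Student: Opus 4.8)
The plan is to estimate $D_t \triangleq Y_t^{\eps,\delta} - y_t^\delta$ directly via a Gronwall argument on $\BE[\sup_{0 \le s \le t}|D_s|^2]$. Subtracting \eqref{E:ie-sampling} from \eqref{E:sde} (using that $y^\delta$ is continuous so $y^\delta_{s-} = y^\delta_s$) gives
\begin{equation*}
D_t = \int_0^t \left[A D_{s-} - BK D_{\pi_\delta(s)-}\right] ds + \eps \int_0^t \sigma(Y^{\eps,\delta}_{s-}) dW_s + \eps \int_0^t \int_E F(Y^{\eps,\delta}_{s-},x)\widetilde{N}(ds,dx).
\end{equation*}
First I would apply the elementary inequality \eqref{E:Triangle-type-ineq-Levy} with $p=2$ to split $\sup_{0 \le r \le t}|D_r|^2$ into three contributions: the drift integral, the Brownian stochastic integral, and the {\sc prm} stochastic integral. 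For the drift term, bound $|A D_{s-}| \le |A|\,|D_{s-}| \le |A|\sup_{0 \le r \le s}|D_r|$ and, crucially, $|D_{\pi_\delta(s)-}| \le \sup_{0 \le r \le s}|D_r|$ (here $\pi_\delta(s) \le s$, and one uses \eqref{E:cadlag-property} to pass to the left limit); then Cauchy--Schwarz in the time integral yields a bound of the form $C\,T \int_0^t \BE[\sup_{0 \le r \le s}|D_r|^2]\,ds$.

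Next I would handle the two stochastic integrals using the Burkholder--Davis--Gundy inequality (for the Brownian part) and the Ito isometry together with BDG / Doob's maximal inequality (for the compensated-{\sc prm} part). After taking $\sup$ over $[0,t]$ and then expectations, these produce terms bounded by
\begin{equation*}
\eps^2 \BE\left[\int_0^t |\sigma(Y^{\eps,\delta}_{s-})|^2\,ds\right] \quad \text{and} \quad \eps^2\,\BE\left[\int_0^t \int_E |F(Y^{\eps,\delta}_{s-},x)|^2\,ds\,\nu(dx)\right].
\end{equation*}
By the linear growth condition (Assumption \ref{A:Linear-growth-condition-Levy}), both are dominated by $\eps^2 \tilde C \int_0^t (1 + \BE[|Y^{\eps,\delta}_{s-}|^2])\,ds$. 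To close the loop I need an a priori bound $\sup_{0 \le s \le T}\BE[|Y^{\eps,\delta}_s|^2] \le C(T)$ uniformly in $\eps, \delta$ (for $\eps, \delta$ in a bounded range, which suffices); this is a standard moment estimate for SDEs with Poisson noise under linear growth, obtained by the same BDG/Ito-isometry/Gronwall machinery applied to $Y^{\eps,\delta}$ itself, and I would either cite it or prove it as a preliminary lemma. Alternatively, since $y^\delta$ is bounded on $[0,T]$ by $C(T)$ (a deterministic fact, cf. the reasoning behind Proposition \ref{P:LLN-Det-Approx-Levy}), I can write $|Y^{\eps,\delta}_{s-}|^2 \lesssim |D_{s-}|^2 + |y^\delta_s|^2$ and absorb the $|D_{s-}|^2$ part into the Gronwall integral, leaving only a $\eps^2 C(T)$ constant term.

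Combining, I arrive at
\begin{equation*}
\BE\left[\sup_{0 \le r \le t}|D_r|^2\right] \le \eps^2 C(T) + C(T)\int_0^t \BE\left[\sup_{0 \le r \le s}|D_r|^2\right] ds,
\end{equation*}
and Gronwall's inequality gives $\BE[\sup_{0 \le t \le T}|D_t|^2] \le \eps^2 C(T) e^{C(T)T} \triangleq \eps^2 C_{\ref{P:LLN-Stoc-App-Levy}}(T)$, as desired. I expect the main obstacle to be the a priori second-moment bound on $Y^{\eps,\delta}$ uniform in the parameters (needed to control the linear-growth terms); the $\pi_\delta$-delay in the drift is harmless because $\pi_\delta(s) \le s$ makes it dominated by the running supremum, and the c\`adl\`ag/left-limit bookkeeping via \eqref{E:cadlag-property} is routine. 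The constant $C_{\ref{P:LLN-Stoc-App-Levy}}(T)$ depends on $|A|$, $|BK|$, $\tilde C$, the BDG constants, $|y_0|$ and $T$, but not on $\eps$ or $\delta$.
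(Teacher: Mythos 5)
Your proposal is correct and follows essentially the same route as the paper: the same decomposition of $Y_t^{\eps,\delta}-y_t^\delta$ via the integral equations, H\"older plus the running supremum and \eqref{E:cadlag-property} for the drift, Doob/It\^o isometry with the linear growth condition for the two stochastic integrals, and Gronwall to close. The a priori second-moment bound you flag as the main obstacle is exactly the paper's Lemma \ref{L:L2-Est-Levy}, proved by the same machinery you describe.
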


The proof of Proposition \ref{P:LLN-Stoc-App-Levy} is deferred to the end of this section.

\begin{proposition}\label{P:LLN-Det-Approx-Levy}
Let $y_t^\delta$ and $y_t$  be the solutions of \eqref{E:ie-sampling} and \eqref{E:closed-system}, respectively. Then, for any fixed $T>0,$ and $p \in \BN$, there exists a positive constant $C_{\ref{P:LLN-Det-Approx-Levy}}(T)$ 
%depending on $f,g,\kappa,T,p$ and $x_0$ 
such that for any $\eps,\delta>0$, we have 
\begin{equation*}
\sup_{0\le t \le T} |y_t^\delta-y_t|^p \le \delta^p C_{\ref{P:LLN-Det-Approx-Levy}}(T).
\end{equation*}
\end{proposition}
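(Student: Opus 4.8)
\textbf{Proof proposal for Proposition \ref{P:LLN-Det-Approx-Levy}.}

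The plan is to estimate the difference $e_t \triangleq y_t^\delta - y_t$ directly from the integral forms of \eqref{E:ie-sampling} and \eqref{E:closed-system}, using a Gr\"onwall argument after controlling the error introduced by replacing the sampled value $y_{\pi_\delta(s)}^\delta$ by the current value $y_s^\delta$. First I would write, for $t \in [0,T]$,
\begin{equation*}
e_t = \int_0^t \left[A y_s^\delta - BK y_{\pi_\delta(s)}^\delta\right] ds - \int_0^t (A-BK) y_s \, ds = \int_0^t (A-BK) e_s \, ds + BK \int_0^t \left(y_s^\delta - y_{\pi_\delta(s)}^\delta\right) ds,
\end{equation*}
so that $|e_t| \le |A-BK| \int_0^t |e_s|\, ds + |BK| \int_0^t |y_s^\delta - y_{\pi_\delta(s)}^\delta|\, ds$. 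The second term is the sampling error, and the key observation is that on each interval $[k\delta,(k+1)\delta)$ we have $\dot{y}_s^\delta = A y_s^\delta - BK y_{k\delta}^\delta$, which is bounded in norm once we know $\sup_{0\le s \le T}|y_s^\delta|$ is bounded by a constant depending only on $T$ (and not on $\delta$); hence $|y_s^\delta - y_{\pi_\delta(s)}^\delta| \le (s - \pi_\delta(s)) \sup_{u}|\dot{y}_u^\delta| \lesssim \delta$.

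The first step is therefore an a priori bound $\sup_{0 \le t \le T} |y_t^\delta| \le C(T)$ uniform in $\delta$: from \eqref{E:ie-sampling}, $|y_t^\delta| \le |y_0| + (|A| + |BK|) \int_0^t \sup_{0 \le u \le s}|y_u^\delta|\, ds$ (using $|y_{\pi_\delta(s)}^\delta| \le \sup_{0\le u \le s}|y_u^\delta|$), and Gr\"onwall's inequality applied to $s \mapsto \sup_{0 \le u \le s}|y_u^\delta|$ gives the claim. With this in hand, $\sup_{0 \le s \le T}|\dot{y}_s^\delta| \le (|A|+|BK|) C(T) =: C'(T)$, so the sampling error satisfies $|y_s^\delta - y_{\pi_\delta(s)}^\delta| \le \delta\, C'(T)$ for all $s \in [0,T]$. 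Substituting this into the bound for $|e_t|$ yields
\begin{equation*}
|e_t| \le |A-BK| \int_0^t |e_s|\, ds + |BK|\, C'(T)\, T\, \delta,
\end{equation*}
and a final application of Gr\"onwall's inequality gives $\sup_{0 \le t \le T} |e_t| \le \delta\, |BK|\, C'(T)\, T\, e^{|A-BK| T}$. Squaring produces the desired estimate with $C_{\ref{P:LLN-Det-Approx-Levy}}(T) = \left(|BK|\, C'(T)\, T\, e^{|A-BK| T}\right)^2$.

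I do not expect a genuine obstacle here; the only point requiring a little care is the order in which the Gr\"onwall arguments are applied (first to get the uniform-in-$\delta$ bound on $y^\delta$ and its derivative, then to close the loop on $e_t$), and the observation that the piecewise structure of \eqref{E:ie-sampling} makes $\dot{y}^\delta$ genuinely bounded on all of $[0,T]$ despite the discontinuity of $s \mapsto y_{\pi_\delta(s)}^\delta$ at the sampling times. Everything is deterministic, so no stochastic estimates enter; this is the easiest of the auxiliary results.
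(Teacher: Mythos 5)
Your proof is correct and complete. The paper itself does not spell out an argument for this proposition --- it defers to \cite[Proposition 3.2]{dhama2022fluctuation} --- but your two-stage Gr\"onwall argument (uniform-in-$\delta$ bound on $y^\delta$, hence on $\dot y^\delta$, hence the $\mathscr{O}(\delta)$ sampling error, then Gr\"onwall on $e_t$) is exactly the standard route that reference takes, and it correctly supplies the omitted details, including the observation that $y^\delta$ is continuous and piecewise $C^1$ so the Lipschitz bound on $s \mapsto y^\delta_s$ survives the jumps of $\dot y^\delta$ at sampling times.
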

\begin{proof}[Proof of Proposition \ref{P:LLN-Det-Approx-Levy}]
From the integral representations of $y_t$ and $y_t^\delta,$ we have
\begin{equation*}
|y_t^\delta-y_t|^p \le \left(\int_0^t \left|c(y_s^\delta,y_{\pi_\delta(s)}^\delta)-c(y_s,y_s) \right|ds \right)^p.
\end{equation*}
Now, a simple application of the Lipschitz continuity of $c$, H$\ddot{\text{o}}$lder's inequality, Lemma \ref{L:Sampling-Difference-Levy} and finally Gronwall's inequality yields the required result. 
\end{proof}

%\begin{proposition}\label{P:LLN-Stoc-App-Levy}
%Let $y_t^\delta$ and $Y_t^{\eps,\delta}$ be the solutions of \eqref{E:ie-sampling} and \eqref{E:sde}, respectively. Then, for any fixed $T>0$, there exists a positive constant $C_{\ref{P:LLN-Stoc-App-Levy}}(T)$ 
%%depending on $f,g,\kappa,\sigma,T,p$ and $x_0$ 
%such that for any $\eps, \delta>0$, we have
%\begin{equation*}
%\BE\left[ \sup_{0\le t \le T}|Y_t^{\eps,\delta}-y_t^\delta|^2 \right] \le \eps^2 C_{\ref{P:LLN-Stoc-App-Levy}}(T).
%\end{equation*}
%\end{proposition}
%\begin{proposition}\label{P:LLN-Det-Approx-Levy}
%Let $y_t^\delta$ and $y_t$  be the solutions of \eqref{E:ie-sampling} and \eqref{E:closed-system}, respectively. Then, for any fixed $T>0,$ there exists a positive constant $C_{\ref{P:LLN-Det-Approx-Levy}}(T)$ 
%%depending on $f,g,\kappa,T,p$ and $x_0$ 
%such that for any $\eps,\delta>0$, we have 
%\begin{equation*}
%\sup_{0\le t \le T} |y_t^\delta-y_t|^2 \le \delta^2 C_{\ref{P:LLN-Det-Approx-Levy}}(T).
%\end{equation*}
%\end{proposition}
%The proof of Proposition \ref{P:LLN-Det-Approx-Levy} follows from \cite[Proposition 3.2]{dhama2022fluctuation} with slight modifications. For the sake of brevity, the details of the proof of Proposition \ref{P:LLN-Det-Approx-Levy} are being skipped here.

We now prove Theorem \ref{T:LLN}.

\begin{proof}[Proof of Theorem \ref{T:LLN}] 
For any $p \in \{2,4\},$ writing $Y_t^{\eps,\delta}-y_t$ as $Y_t^{\eps,\delta}-y_t^\delta+ y_t^\delta -y_t$, we get
\begin{equation}\label{E:LLN-ineq-Levy}
\begin{aligned}
|Y_t^{\eps,\delta}-y_t|^p \le \left[|Y_t^{\eps,\delta}-y_t^{\delta}|+|y_t^{\delta}-y_t|\right]^p
\lesssim |Y_t^{\eps,\delta}-y_t^{\delta}|^p + |y_t^{\delta}-y_t|^p,
\end{aligned}
\end{equation}
where the last expression in the above equation is obtained by using the inequality \eqref{E:Triangle-type-ineq-Levy}. Now, taking supremum over $[0,T]$ on both sides of equation \eqref{E:LLN-ineq-Levy}, we have 
\begin{equation*}
\sup_{0\le t \le T}|Y_t^{\eps,\delta}-y_t|^p \lesssim \sup_{0\le t \le T}|Y_t^{\eps,\delta}-y_t^{\delta}|^p + \sup_{0 \le t \le T}|y_t^{\delta}-y_t|^p.
\end{equation*}
The proof is now completed using Propositions \ref{P:LLN-Stoc-App-Levy} and \ref{P:LLN-Det-Approx-Levy}.
\end{proof}

%\begin{proof}[Proof of Theorem \ref{T:LLN}] 
%Writing $Y_t^{\eps,\delta}-y_t$ as $Y_t^{\eps,\delta}-y_t^\delta+ y_t^\delta -y_t$, we get
%\begin{equation}\label{E:LLN-ineq-Levy}
%\begin{aligned}
%|Y_t^{\eps,\delta}-y_t|^2 \le \left[|Y_t^{\eps,\delta}-y_t^{\delta}|+|y_t^{\delta}-y_t|\right]^2
%\lesssim |Y_t^{\eps,\delta}-y_t^{\delta}|^2 + |y_t^{\delta}-y_t|^2,
%\end{aligned}
%\end{equation}
%where the last expression in the above equation is obtained by using the inequality \eqref{E:Triangle-type-ineq-Levy}. Now, taking supremum over $[0,T]$ on both sides of equation \eqref{E:LLN-ineq-Levy}, we have 
%\begin{equation*}
%\sup_{0\le t \le T}|Y_t^{\eps,\delta}-y_t|^2 \lesssim \sup_{0\le t \le T}|Y_t^{\eps,\delta}-y_t^{\delta}|^2 + \sup_{0 \le t \le T}|y_t^{\delta}-y_t|^2.
%\end{equation*}
%The proof is now completed using Propositions \ref{P:LLN-Stoc-App-Levy} and \ref{P:LLN-Det-Approx-Levy}.
%\end{proof}

The proof of Proposition \ref{P:LLN-Stoc-App-Levy} requires estimates for the deterministic quantities  $\sup_{0\le t \le T}|y_t^\delta|^p, \\ 
 \sup_{0\le t \le T}|y_t|^p, |y_t-y_{\pi_{\delta}(t)}|^p$ and stochastic terms
  $$\sup_{0\le s \le T}\left|\int_0^s \sigma(Y^{\eps,\delta}_{r-})dW_r \right|^p, 
 \sup_{0\le s \le T}\left|\int_0^s \int_{E} F(Y^{\eps,\delta}_{r-},x)\widetilde{N}(dr, dx) \right|^p, \thinspace p \in \{2,4\}.$$ Lemmas \ref{L:Linear-Sys-Sampling-Est-Levy} and  \ref{L:Sampling-Difference-Levy} deal with the deterministic quantities and Lemma \ref{L:L2-Est-Levy} handles stochastic terms mentioned above.

\begin{lemma}\label{L:Linear-Sys-Sampling-Est-Levy}
Let $y_t^\delta$ and $y_t$  be the solutions of \eqref{E:ie-sampling} and \eqref{E:closed-system}, respectively. Then, for any fixed $T > 0$, and $p \in \BN,$ there exists a positive constant $C_{\ref{L:Linear-Sys-Sampling-Est-Levy}}(T)$ 
%depending on $f,g,\kappa,T,p$ and $x_0$ 
such that 
\begin{equation*}
\begin{aligned}
\sup_{0\le t \le T}|y_t^\delta|^p \le C_{\ref{L:Linear-Sys-Sampling-Est-Levy}}(T) \qquad \text{and} \qquad
\sup_{0\le t \le T}|y_t|^p \le C_{\ref{L:Linear-Sys-Sampling-Est-Levy}}(T).
\end{aligned}
\end{equation*}
\end{lemma}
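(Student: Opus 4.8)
The statement to prove is Lemma~\ref{L:Linear-Sys-Sampling-Est-Levy}: uniform-in-time bounds on $|y_t^\delta|^2$ and $|y_t|^2$ over $[0,T]$, with a constant independent of $\eps,\delta$.

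\medskip

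\textbf{Plan.} The bound on $y_t$ is the easy half: since $y_t$ solves the linear autonomous {\sc ode} \eqref{E:closed-system}, we have $y_t = e^{(A-BK)t}y_0$, so $|y_t| \le e^{|A-BK|t}|y_0| \le e^{|A-BK|T}|y_0|$ for $t \in [0,T]$, giving the second inequality with $C_{\ref{L:Linear-Sys-Sampling-Est-Levy}}(T)$ depending only on $A,B,K,y_0,T$. Alternatively, and in a way that generalizes to $y^\delta$, one writes the integral form $|y_t| \le |y_0| + |A-BK|\int_0^t |y_s|\,ds$ and applies Gr\"onwall's inequality. The first inequality, for the sampled system $y_t^\delta$ solving \eqref{E:ie-sampling}, is handled the same way: from the integral form,
\begin{equation*}
|y_t^\delta| \le |y_0| + |A|\int_0^t |y_s^\delta|\,ds + |BK|\int_0^t |y_{\pi_\delta(s)}^\delta|\,ds.
\end{equation*}
Here the subtlety is the sampled term $y_{\pi_\delta(s)}^\delta$, which is not $y_s^\delta$; but since $\pi_\delta(s) \le s$, we have $|y_{\pi_\delta(s)}^\delta| \le \sup_{0 \le r \le s}|y_r^\delta|$, so setting $g(t) \triangleq \sup_{0 \le r \le t}|y_r^\delta|$ (which is nondecreasing) one gets
\begin{equation*}
g(t) \le |y_0| + (|A|+|BK|)\int_0^t g(s)\,ds,
\end{equation*}
and Gr\"onwall yields $g(T) \le |y_0|\,e^{(|A|+|BK|)T}$, hence $\sup_{0 \le t \le T}|y_t^\delta|^2 \le |y_0|^2 e^{2(|A|+|BK|)T} =: C_{\ref{L:Linear-Sys-Sampling-Est-Levy}}(T)$, uniformly in $\delta$. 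Taking the larger of the two constants gives a single $C_{\ref{L:Linear-Sys-Sampling-Est-Levy}}(T)$ that works for both estimates.

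\medskip

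\textbf{Main obstacle.} There is essentially no serious obstacle here; the only point requiring a moment's care is that the argument of Gr\"onwall's lemma must be the running supremum $g(t)$ rather than $|y_t^\delta|$ itself, because the delay/sampling term $y_{\pi_\delta(s)}^\delta$ prevents a direct pointwise comparison $|y_t^\delta| \le |y_0| + C\int_0^t|y_s^\delta|\,ds$. Passing to $g$ (and using monotonicity of $g$ together with $\pi_\delta(s)\le s$) circumvents this cleanly and makes the bound manifestly independent of $\delta$. Since the excerpt itself says this lemma is identical to \cite[Lemma~\ref{L:Linear-Sys-Sampling-Est-Levy}]{dhama2022fluctuation}, I would present only this short Gr\"onwall argument and not belabor the routine constants.
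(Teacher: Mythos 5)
Your argument is correct. Note that the paper itself does not prove this lemma: it states that the proof is identical to the corresponding lemma in \cite{dhama2022fluctuation} and omits it, so there is no in-paper proof to compare against; your running-supremum Gr\"onwall argument (pass to $g(t)=\sup_{0\le r\le t}|y_r^\delta|$ so that the sampled term $y^\delta_{\pi_\delta(s)}$ is dominated, then apply Gr\"onwall to the nondecreasing, continuous $g$) is exactly the standard route for such sampled-data systems and yields a bound manifestly uniform in $\delta$. The only point worth making explicit is that $y^\delta$ is continuous on $[0,T]$, so $g$ is finite and continuous and Gr\"onwall's inequality applies; with that remark the proof is complete.
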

\begin{proof}[Proof of Lemma \ref{L:Linear-Sys-Sampling-Est-Levy}]
The proof follows from the integral representations of $y_t$ and $y_t^\delta$ followed by H$\ddot{\text{o}}$lder's and Gronwall's inequalities.
%Let us first prove the boundedness of $\sup_{0\le t \le T}|y_t|^p$. For this, we recall the integral representation of $y_t$ to get
%%\begin{equation*}
%$|y_t|^p \le |y_0|^p+ |\int_0^tc(y_s,y_s)\thinspace ds |^p.$
%%\end{equation*}
%Using H$\ddot{\text{o}}$lder's inequality followed by Gronwall's inequality, we get the desired estimate. Using the similar arguments, one can obtain the bound for $\sup_{0\le t \le T}|y_t^\delta|^p.$
\end{proof}

\begin{lemma}\label{L:Sampling-Difference-Levy}
Let $y_t$ be the solution of \eqref{E:closed-system}. Then, for any fixed $T > 0,$ and $p\in \BN,$ there exists a positive constant $C_{\ref{L:Sampling-Difference-Levy}}(T)$ 
%depending on $f,g,\kappa, T, p$ and $x_0$ 
such that 
%\begin{equation*}
$|y_t-y_{\pi_{\delta}(t)}|^p \le \delta^p C_{\ref{L:Sampling-Difference-Levy}}(T), \thinspace t\in[0,T].$
%\end{equation*}
\end{lemma}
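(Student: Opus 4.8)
The plan is to use that $y$ solves the \emph{linear} ODE \eqref{E:closed-system}, hence is continuously differentiable on $[0,T]$ with a time-derivative controlled by the a priori bound already recorded in Lemma \ref{L:Linear-Sys-Sampling-Est-Levy}; the estimate then drops out of a one-line application of the fundamental theorem of calculus.

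First I would recall that, by the definition of the time-discretization operator, $\pi_\delta(t)=\delta\lfloor t/\delta\rfloor$ satisfies $0\le t-\pi_\delta(t)<\delta$ for every $t\ge 0$, and $\pi_\delta(t)\in[0,T]$ whenever $t\in[0,T]$. Integrating \eqref{E:closed-system} over $[\pi_\delta(t),t]$ gives
\[
y_t-y_{\pi_\delta(t)}=\int_{\pi_\delta(t)}^{t}(A-BK)\,y_s\,ds,
\]
so that, using consistency (sub-multiplicativity) of the induced matrix norm together with $t-\pi_\delta(t)<\delta$,
\[
\bigl|y_t-y_{\pi_\delta(t)}\bigr|\;\le\;|A-BK|\int_{\pi_\delta(t)}^{t}|y_s|\,ds\;\le\;\delta\,|A-BK|\,\sup_{0\le s\le T}|y_s|.
\]

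Next I would square this inequality and invoke Lemma \ref{L:Linear-Sys-Sampling-Est-Levy} to replace $\sup_{0\le s\le T}|y_s|^2$ by $C_{\ref{L:Linear-Sys-Sampling-Est-Levy}}(T)$, obtaining $|y_t-y_{\pi_\delta(t)}|^2\le\delta^2\,|A-BK|^2\,C_{\ref{L:Linear-Sys-Sampling-Est-Levy}}(T)$ uniformly in $t\in[0,T]$; one then simply sets $C_{\ref{L:Sampling-Difference-Levy}}(T)\triangleq|A-BK|^2\,C_{\ref{L:Linear-Sys-Sampling-Est-Levy}}(T)$.

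I do not expect any genuine obstacle here: the only ingredient beyond elementary calculus is the uniform boundedness of $s\mapsto y_s$ on $[0,T]$, which is precisely Lemma \ref{L:Linear-Sys-Sampling-Est-Levy} (itself a consequence of Gr\"onwall's inequality applied to \eqref{E:closed-system}). The identical scheme applied to $y^\delta$ in place of $y$ would yield the analogous bound on $|y_t^\delta-y_{\pi_\delta(t)}^\delta|$, which is why the paper is able to defer to the corresponding lemma of \cite{dhama2022fluctuation}.
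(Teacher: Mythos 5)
Your proof is correct, and it is the standard argument: the paper itself omits the proof of this lemma, deferring to the corresponding result in \cite{dhama2022fluctuation}, whose argument is precisely the one you give (integrate the ODE over $[\pi_\delta(t),t]$, use $t-\pi_\delta(t)<\delta$, and invoke the uniform bound $\sup_{0\le s\le T}|y_s|^2\le C_{\ref{L:Linear-Sys-Sampling-Est-Levy}}(T)$ from Lemma \ref{L:Linear-Sys-Sampling-Est-Levy}). No gaps.
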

\begin{proof}[Proof of Lemma \ref{L:Sampling-Difference-Levy}]
%A simple application of the integral representation of $y_t,$ the fact $t-\pi_{\delta}(t)< \delta$ and Lemma \ref{L:Linear-Sys-Sampling-Est-Levy} proves the result.
Recalling the integral representation of $y_t,$ we have
\begin{equation*}
|y_t-y_{\pi_{\delta}(t)}|^p = \left|\int_{\pi_{\delta}(t)}^t c(y_s,y_s)\thinspace ds \right|^p.
\end{equation*}
Using the linear growth of $c$, the fact $t-\pi_{\delta}(t)< \delta$ and Lemma \ref{L:Linear-Sys-Sampling-Est-Levy}, we obtain the required bound. 
\end{proof}

\begin{lemma}\label{L:L2-Est-Levy}
Let $Y_t^{\eps,\delta}$ be the solution of \eqref{E:sde}. Then, for any fixed $T > 0,$ and $p \in \{2,4\},$ there exists a positive constant $C_{\ref{L:L2-Est-Levy}}(T)$ %depending on L\'{e}vy measure $\nu$ associated with the Poisson random measure $N$
such that
%depending on $f,g,\kappa, \sigma, T, x_0$ and $n$ 
\begin{equation*}\label{E:L2-Est-Levy}
\begin{aligned}
\BE \left[\sup_{0\le s \le T}\left|Y_s^{\eps,\delta}\right|^p \right]  \le C_{\ref{L:L2-Est-Levy}}(T) ,\quad \quad 
\BE \left[\sup_{0\le s \le T}\left|\int_0^s \sigma(Y^{\eps,\delta}_{r-})\thinspace dW_r \right|^p\right] \le C_{\ref{L:L2-Est-Levy}}(T) \quad \text{and}
\end{aligned}
\end{equation*}
\begin{equation*}
\BE \left[\sup_{0\le s \le T}\left|\int_0^s \int_{E} F(Y^{\eps,\delta}_{r-},x)\widetilde{N}(dr, dx) \right|^p\right]  \le C_{\ref{L:L2-Est-Levy}}(T).
\end{equation*}
\end{lemma}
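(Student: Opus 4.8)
The plan is to establish the three bounds in the stated order, since once the first is in hand the other two fall out immediately from the moment estimates used to prove it. Throughout I would work from the integral form of \eqref{E:sde}, and --- consistently with the standing regime $0<\eps,\delta\ll 1$ --- treat $\eps\le 1$, so that the factors of $\eps$ multiplying the two noise terms are absorbed into the constants. Existence and uniqueness of a c\`adl\`ag solution $Y^{\eps,\delta}$ under Assumptions \ref{A:Lip-continuity-Levy}--\ref{A:Linear-growth-condition-Levy} is standard and may be taken for granted.

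\textbf{Step 1 (the $L^2$ bound on $Y^{\eps,\delta}$).} Using \eqref{E:Triangle-type-ineq-Levy} I would split $Y^{\eps,\delta}_t$ into the initial value $y_0$, the drift $\int_0^t[AY^{\eps,\delta}_{s-}-BKY^{\eps,\delta}_{\pi_\delta(s)-}]\,ds$, the Brownian integral, and the compensated {\sc prm} integral, then take $\sup_{0\le t\le u}$ and $\BE$ and estimate each piece. For the drift, since $\pi_\delta(s)\le s$ the c\`adl\`ag inequality \eqref{E:cadlag-property} gives $|AY^{\eps,\delta}_{s-}-BKY^{\eps,\delta}_{\pi_\delta(s)-}|\lesssim\sup_{0\le r\le s}|Y^{\eps,\delta}_r|$, so Cauchy--Schwarz bounds this contribution by $u\int_0^u\BE[\sup_{0\le r\le s}|Y^{\eps,\delta}_r|^2]\,ds$. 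For the two stochastic integrals, Doob's $L^2$ maximal inequality followed by the It\^o isometry for $W$ and the It\^o isometry for $\widetilde N$ quoted in the Preliminaries, and then Assumption \ref{A:Linear-growth-condition-Levy}, bounds each by a constant times $\int_0^u\big(1+\BE[\sup_{0\le r\le s}|Y^{\eps,\delta}_r|^2]\big)\,ds$. Writing $\phi(u)\triangleq\BE[\sup_{0\le t\le u}|Y^{\eps,\delta}_t|^2]$, this yields $\phi(u)\lesssim 1+\int_0^u\phi(s)\,ds$ on $[0,T]$, and Gr\"onwall's inequality gives $\phi(T)\le C_{\ref{L:L2-Est-Levy}}(T)$.

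\textbf{Step 2 (the two noise-integral bounds).} These are precisely the estimates already derived in Step 1 for the Brownian and {\sc prm} integrals; inserting the now-established bound $\BE[\sup_{0\le r\le s}|Y^{\eps,\delta}_r|^2]\le\phi(T)$ into their right-hand sides gives both inequalities with a constant depending only on $T$ and the fixed data.

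\textbf{Main obstacle.} The one genuinely delicate point is that Gr\"onwall's lemma in Step 1 may be applied only after $\phi(u)$ is known to be finite, which is not obvious a priori because the integrands involve $Y^{\eps,\delta}$ itself. I would handle this by the standard localization: stop at $\tau_M\triangleq\inf\{t:|Y^{\eps,\delta}_t|\ge M\}$, run the Step 1 argument for $Y^{\eps,\delta}_{\cdot\wedge\tau_M}$ --- where the linear-growth control on $\sigma,F$ together with $|Y^{\eps,\delta}_{\cdot\wedge\tau_M}|\le M$ makes every quantity finite and keeps the constants independent of $M$ --- obtain the Gr\"onwall bound for $\BE[\sup_{0\le t\le u\wedge\tau_M}|Y^{\eps,\delta}_t|^2]$, and then send $M\to\infty$ via Fatou's lemma using that $\tau_M\uparrow\infty$ a.s. Beyond this, the only routine care needed is the consistent use of the left limits $Y^{\eps,\delta}_{s-}$ and of the sampled value $Y^{\eps,\delta}_{\pi_\delta(s)-}$, both controlled by \eqref{E:cadlag-property}.
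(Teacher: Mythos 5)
Your proposal is correct and follows essentially the same route as the paper: split the integral form of \eqref{E:sde}, control the drift by Cauchy--Schwarz and \eqref{E:cadlag-property}, control the two noise integrals by Doob's maximal inequality, the It\^o isometries and Assumption \ref{A:Linear-growth-condition-Levy}, and close with Gr\"onwall, after which the two noise-integral bounds are immediate. The only difference is that you explicitly justify the a priori finiteness needed for Gr\"onwall via localization, a point the paper passes over in silence; this is a welcome refinement rather than a departure.
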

\begin{proof}[Proof of Lemma \ref{L:L2-Est-Levy}]
Let $|\cdot|$ be the one norm. We prove the lemma for $p=4$; for the sake of brevity, we omit the details of the proof for $p=2$ case. Using equations \eqref{E:Triangle-type-ineq-Levy}, \eqref{E:cadlag-property},  
\eqref{E:sde}, H$\ddot{\text{o}}$lder's inequality followed by taking supremum over $[0,T]$ and then expectation on both sides, we get
%\begin{multline*}
%|Y_t^{\eps,\delta}|^4 \lesssim |y_0|^4 + t^{3} \int_0^t \sup_{0\le r \le s}|Y_r^{\eps,\delta}|^4 \thinspace ds + \eps^4 \left[\sup_{0\le s \le t}\left|\int_0^s\sigma(Y^{\eps,\delta}_{r-})\thinspace dW_r\right|^4 \right]\\
%+ \eps^4 \left[\sup_{0\le s \le t}\left|\int_0^s\int_{E}F(Y^{\eps,\delta}_{r-},x)\widetilde{N}(dr,dx)
%\right|^4 \right].
%\end{multline*}
%Taking supremum over $[0,T]$ and then expectation on both sides, we have
\begin{multline*}
\BE\left[\sup_{0\le s \le T}|Y_s^{\eps,\delta}|^4\right]\lesssim 1+  \int_0^T \BE \left[\sup_{0\le r \le s}|Y_r^{\eps,\delta}|^4 \right] ds + \eps^4 \BE \left[\sup_{0\le s \le T}\left|\int_0^s\sigma(Y^{\eps,\delta}_{r-})\thinspace dW_r\right|^4 \right]\\
+ \eps^4 \BE \left[\sup_{0\le s \le T}\left|\int_0^s\int_{E}F(Y^{\eps,\delta}_{r-},x)\widetilde{N}(dr,dx)
\right|^4 \right].
\end{multline*}
Using Doob's maximal inequality \cite[Theorem 1.3.8(iv)]{KS91}, \cite[Theorem 2.1.5]{applebaum2009levy}, we obtain
\begin{multline}\label{E:Exp-L2}
\BE\left[\sup_{0\le s \le T}|Y_s^{\eps,\delta}|^4\right]\lesssim 1+  \int_0^T \BE \left[\sup_{0\le r \le s}|Y_r^{\eps,\delta}|^4 \right] ds + \eps^4 \BE \left|\int_0^T\sigma(Y^{\eps,\delta}_{s-})\thinspace dW_s\right|^4\\
+ \eps^4 \BE \left[\sup_{0\le s \le T}\left|\int_0^s\int_{E}F(Y^{\eps,\delta}_{r-},x)\widetilde{N}(dr,dx)
\right|^4 \right].
\end{multline}
Since $\left|\int_0^T \sigma(Y_{s-}^{\eps,\delta})dW_s \right|^4 \lesssim  \sum_{i=1}^n\sum_{j=1}^n\left|\int_0^T \sigma_{ji}(Y_{s-}^{\eps,\delta})dW_s^i \right|^4$ for column vectors $\sigma_i\in \BR^n$, $i=1,...,n;$ using martingale moment inequalities followed by linear growth condition on $\sigma$ from Assumption \ref{A:Linear-growth-condition-Levy}, equation \eqref{E:cadlag-property}, we get
%\begin{equation*}
%\begin{aligned}
$\BE\left|\int_0^T \sigma(Y_{s-}^{\eps,\delta})dW_s \right|^4  \lesssim \BE \left( \int_0^T [1+ \sup_{ 0 \le r \le s}|Y_{r}^{\eps,\delta}|^2]ds \right)^2.$ 
%\end{aligned}
%\end{equation*}
%Hence, by H$\ddot{\text{o}}$lder's inequality and equation \eqref{E:cadlag-property},
%\begin{equation*}
%\begin{aligned}
% \int_0^T [1+ \sup_{ 0 \le r \le s}|Y_{r}^{\eps,\delta}|^2]ds  & \lesssim \left(\int_0^T \left[1+ \sup_{ 0 \le r \le s}|Y_{r}^{\eps,\delta}|^2\right]^{\frac{p}{2}}ds  \right)^{\frac{2}{p}} \\
% \BE \left(\int_0^T [1+ \sup_{ 0 \le r \le s}|Y_{r}^{\eps,\delta}|^2]ds \right)^{\frac{p}{2}}  & \lesssim \int_0^T \left[1+ \BE \sup_{ 0 \le r \le s}|Y_{r}^{\eps,\delta}|^p \right]ds. 
%\end{aligned}
%\end{equation*}
Therefore, 
\begin{equation}\label{E:L2-Est-1-Levy}
\BE\left|\int_0^T \sigma(Y_{s-}^{\eps,\delta})dW_s \right|^4 \lesssim  T+ \int_0^T \BE \left[\sup_{0 \le r \le s}|Y_{r}^{\eps,\delta}|^4 \right]ds. 
\end{equation}

We now consider the term 
 $\BE \left[\sup_{0\le s \le T}\left|\int_0^s\int_{E}F(Y^{\eps,\delta}_{r-},x)\widetilde{N}(dr,dx)
\right|^4 \right]$ in equation \eqref{E:Exp-L2}.
%In order to get the bound for the term $\BE \left[\left|\int_0^T\int_{E}F(Y^{\eps,\delta}_{s-},x)\widetilde{N}(ds,dx)
%\right|^2 \right],$ we consider $\nu$ represents the L\'{e}vy measure associated with the Poisson random measure $N$.\\
Since \\  $\left|\int_0^T\int_{E}F(Y^{\eps,\delta}_{s-},x)\widetilde{N}(ds,dx) \right|^4 \lesssim \sum_{i=1}^{n}\left|\int_0^T\int_{E}F_i(Y^{\eps,\delta}_{s-},x)\widetilde{N}(ds,dx)\right|^4,$ employing Kunita's first inequality (moment inequality for general L{\'e}vy-type stochastic integrals)\cite[Theorem 4.4.23]{applebaum2009levy}, linear growth hypothesis on $F$, we have 
\begin{equation*}
\begin{aligned}
\BE \left[\sup_{0\le s \le T}\left|\int_0^s\int_{E}F(Y^{\eps,\delta}_{r-},x)\widetilde{N}(dr,dx)
\right|^4 \right] &\lesssim \sum_{i=1}^{n} \BE \sup_{0\le s \le T}\left|\int_0^s\int_{E}F_i(Y^{\eps,\delta}_{r-},x)\widetilde{N}(dr,dx)
\right|^4 \\
& \lesssim \sum_{i=1}^{n} \BE \left(\int_0^T\int_{E}\left|F_i(Y^{\eps,\delta}_{s-},x)\right|^2 \nu(dx) \thinspace ds \right)^2\\
&\qquad \qquad \thinspace \qquad \qquad+ \sum_{i=1}^{n} \BE \int_0^T\int_{E}\left|F_i(Y^{\eps,\delta}_{s-},x)\right|^4  \nu(dx) \thinspace ds.
\end{aligned}
\end{equation*}
Apply H$\ddot{\text{o}}$lder's inequality with $\nu(E)<\infty$ to get
\begin{equation}\label{E:L2-Est-2-Levy-general}
\begin{aligned}
\BE \left[\sup_{0\le s \le T}\left|\int_0^s\int_{E}F(Y^{\eps,\delta}_{r-},x)\widetilde{N}(dr,dx)
\right|^4 \right] &\lesssim \sum_{i=1}^{n}\BE \int_0^T\int_{E}\left|F_i(Y^{\eps,\delta}_{s-},x)\right|^4  \nu(dx) \thinspace ds \\
%& \lesssim \sum_{i=1}^{n}\BE \left[\int_0^T (1+ |Y^{\eps,\delta}_{s-}|^4)ds \right] \\
& \qquad \qquad \qquad \qquad  \lesssim T+ \int_0^T \BE \left[\sup_{0 \le r \le s}|Y_{r}^{\eps,\delta}|^4 \right]ds.
\end{aligned}
\end{equation}
%\begin{equation}\label{E:L2-Est-2-Levy-general}
%\begin{aligned}
%\BE \left[\left|\int_0^T\int_{E}F(Y^{\eps,\delta}_{s-},x)\widetilde{N}(ds,dx) \right|^2 \right] & \lesssim \sum_{i=1}^{n}\BE \left[\left|\int_0^T\int_{E}F_i(Y^{\eps,\delta}_{s-},x)\widetilde{N}(ds,dx)\right|^2\right]\\
%& = \sum_{i=1}^{n}\BE \left[\int_0^T \int_{E}|F_i(Y^{\eps,\delta}_{s-},x)|^2 ds \thinspace \nu(dx) \right]\\
%& \lesssim \sum_{i=1}^{n}\BE \left[\int_0^T (1+ |Y^{\eps,\delta}_{s-}|^2)ds \right] \\
%&  \lesssim T+ \int_0^T \BE \left[\sup_{0 \le r \le s}|Y_{r}^{\eps,\delta}|^2 \right]ds.
%\end{aligned}
%\end{equation}
Now, combination of estimates from equations \eqref{E:Exp-L2}, \eqref{E:L2-Est-1-Levy}, \eqref{E:L2-Est-2-Levy-general} yields the required result.
\end{proof}

We are now in the position to prove Proposition \ref{P:LLN-Stoc-App-Levy}. 
\begin{proof}[Proof of Proposition \ref{P:LLN-Stoc-App-Levy}]
For $t \ge 0, \thinspace p \in \{2,4\}$ using the integral representations of $y_t^\delta$ and $Y_t^{\eps,\delta}$ from \eqref{E:ie-sampling} and \eqref{E:sde} followed by taking norm on both sides of the above equation and then using \eqref{E:Triangle-type-ineq-Levy} and H$\ddot{\text{o}}$lder's inequality, we get\footnote{For any $t\ge 0$ and $\delta>0$, the continuity of $y_t^\delta$ implies $y_t^\delta= y_{t-}^\delta$. We will be using this fact in case of $y_t$ as well (without explicit mention).}
%\begin{multline*}
%Y_t^{\eps,\delta}-y_t^\delta =  \int_0^t \left[c\left(Y^{\eps,\delta}_{s-}, Y^{\eps,\delta}_{\pi_{\delta}(s)-}\right) -  c\left(y^\delta_{s-}, y^\delta_{\pi_{\delta}(s)-}\right)\right] ds+ \eps \int_0^t \sigma(Y^{\eps,\delta}_{s-})\thinspace dW_s \\
%+ \eps \int_0^t \int_{E}F(Y^{\eps,\delta}_{s-},x)\widetilde{N}(ds,dx).
%\end{multline*}
%Now, taking norm on both sides of the above equation and then using \eqref{E:Triangle-type-ineq-Levy} and H$\ddot{\text{o}}$lder's inequality, we obtain ($p \in \{2,4\}$)
\begin{multline*}
|Y_t^{\eps,\delta}-y_t^\delta|^p \lesssim  t^{p-1} \int_0^t\left|c\left(Y^{\eps,\delta}_{s-}, Y^{\eps,\delta}_{\pi_{\delta}(s)-}\right) -  c\left(y^\delta_{s-}, y^\delta_{\pi_{\delta}(s)-}\right)\right|^p ds 
\\ + \eps^p \left|\int_0^t \sigma(Y^{\eps,\delta}_{s-}) \thinspace dW_s\right|^p 
+ \eps^p \left|\int_0^t \int_{E}F(Y^{\eps,\delta}_{s-},x)\widetilde{N}(ds,dx)\right|^p.
\end{multline*}
Using Lipschitz continuity of $c$, then taking supremum on right side and using \eqref{E:cadlag-property}, we have
%\begin{multline*}
%|Y_t^{\eps,\delta}-y_t^\delta|^p \lesssim  t^{p-1} \int_0^t \sup_{0<s \le r}|Y^{\eps,\delta}_{s-}-y^\delta_{s-}|^p \thinspace dr +  t^{p-1} \int_0^t \sup_{0<s \le r} \left|y^\delta_{\pi_{\delta}(s)-}-Y^{\eps,\delta}_{\pi_{\delta}(s)-}\right|^p \thinspace dr \\+ \eps^p \sup_{0 \le s \le t} \left|\int_0^s \sigma(Y^{\eps,\delta}_{r-}) \thinspace dW_r\right|^p+ \eps^p \sup_{0 \le s \le t} \left|\int_0^s \int_{E}F(Y^{\eps,\delta}_{r-},x)\widetilde{N}(dr,dx)\right|^p.
%\end{multline*}
%From \eqref{E:cadlag-property}, we get
\begin{multline*}
|Y_t^{\eps,\delta}-y_t^\delta|^p \lesssim t^{p-1} \int_0^t \sup_{0 \le s \le r}|Y^{\eps,\delta}_{s}-y^\delta_{s}|^p \thinspace dr + \eps^p \sup_{0 \le s \le t} \left|\int_0^s \sigma(Y^{\eps,\delta}_{r-})\thinspace dW_r\right|^p\\
 + \eps^p \sup_{0 \le s \le t} \left|\int_0^s \int_{E}F(Y^{\eps,\delta}_{r-},x)\widetilde{N}(dr,dx)\right|^p.
\end{multline*}
Now, as $t \in [0,T],$ and the right hand side of the above equation is non-decreasing in time $t$, taking supremum over $[0,T]$ followed by expectation on both sides, we obtain
\begin{equation*}
\begin{aligned}
\BE\left[\sup_{0 \le s \le T}|Y_s^{\eps,\delta}-y_s^\delta|^p \right] & \lesssim   \int_0^T \BE \sup_{0 \le s \le r}|Y^{\eps,\delta}_{s}-y^\delta_{s}|^p dr  + \eps^p \BE \sup_{0 \le s \le T} \left|\int_0^s \sigma(Y^{\eps,\delta}_{r-}) dW_r\right|^p \\
&  \quad \qquad \qquad  \qquad \qquad  \qquad \qquad + \eps^p \BE \left[ \sup_{0 \le s \le T} \left|\int_0^s \int_{E}F(Y^{\eps,\delta}_{r-},x)\widetilde{N}(dr,dx)\right|^p \right].
 \end{aligned}
\end{equation*}
%\begin{equation*}
%\begin{aligned}
%\BE\left[\sup_{0 \le s \le T}|Y_s^{\eps,\delta}-y_s^\delta|^p \right] & \lesssim   \int_0^T \BE \left[\sup_{0 \le s \le r}|Y^{\eps,\delta}_{s}-y^\delta_{s}|^p \right] dr \\
%& \qquad \qquad   + \eps^p \BE \left[\sup_{0 \le s \le T} \left|\int_0^s \sigma(Y^{\eps,\delta}_{r-}) dW_r\right|^p \right]\\
%&  \quad \qquad  \qquad + \eps^p \BE \left[ \sup_{0 \le s \le T} \left|\int_0^s \int_{E}F(Y^{\eps,\delta}_{r-},x)\widetilde{N}(dr,dx)\right|^p \right].
% \end{aligned}
%\end{equation*}
Next, using the estimates for the terms $\BE \left[\sup_{0 \le s \le T} \left|\int_0^s \sigma(Y^{\eps,\delta}_{r-}) dW_r\right|^p \right]$ and \\ $\BE \left[ \sup_{0 \le s \le T} \left|\int_0^s \int_{E}F(Y^{\eps,\delta}_{r-},x)\widetilde{N}(dr,dx)\right|^p \right]$ from Lemma \ref{L:L2-Est-Levy} followed by Gronwall's inequality, we get the required result.
\end{proof}

\section{Analysis of fluctuations: Regimes 1 and 2}\label{S:FCLT-Levy}

In this section, we prove our second main result (Theorem \ref{T:fluctuations-R-1-2-Levy}). Our thoughts are organized as follows: in Section \ref{SS:Proof-CLT-Levy}, we state Propositions \ref{P:Main-Term-App-Levy}, \ref{P:Remainder-term-est}, \ref{P:White-Noise-Term-Est}, \ref{P:Poisson-Noise-Est} which are the main building blocks of the proof of Theorem \ref{T:fluctuations-R-1-2-Levy}. Proposition \ref{P:Main-Term-App-Levy} is a key result in the study of the fluctuation analysis of the rescaled process $Z_t^{\eps,\delta}\triangleq \eps^{-1}(Y_t^{\eps,\delta}-y_t)$ solving \eqref{E:Central-Limit-Eq-Levy} which enables us to get the extra drift term in the limiting {\sc sde} \eqref{E:lim-fluct-R-1-2-Levy}. Propositions \ref{P:Remainder-term-est}, \ref{P:White-Noise-Term-Est} and \ref{P:Poisson-Noise-Est} deal with the remainder term and the stochastic terms 
$\int_0^t \{\sigma(Y_{s-}^{\eps,\delta})-\sigma(y_{s-})\}dW_s$ and $\int_0^t\int_{E} \{F(Y_{s-}^{\eps,\delta},x)-F(y_{s-},x)\}\widetilde{N}(ds,dx)$, respectively and they are small in the limit. The proofs of Propositions \ref{P:Main-Term-App-Levy}, \ref{P:Remainder-term-est}, \ref{P:White-Noise-Term-Est}, \ref{P:Poisson-Noise-Est} are presented in Section \ref{SS:Proofs-Prop-CLT-Levy}. Proof of Proposition \ref{P:Main-Term-App-Levy} does require a couple of auxiliary lemmas; the proofs of these lemmas are deferred to Section \ref{S:Main-Term-App-Levy} to reduce clutter.

%%%%%%%%%%%%%%%%%%%%%%%%%%%%%%%%%%%%%%%%%%%%%%%%%%%%%%%%%
%%%%%%%%%%%%%%%%%%%%%%%%%%%%%%%%%%%%%%%%%%%%%%%%%%%%%%%%%%

%  Proposition \ref{P:Main-Term-App-Levy} gives a simple approximation to the term $\eps^{-1}\int_0^t ({Y_{s-}^{\varepsilon, \delta}-Y_{\pi_\delta(s)-}^{\varepsilon, \delta}})\thinspace ds $ which captures both sampling and noise effect. Then, in Propositions \ref{P:White-Noise-Term-Est} and \ref{P:Poisson-Noise-Est}, we estimate the bounds for the terms 
%$\BE[ \sup_{0\le t \le T}|\int_0^t \{\sigma(Y_{s-}^{\eps,\delta})-\sigma(y_{s-})\}dW_s|^2 ]$ and $\BE[ \sup_{0\le t \le T}|\int_0^t\int_{E} \{F(Y_{s-}^{\eps,\delta},x)-F(y_{s-},x)\}\widetilde{N}(ds,dx)|^2 ]$ respectively. The proof of Proposition \ref{P:Main-Term-App-Levy} is deferred to next  Section as it depends on a series of lemmas. However, the proofs of Propositions \ref{P:White-Noise-Term-Est} and \ref{P:Poisson-Noise-Est} are given in the present section. 

%%%%%%%%%%%%%%%%%%%%%%%%%%%%%%%%%%%%%%%%%%%%%%%%%%%%%%%%%%
%%%%%%%%%%%%%%%%%%%%%%%%%%%%%%%%%%%%%%%%%%%%%%%%%%%%%%%%%%

\subsection{Proof of Theorem \ref{T:fluctuations-R-1-2-Levy}}\label{SS:Proof-CLT-Levy}
For Regimes 1 and 2, we have already noticed in Section \ref{S:PS-MainResult-Levy} that the rescaled fluctuation process $Z_t^{\eps,\delta}\triangleq \eps^{-1}(Y_t^{\eps,\delta}-y_t)$ satisfies equation \eqref{E:Central-Limit-Eq-Levy}. Now, in order to show the effective (independent of small parameters $\eps$ and $\delta$) process $Z_t$, solving equation \eqref{E:lim-fluct-R-1-2-Levy}, describes the limiting behavior of process $Z_t^{\eps,\delta}$,
%is the approximation of the fluctuation process $Z_t^{\eps,\delta}$,
one needs to demonstrate the following equation holds in a suitable path-wise sense:
\begin{equation}\label{E:ell-Levy}
\lim_{\substack{\eps,\delta \searrow 0\\ \delta/\eps \to \ell}}\int_0^t D_2c(y_s,y_s)\frac{Y_{s-}^{\varepsilon, \delta}-Y_{\pi_\delta(s)-}^{\varepsilon, \delta}}{\eps}\thinspace ds  =\dd(t), \quad \text{where} \quad
\dd(t) \triangleq \frac{\ell}{2} \int_0^t D_2c(y_s,y_s)\cdot c(y_s,y_s) \thinspace ds.
\end{equation}
Equation \eqref{E:ell-Levy} above is important as it gives the identification of the drift term $\dd(t)$ in \eqref{E:lim-fluct-R-1-2-Levy}. Next, in Proposition \ref{P:Main-Term-App-Levy}, the convergence of $ \int_0^t D_2c(y_s,y_s)\frac{{Y_{s-}^{\varepsilon, \delta}-Y_{\pi_\delta(s)-}^{\varepsilon, \delta}}}{\eps}\thinspace ds$, as $\eps,\delta \searrow 0$,  to $\dd(t)$ is shown and this allows one to replace the quantity $\eps^{-1} \int_0^t D_2c(y_s,y_s)({Y_{s-}^{\varepsilon, \delta}-Y_{\pi_\delta(s)-}^{\varepsilon, \delta}})\thinspace ds$ in equation \eqref{E:Central-Limit-Eq-Levy} by $\dd(t).$

\begin{proposition}\label{P:Main-Term-App-Levy}
Let $y_t$ and $Y_t^{\varepsilon,\delta}$  solve \eqref{E:closed-system} and \eqref{E:sde}, respectively. Then, for any fixed $T>0,$ there exists a positive constant $C_{\ref{P:Main-Term-App-Levy}}(T)$ such that for any $0< \eps <\eps_0$ 
\begin{equation*}
\BE\left[ \sup_{0 \le t \le T}\left|\int_0^t D_2c(y_s,y_s)\frac{Y_{s-}^{\varepsilon, \delta}-Y_{\pi_\delta(s)-}^{\varepsilon, \delta}}{\eps}\thinspace ds - \dd(t) \right|^2 \right]\\ \le (\ell+1)^2\{\eps^2(1+\delta)+ \delta + \varkappa^2(\eps) \}C_{\ref{P:Main-Term-App-Levy}}(T),
\end{equation*}
where $\varkappa(\eps)\searrow 0$ is as in equation \eqref{E:kappa-Levy} and $\ell(t)$ is defined in \eqref{E:ell-Levy}.
\end{proposition}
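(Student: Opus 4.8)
The plan is to analyze the integrand $Y_{s-}^{\eps,\delta}-Y_{\pi_\delta(s)-}^{\eps,\delta}$ on each sampling interval $[k\delta,(k+1)\delta)$ using the integral form of \eqref{E:sde}, and then to compare the resulting expression against the target $\ell(t)=\frac{\cc}{2}\int_0^t (A-BK)y_s\,ds$. First I would fix $t\in[0,T]$ and write $\int_0^t (Y_{s-}^{\eps,\delta}-Y_{\pi_\delta(s)-}^{\eps,\delta})\,ds$ as a sum over complete sampling intervals plus a boundary (last, incomplete) interval. On $[k\delta,s]$ with $s<(k+1)\delta$, equation \eqref{E:sde} gives
\begin{equation*}
Y_{s-}^{\eps,\delta}-Y_{k\delta-}^{\eps,\delta}=\int_{k\delta}^{s}\!\big[AY_{r-}^{\eps,\delta}-BKY_{k\delta-}^{\eps,\delta}\big]dr+\eps\!\int_{k\delta}^{s}\!\sigma(Y_{r-}^{\eps,\delta})dW_r+\eps\!\int_{k\delta}^{s}\!\int_E F(Y_{r-}^{\eps,\delta},x)\widetilde N(dr,dx).
\end{equation*}
The leading deterministic part of the drift, to lowest order, is $(A-BK)Y_{k\delta-}^{\eps,\delta}\,(s-k\delta)$, and integrating $s-k\delta$ over a full interval of length $\delta$ contributes $\delta^2/2$; summing over the roughly $t/\delta$ intervals and dividing by $\eps$ produces $\frac{\delta}{2\eps}\sum_k (A-BK)Y_{k\delta-}^{\eps,\delta}\,\delta$, which is a Riemann sum converging (with $\delta/\eps\to\cc$) to $\frac{\cc}{2}\int_0^t (A-BK)y_s\,ds=\ell(t)$. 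So the strategy is: isolate this principal term, and bound everything else.

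The key steps, in order, are as follows. (1) Decompose $\eps^{-1}\int_0^t (Y_{s-}^{\eps,\delta}-Y_{\pi_\delta(s)-}^{\eps,\delta})\,ds-\ell(t)$ into: (a) the replacement error between $\frac{\delta}{2\eps}\sum_k (A-BK)Y_{k\delta-}^{\eps,\delta}\delta$ and $\ell(t)$, which splits further into $(\frac{\delta}{\eps}-\cc)$ times a bounded Riemann sum — contributing the $\kap^2(\eps)$ term — plus the error between $Y_{k\delta-}^{\eps,\delta}$ and $y_{k\delta}$ (controlled by Theorem \ref{T:LLN}, giving $\eps^2+\delta^2$) plus the Riemann-sum-to-integral error for the smooth map $s\mapsto (A-BK)y_s$ (controlled by Lemma \ref{L:Sampling-Difference-Levy}, of order $\delta$); (b) the higher-order drift remainder $\eps^{-1}\sum_k\int_{k\delta}^{(k+1)\delta}\!\int_{k\delta}^s A(Y_{r-}^{\eps,\delta}-Y_{k\delta-}^{\eps,\delta})\,dr\,ds$, which is $O(\delta^2)$ per the length-$\delta$ double integral times the per-interval fluctuation of $Y^{\eps,\delta}$, hence $O(\delta)$ after summing and dividing — here Lemma \ref{L:L2-Est-Levy} bounds $\BE\sup|Y^{\eps,\delta}|^2$; (c) the noise terms $\int_{k\delta}^{s}\sigma(Y_{r-}^{\eps,\delta})dW_r$ and the compensated Poisson integral, integrated in $s$ over $[k\delta,(k+1)\delta)$, summed over $k$, with the $\eps^{-1}$ cancelling the $\eps$ in front of the noise. (2) For the noise terms, after interchanging the $ds$-integral and the stochastic integral (Fubini/stochastic Fubini), take $\sup_{0\le t\le T}$, square, take expectation, and apply Doob's maximal inequality and the Itô isometry together with the linear growth Assumption \ref{A:Linear-growth-condition-Levy} and Lemma \ref{L:L2-Est-Levy}; each sampling interval has length $\delta$, so the quadratic variation picks up a factor $\delta$ and after summing over $O(T/\delta)$ intervals one gets an overall bound of order $\delta$ (this is the source of the bare $\delta$, rather than $\delta^2$, in the statement). (3) Handle the incomplete last interval $[\pi_\delta(t),t]$ separately — it has length $\le\delta$, so it contributes at most $O(\delta)$ uniformly. (4) Collect all pieces, use $(|x_1|+\dots+|x_n|)^2\lesssim\sum|x_i|^2$ from \eqref{E:Triangle-type-ineq-Levy}, and factor out $(\cc+1)^2$ using \eqref{E:eps0} (so that $\delta_\eps<(\cc+1)\eps$), arriving at the claimed bound $(\cc+1)^2\{\eps^2(1+\delta)+\delta+\kap^2(\eps)\}C_{\ref{P:Main-Term-App-Levy}}(T)$.

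The main obstacle I anticipate is the careful bookkeeping in step (2), specifically the stochastic-Fubini interchange and obtaining the correct power of $\delta$: naively one might expect the noise contribution to be $O(\sqrt{\delta})$ in $L^2$-norm of the supremum, i.e. $O(\delta)$ after squaring — and indeed that matches the $\delta$ in the bound — but one must be careful that summing the per-interval martingale increments does not lose a factor, which requires treating $\sum_k \int_{k\delta}^{(k+1)\delta}(\cdot)\,ds$ as (essentially) a single martingale evaluated over $[0,t]$ rather than as a sum of $T/\delta$ independent-looking pieces each of size $\sqrt{\delta}$; using the martingale property and Doob on the whole interval is what keeps the estimate at order $\delta$ and not $1$. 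A secondary technical point is that $Y^{\eps,\delta}$ is only càdlàg, so all the left-limit bookkeeping must be done via \eqref{E:cadlag-property}, exactly as in the proof of Proposition \ref{P:LLN-Stoc-App-Levy}. Modulo this, the rest is routine application of Theorem \ref{T:LLN}, Lemmas \ref{L:Sampling-Difference-Levy} and \ref{L:L2-Est-Levy}, and Gronwall where needed.
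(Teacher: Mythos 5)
Your overall architecture matches the paper's: integrate the {\sc sde} over each sampling interval, extract the principal Riemann-sum term $\frac{\delta}{2\eps}\sum_k \delta\,(A-BK)y_{k\delta}$ converging to $\ell(t)$ (the source of the $\kap^2(\eps)$ and $\delta^2(\cc+1)^2$ errors), and bound the drift remainder and the two stochastic integrals separately; this is precisely the decomposition into ${\sf M}_1^{\eps,\delta},\dots,{\sf M}_4^{\eps,\delta}$ of Lemma \ref{L:M-terms-levy}. However, your step (b) contains a genuine gap. The higher-order drift remainder is, up to constants, $\eps^{-1}\int_0^t\int_{\pi_\delta(s)}^s A\,(Y_{r-}^{\eps,\delta}-Y_{\pi_\delta(r)-}^{\eps,\delta})\,dr\,ds$. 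Bounding the integrand via Lemma \ref{L:L2-Est-Levy} only gives $|Y_{r-}^{\eps,\delta}-Y_{\pi_\delta(r)-}^{\eps,\delta}|=O(1)$ in $L^2$, so the double integral is $O(\delta t)$ and, after the division by $\eps$ that your bookkeeping drops, the term is $O(\delta/\eps)=O(\cc+1)$ --- bounded but \emph{not} vanishing; its square contributes $O((\cc+1)^2)$ to the final estimate and the proof fails. The correct treatment, which is what the paper does in Lemma \ref{L:M1-Est-Levy}, is to write $Y_{r}^{\eps,\delta}-Y_{\pi_\delta(r)}^{\eps,\delta}=(Y_r^{\eps,\delta}-y_r)+(y_r-y_{\pi_\delta(r)})+(y_{\pi_\delta(r)}-Y_{\pi_\delta(r)}^{\eps,\delta})$ and invoke Theorem \ref{T:LLN} together with Lemma \ref{L:Sampling-Difference-Levy}, making the integrand $O(\eps+\delta)$ in $L^2$; the remainder is then $O\bigl(\tfrac{\delta}{\eps}(\eps+\delta)\bigr)=O((\cc+1)(\eps+\delta))$, whose square is the $(\cc+1)^2(\eps^2+\delta^2)$ consistent with the claimed bound. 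The same correction is needed wherever you bound the incomplete last interval by ``length $\le\delta$'' without accounting for the $\eps^{-1}$ prefactor.

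A secondary point concerns the noise terms: your stochastic-Fubini-plus-Doob route is more delicate than you acknowledge, since after the interchange the kernel $g_t(r)=\bigl(\min((\lfloor r/\delta\rfloor+1)\delta,\,t)-r\bigr)^{+}$ depends on the upper limit $t$, so $t\mapsto\int_0^t g_t(r)\sigma(Y_{r-}^{\eps,\delta})\,dW_r$ is not a martingale and Doob's inequality does not apply directly. The paper avoids this entirely (Lemmas \ref{L:M3-Est-WhiteNoise-Levy} and \ref{L:M4-Est-PoissonNoise}): by Cauchy--Schwarz on the outer $ds$-integral, $\sup_{0\le t\le T}|{\sf M}_3^{\eps,\delta}(t)|^2\le T\int_0^T\bigl|\int_{\pi_\delta(s)}^s\sigma(Y_{r-}^{\eps,\delta})\,dW_r\bigr|^2\,ds$, and the It\^o isometry applied to each inner integral of length at most $\delta$ immediately yields the $O(\delta)$ bound with no stochastic Fubini and no maximal inequality. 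Adopting that route and repairing step (b) as above makes the rest of your outline sound.
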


Proposition \ref{P:Remainder-term-est} deals with the remainder term and shows it is small in an appropriate sense.
\begin{proposition}\label{P:Remainder-term-est}
Let ${\sf R}_t^{\eps,\delta}$ be defined as follows:
\begin{multline*}
{\sf R}_t^{\eps,\delta}\triangleq \int_0^t \left[\frac{c\left(Y^{\eps,\delta}_{s-}, Y^{\eps,\delta}_{{\pi_\delta(s)-}}\right)-c(y_s,y_s)}{\eps} - D_1c(y_s,y_s)Z^{\eps,\delta}_{s-} - D_2c(y_s,y_s)Z^{\eps,\delta}_{s-} \right. \\
\left. -D_2c(y_s,y_s)\left(\frac{Y^{\eps,\delta}_{{\pi_\delta(s)-}}-Y_{s-}^{\eps,\delta}}{\eps} \right)\right]ds. 
\end{multline*}
Then, for any fixed $T>0,$ there exists a positive constant $C_{\ref{P:Remainder-term-est}}(T)$ such that 
\begin{equation*}
\BE\left[\sup_{0\le t \le T}|{\sf R}_t^{\eps,\delta}|^2\right] \le C_{\ref{P:Remainder-term-est}}(T)(\eps^{4}+ \delta^{4}). 
\end{equation*}
\end{proposition}
Propositions \ref{P:White-Noise-Term-Est} and \ref{P:Poisson-Noise-Est} below allow one to replace the stochastic terms  $\sigma(Y_{s-}^{\eps,\delta})$ and $F(Y_{s-}^{\eps,\delta},x)$ in equation \eqref{E:Central-Limit-Eq-Levy} by the deterministic terms  $\sigma(y_{s-})$ and $F(y_{s-},x)$ respectively.
\begin{proposition}\label{P:White-Noise-Term-Est}
Let $y_t$ and $Y_t^{\varepsilon,\delta}$  solve \eqref{E:closed-system} and \eqref{E:sde}, respectively. Then, for any fixed $T>0,$ there exists a positive constant $C_{\ref{P:White-Noise-Term-Est}}(T)$ such that for any $\eps,\delta>0$
\begin{equation*}
\BE\left[ \sup_{0\le t \le T}\left|\int_0^t \{\sigma(Y_{s-}^{\eps,\delta})-\sigma(y_{s-})\}\thinspace dW_s\right|^2 \right] \le  (\eps^2 +\delta^2)C_{\ref{P:White-Noise-Term-Est}}(T).
\end{equation*}
\end{proposition}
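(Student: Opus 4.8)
The plan is to reduce the estimate to the Lipschitz bound of Assumption \ref{A:Lip-continuity-Levy} via Doob's maximal inequality and the It\^o isometry, and then control $\BE[\sup_{0\le s\le T}|Y_s^{\eps,\delta}-y_s|^2]$ by Theorem \ref{T:LLN}. First I would observe that, since $\{\int_0^t \{\sigma(Y_{s-}^{\eps,\delta})-\sigma(y_{s-})\}\thinspace dW_s\}_{t\ge 0}$ is an $L^2$-martingale (the integrand lies in the appropriate space by Assumptions \ref{A:Lip-continuity-Levy} and \ref{A:Linear-growth-condition-Levy} together with Lemma \ref{L:L2-Est-Levy}), Doob's maximal inequality gives
\begin{equation*}
\BE\left[ \sup_{0\le t \le T}\left|\int_0^t \{\sigma(Y_{s-}^{\eps,\delta})-\sigma(y_{s-})\}\thinspace dW_s\right|^2 \right] \lesssim \BE\left[\left|\int_0^T \{\sigma(Y_{s-}^{\eps,\delta})-\sigma(y_{s-})\}\thinspace dW_s\right|^2 \right].
\end{equation*}

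Next, decomposing the vector-valued stochastic integral into its scalar components exactly as in the proof of Lemma \ref{L:L2-Est-Levy} (writing $|\int_0^T\{\sigma(Y_{s-}^{\eps,\delta})-\sigma(y_{s-})\}dW_s|^2 \lesssim \sum_{i,j}|\int_0^T\{\sigma_{ji}(Y_{s-}^{\eps,\delta})-\sigma_{ji}(y_{s-})\}dW_s^i|^2$), I would apply the It\^o isometry to each term to obtain
\begin{equation*}
\BE\left[\left|\int_0^T \{\sigma(Y_{s-}^{\eps,\delta})-\sigma(y_{s-})\}\thinspace dW_s\right|^2 \right] \lesssim \BE\left[\int_0^T |\sigma(Y_{s-}^{\eps,\delta})-\sigma(y_{s-})|^2\thinspace ds\right].
\end{equation*}
Now Assumption \ref{A:Lip-continuity-Levy} bounds the integrand by $\tilde C|Y_{s-}^{\eps,\delta}-y_{s-}|^2$, and using the c\`adl\`ag property \eqref{E:cadlag-property} together with the continuity of $y$ this is at most $\tilde C\sup_{0\le r\le s}|Y_r^{\eps,\delta}-y_r|^2$. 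Hence
\begin{equation*}
\BE\left[ \sup_{0\le t \le T}\left|\int_0^t \{\sigma(Y_{s-}^{\eps,\delta})-\sigma(y_{s-})\}\thinspace dW_s\right|^2 \right] \lesssim \int_0^T \BE\left[\sup_{0\le r\le s}|Y_r^{\eps,\delta}-y_r|^2\right]ds \le T\, \BE\left[\sup_{0\le r\le T}|Y_r^{\eps,\delta}-y_r|^2\right].
\end{equation*}
Applying Theorem \ref{T:LLN} to the last expectation yields the bound $(\eps^2+\delta^2)C_{\ref{P:White-Noise-Term-Est}}(T)$, as claimed.

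This argument is essentially routine; the only points requiring a little care are the justification that the integrand belongs to $\mathcal H_2(T,E)$-type spaces so that the It\^o isometry and Doob's inequality apply (which follows from the linear growth Assumption \ref{A:Linear-growth-condition-Levy} and Lemma \ref{L:L2-Est-Levy}), and keeping the dependence on constants free of $\eps,\delta,T$ so that the final constant has the stated form. There is no serious obstacle here — the real work of the fluctuation analysis is concentrated in Proposition \ref{P:Main-Term-App-Levy}, not in this noise-comparison estimate.
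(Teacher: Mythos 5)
Your proposal is correct and follows essentially the same route as the paper's proof: componentwise decomposition of the vector stochastic integral, Doob's maximal inequality, the It\^o isometry, the Lipschitz bound from Assumption \ref{A:Lip-continuity-Levy}, the c\`adl\`ag property \eqref{E:cadlag-property}, and finally Theorem \ref{T:LLN}. The only (immaterial) difference is the order in which Doob's inequality and the componentwise reduction are applied.
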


\begin{proposition}\label{P:Poisson-Noise-Est}
Let $y_t$ and $Y_t^{\varepsilon,\delta}$  solve \eqref{E:closed-system} and \eqref{E:sde}, respectively. Then, for any fixed $T>0,$ there exists a positive constant $C_{\ref{P:Poisson-Noise-Est}}(T)$ such that for any $\eps,\delta>0$
\begin{equation*}
\BE\left[ \sup_{0\le t \le T}\left|\int_0^t\int_{E} \{F(Y_{s-}^{\eps,\delta},x)-F(y_{s-},x)\}\widetilde{N}(ds,dx)\right|^2 \right] \le  (\eps^2 +\delta^2)C_{\ref{P:Poisson-Noise-Est}}(T).
\end{equation*}
\end{proposition}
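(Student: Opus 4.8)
The plan is to reduce the path-wise supremum to a single terminal $L^2$ bound via Doob's maximal inequality, convert the resulting second moment of the compensated Poisson integral into an ordinary Lebesgue--$\nu$ integral by the It\^o isometry, control the integrand through the Lipschitz hypothesis on $F$, and finally invoke the law of large numbers estimate (Theorem \ref{T:LLN}).

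First I would observe that $M_t \triangleq \int_0^t\int_E\{F(Y^{\eps,\delta}_{s-},x)-F(y_{s-},x)\}\widetilde N(ds,dx)$ is a vector-valued c\`adl\`ag $L^2$-martingale: by Assumption \ref{A:Linear-growth-condition-Levy}, the inequality \eqref{E:Triangle-type-ineq-Levy}, and the moment bound $\BE[\sup_{0\le s\le T}|Y^{\eps,\delta}_s|^2]\le C_{\ref{L:L2-Est-Levy}}(T)$ from Lemma \ref{L:L2-Est-Levy}, the integrand lies in $\mathcal H_2(T,E)$ componentwise, so each component $M^i$ is a genuine $L^2$-martingale and Doob's maximal inequality (\cite[Theorem 2.1.5]{applebaum2009levy}) applies. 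Splitting $|M_t|^2\lesssim\sum_{i=1}^n|M^i_t|^2$ via \eqref{E:Triangle-type-ineq-Levy} and then applying the It\^o isometry for $\widetilde N$ to each component, one obtains
\[
\BE\Big[\sup_{0\le t\le T}|M_t|^2\Big]\;\lesssim\;\sum_{i=1}^n\BE\big[|M^i_T|^2\big]\;=\;\sum_{i=1}^n\BE\Big[\int_0^T\!\!\int_E|F_i(Y^{\eps,\delta}_{s-},x)-F_i(y_{s-},x)|^2\,ds\,\nu(dx)\Big].
\]

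Next I would bound the integrand. Summing the components back up and using Assumption \ref{A:Lip-continuity-Levy} gives $\int_E|F(Y^{\eps,\delta}_{s-},x)-F(y_{s-},x)|^2\,\nu(dx)\le\tilde C\,|Y^{\eps,\delta}_{s-}-y_{s-}|^2$; since $y$ is continuous, $y_{s-}=y_s$, and by \eqref{E:cadlag-property} one has $|Y^{\eps,\delta}_{s-}-y_{s-}|\le\sup_{0\le r\le s}|Y^{\eps,\delta}_r-y_r|$. After an application of Tonelli's theorem this yields
\[
\BE\Big[\sup_{0\le t\le T}|M_t|^2\Big]\;\lesssim\;\int_0^T\BE\Big[\sup_{0\le r\le s}|Y^{\eps,\delta}_r-y_r|^2\Big]\,ds\;\le\;T\,\BE\Big[\sup_{0\le r\le T}|Y^{\eps,\delta}_r-y_r|^2\Big],
\]
and Theorem \ref{T:LLN} bounds the right-hand side by $(\eps^2+\delta^2)\,T\,C_{\ref{T:LLN}}(T)$, giving the claim with $C_{\ref{P:Poisson-Noise-Est}}(T)$ absorbing $T$, $\tilde C$, $n$ and $C_{\ref{T:LLN}}(T)$.

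There is little genuine difficulty here: the only point that requires care is the justification that the integrand belongs to $\mathcal H_2(T,E)$, so that the It\^o isometry and the martingale property (hence Doob's inequality) are legitimately available. This is precisely where the a priori moment bound of Lemma \ref{L:L2-Est-Levy} together with the linear growth Assumption \ref{A:Linear-growth-condition-Levy} are needed, rather than the Lipschitz bound, which only does the quantitative work at the end. Everything else is a routine chain of the standard inequalities already collected in the Notation section.
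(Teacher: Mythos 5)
Your proposal is correct and follows essentially the same route as the paper's proof: componentwise splitting via \eqref{E:Triangle-type-ineq-Levy}, Doob's maximal inequality, the It\^o isometry, the Lipschitz hypothesis on $F$, the c\`adl\`ag bound \eqref{E:cadlag-property}, and finally Theorem \ref{T:LLN}. The only addition is your explicit verification that the integrand lies in $\mathcal H_2(T,E)$, which the paper leaves implicit; this is a welcome but minor refinement rather than a different argument.
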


%\begin{proposition}\label{P:White-Noise-Term-Est}
%Let $y_t$ and $Y_t^{\varepsilon,\delta}$  solve \eqref{E:closed-system} and \eqref{E:sde}, respectively. Then, for any fixed $T>0,$ there exists a positive constant $C_{\ref{P:White-Noise-Term-Est}}(T)$ such that for any $\eps,\delta>0$
%\begin{equation*}
%\BE\left[ \sup_{0\le t \le T}\left|\int_0^t \{\sigma(Y_{s-}^{\eps,\delta})-\sigma(y_{s-})\}\thinspace dW_s\right|^2 \right] \le  (\eps^2 +\delta^2)C_{\ref{P:White-Noise-Term-Est}}(T).
%\end{equation*}
%\end{proposition}

%\begin{proposition}\label{P:Poisson-Noise-Est}
%Let $y_t$ and $Y_t^{\varepsilon,\delta}$  solve \eqref{E:closed-system} and \eqref{E:sde}, respectively. Then, for any fixed $T>0,$ there exists a positive constant $C_{\ref{P:Poisson-Noise-Est}}(T)$ such that for any $\eps,\delta>0$
%\begin{equation*}
%\BE\left[ \sup_{0\le t \le T}\left|\int_0^t\int_{E} \{F(Y_{s-}^{\eps,\delta},x)-F(y_{s-},x)\}\widetilde{N}(ds,dx)\right|^2 \right] \le  (\eps^2 +\delta^2)C_{\ref{P:Poisson-Noise-Est}}(T).
%\end{equation*}
%\end{proposition}
As noted earlier, the proofs of these results are given in Section \ref{SS:Proofs-Prop-CLT-Levy}. We now give the proof of Theorem \ref{T:fluctuations-R-1-2-Levy}.
\begin{proof}[Proof of Theorem \ref{T:fluctuations-R-1-2-Levy}]
Here, $|\cdot|$ represents the one norm. Recall that $Y^{\eps,\delta}_t - y_t - \eps Z_t = \eps (Z^{\eps,\delta}_t - Z_t )$, where $Z^{\eps,\delta}_t$ and $Z_t$ are given by \eqref{E:Central-Limit-Eq-Levy} and \eqref{E:lim-fluct-R-1-2-Levy}, respectively. Hence,

\begin{equation*}
\begin{aligned}
Z^{\eps,\delta}_t - Z_t &= \int_0^t \{D_1c(y_s,y_s)+D_2c(y_s,y_s)\}(Z_{s-}^{\eps,\delta}-Z_{s-})\thinspace ds + \int_0^t D_2c(y_s,y_s)\left(\frac{Y^{\eps,\delta}_{{\pi_\delta(s)-}}-Y_{s-}^{\eps,\delta}}{\eps} \right)ds - \dd(t) \\
& \qquad \qquad \quad+ \int_0^t \{\sigma(Y_{s-}^{\eps,\delta})-\sigma(y_{s-})\} \thinspace dW_s   + \int_0^t\int_{E} \{F(Y_{s-}^{\eps,\delta},x)-F(y_{s-},x)\}\widetilde{N}(ds,dx) + {\sf R}_t^{\eps,\delta},
\end{aligned}
\end{equation*}
where ${\sf R}_t^{\eps,\delta}$ is defined in equation \eqref{E:Central-Limit-Eq-Levy}. Using Assumption \ref{A:Derivative} (in particular, $|D_1c(x,y)|\le C(1+|y|)$ and $|D_2c(x,y)|\le C$), we have

\begin{equation*}
\begin{aligned}
|Z^{\eps,\delta}_t - Z_t|^2 & \lesssim t \int_0^t |Z^{\eps,\delta}_{s-} - Z_{s-}|^2 \thinspace ds +  \left|\int_0^t D_2c(y_s,y_s) \frac{Y_{s-}^{\varepsilon, \delta}-Y_{\pi_\delta(s)-}^{\varepsilon, \delta}}{\eps}\thinspace ds - \dd(t)\right|^2\\
& \quad  +\left|\int_0^t \{\sigma(Y_{s-}^{\eps,\delta})-\sigma(y_{s-})\} \thinspace dW_s\right|^2 + |{\sf R}_t^{\eps,\delta}|^2  +\left|\int_0^t\int_{E} \{F(Y_{s-}^{\eps,\delta},x)-F(y_{s-},x)\}\widetilde{N}(ds,dx)\right|^2.
\end{aligned}
\end{equation*}
%\begin{equation*}
%\begin{aligned}
%|Z^{\eps,\delta}_t - Z_t|^2 & \lesssim |A-BK|^2 t \int_0^t |Z^{\eps,\delta}_{s-} - Z_{s-}|^2 \thinspace ds + |BK|^2 \left|\int_0^t \frac{Y_{s-}^{\varepsilon, \delta}-Y_{\pi_\delta(s)-}^{\varepsilon, \delta}}{\eps}\thinspace ds - \ell(t)\right|^2\\
%& \qquad \qquad +\left|\int_0^t \{\sigma(Y_{s-}^{\eps,\delta})-\sigma(y_{s-})\} \thinspace dW_s\right|^2
%+\left|\int_0^t\int_{E} \{F(Y_{s-}^{\eps,\delta},x)-F(y_{s-},x)\}\widetilde{N}(ds,dx)\right|^2.
%\end{aligned}
%\end{equation*}
Next, taking supremum over $[0,T]$ followed by the expectation  on both sides of the above equation, we have
\begin{equation*}
\begin{aligned}
\BE \left[\sup_{0\le t \le T}|Z^{\eps,\delta}_t - Z_t|^2\right] & \lesssim  T\int_0^T \BE \left[\sup_{0\le s \le T}|Z^{\eps,\delta}_{s-} - Z_{s-}|^2\right] du \\
& \qquad  + \BE \left[\sup_{0\le t \le T}\left|\int_0^t D_2c(y_s,y_s)\frac{Y_{s-}^{\varepsilon, \delta}-Y_{\pi_\delta(s)-}^{\varepsilon, \delta}}{\eps}\thinspace ds - \dd(t)\right|^2 \right]\\
& \qquad \qquad +\BE\left[\sup_{0\le t \le T}\left|\int_0^t \{\sigma(Y_{s-}^{\eps,\delta})-\sigma(y_{s-})\}\thinspace dW_s\right|^2\right]+ \BE\left[\sup_{0\le t \le T}|{\sf R}_t^{\eps,\delta}|^2\right]\\
& \qquad \qquad \qquad \qquad  +\BE \left[\sup_{0\le t \le T}\left|\int_0^t\int_{E} \{F(Y_{s-}^{\eps,\delta},x)-F(y_{s-},x)\}\widetilde{N}(ds,dx)\right|^2\right].
\end{aligned}
\end{equation*}
Using Propositions \ref{P:Main-Term-App-Levy}, \ref{P:Remainder-term-est}, \ref{P:White-Noise-Term-Est} and \ref{P:Poisson-Noise-Est} and equation \eqref{E:cadlag-property}, followed by Gronwall's inequality, we get the required bound. 
%(\color{blue} Perhaps add the terms corresponding to these propositions.\color{black})
\end{proof}

\subsection{Proofs of Propositions \ref{P:Main-Term-App-Levy} through \ref{P:Poisson-Noise-Est}}\label{SS:Proofs-Prop-CLT-Levy}
In this section, we prove the main building blocks (Propositions \ref{P:Main-Term-App-Levy} through \ref{P:Poisson-Noise-Est}) of the proof of Theorem \ref{T:fluctuations-R-1-2-Levy}. A summary of this section is as follows. In order to prove Proposition \ref{P:Main-Term-App-Levy}, we write the term $\eps^{-1}\int_0^t D_2c(y_s,y_s)({Y_{s}^{\varepsilon, \delta}-Y_{\pi_\delta(s)}^{\varepsilon, \delta}})\thinspace ds$ as a sum of ${\sf M}_i^{\eps,\delta}(t), 1\le i \le 4$, in Lemma \ref{L:M-terms-levy}. Next, bounds on the terms ${\sf M}_i^{\eps,\delta}(t), 1\le i \le 4,$ are obtained in Lemmas \ref{L:M1-Est-Levy} to \ref{L:M4-Est-PoissonNoise} stated below (without proofs in this section) followed by the proof of Proposition \ref{P:Main-Term-App-Levy}. A close inspection shows that the extra drift term $\dd(t)\triangleq \frac{\ell}{2}\int_0^t D_2c(y_s,y_s)c(y_s,y_s)\thinspace ds$ in the limiting {\sc sde} \eqref{E:lim-fluct-R-1-2-Levy} is due to ${\sf M}_2^{\eps,\delta}(t),$ whereas the other ${\sf M}_i^{\eps,\delta}(t), i=1,3,4$ are small in a suitable sense. To keep the presentation clean, the proofs of these lemmas are deferred to Section \ref{S:Main-Term-App-Levy}. We close out this section with the proofs of Propositions \ref{P:Remainder-term-est}, \ref{P:White-Noise-Term-Est} and \ref{P:Poisson-Noise-Est}. 

Moving in the direction of the proof of Proposition \ref{P:Main-Term-App-Levy}, equation \eqref{E:Int-Equal-Levy} below enables us to replace the integral $\int_0^t D_2c(y_s,y_s)(Y_{s-}^{\varepsilon, \delta}-Y_{\pi_\delta(s)-}^{\varepsilon, \delta})\thinspace ds$ by $\int_0^t D_2c(y_s,y_s) (Y_{s}^{\varepsilon, \delta}-Y_{\pi_\delta(s)}^{\varepsilon, \delta})\thinspace ds$ in equation \eqref{E:ell-Levy}. Hence, we will be getting estimate for the term $\BE[ \sup_{0 \le t \le T}|\eps^{-1}\int_0^t D_2c(y_s,y_s)({Y_{s}^{\varepsilon, \delta}-Y_{\pi_\delta(s)}^{\varepsilon, \delta}})\thinspace ds - \dd(t)|^2 ]$ rather than 
$\BE[ \sup_{0 \le t \le T}|\eps^{-1}\int_0^t D_2c(y_s,y_s)({Y_{s-}^{\varepsilon, \delta}-Y_{\pi_\delta(s)-}^{\varepsilon, \delta}})\thinspace ds - \dd(t)|^2 ]$ in Proposition \ref{P:Main-Term-App-Levy}.

\begin{lemma}
Let $Y_t^{\eps,\delta}$ be the solution of equation \eqref{E:sde}. Then, for any $t\in [0,T],$ we have
\begin{equation}\label{E:Int-Equal-Levy}
\int_0^t Y_s^{\eps,\delta}\thinspace ds = \int_0^t Y_{s-}^{\eps,\delta}\thinspace ds.
\end{equation}
\end{lemma}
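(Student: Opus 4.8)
The plan is to exploit the fact that a càdlàg process $Y^{\eps,\delta}$ has at most countably many jumps on $[0,T]$, so the set $D \triangleq \{s \in [0,t] : Y^{\eps,\delta}_s \neq Y^{\eps,\delta}_{s-}\}$ is at most countable and hence Lebesgue-null. On the complement $[0,t] \setminus D$ the integrands $s \mapsto Y^{\eps,\delta}_s$ and $s \mapsto Y^{\eps,\delta}_{s-}$ agree pointwise, so the two Lebesgue integrals coincide. I would phrase this cleanly as: the difference $\int_0^t (Y^{\eps,\delta}_s - Y^{\eps,\delta}_{s-})\, ds = \int_D (Y^{\eps,\delta}_s - Y^{\eps,\delta}_{s-})\, ds = 0$ since the integration is over a set of Lebesgue measure zero.

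First I would record that $Y^{\eps,\delta}$, being a solution of the SDE \eqref{E:sde}, admits a càdlàg modification (indeed every solution of an SDE driven by Brownian motion and a PRM is càdlàg, and we work with such a version throughout, as signalled by the use of $Y^{\eps,\delta}_{s-}$ in the integral formulation). Then I would invoke the standard fact that a càdlàg function on a compact interval has only finitely many jumps exceeding any given size $1/k$, and therefore at most countably many discontinuities in total; consequently the set $D$ of discontinuity points of the path $s \mapsto Y^{\eps,\delta}_s(\omega)$ is countable for each $\omega$, hence Lebesgue-null. Finally I would conclude, for each fixed $\omega$ and each $t \in [0,T]$,
\begin{equation*}
\int_0^t Y^{\eps,\delta}_s\, ds - \int_0^t Y^{\eps,\delta}_{s-}\, ds = \int_{[0,t]} \bigl(Y^{\eps,\delta}_s - Y^{\eps,\delta}_{s-}\bigr)\, ds = \int_{D \cap [0,t]} \bigl(Y^{\eps,\delta}_s - Y^{\eps,\delta}_{s-}\bigr)\, ds = 0,
\end{equation*}
the last equality because the integrand is supported on the null set $D$.

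There is essentially no obstacle here — the only point requiring a word of care is measurability, namely that $s \mapsto Y^{\eps,\delta}_{s-}$ is (jointly) measurable so that the integrals make sense; this is automatic since the left-limit process of a càdlàg adapted process is left-continuous and adapted, hence predictable, hence measurable in $s$. Everything else is a one-line measure-theoretic observation. I would keep the proof to two or three sentences, citing a standard reference (e.g., \cite{applebaum2009levy, EK86}) for the countability of jumps of a càdlàg path if a citation is desired.
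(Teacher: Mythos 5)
Your argument is correct and is essentially identical to the paper's own proof: both observe that the c\`adl\`ag path has at most countably many discontinuities, so $Y^{\eps,\delta}_s - Y^{\eps,\delta}_{s-}$ vanishes outside a Lebesgue-null set and the two integrals coincide. Your additional remarks on measurability of $s \mapsto Y^{\eps,\delta}_{s-}$ are a harmless refinement of the same one-line argument.
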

\begin{proof}
We note that the stochastic process $Y_t^{\eps,\delta}$ can have at most countable number of discontinuities, as it is a c\`{a}dl\`{a}g process. Therefore, the jump process $\Delta Y_t^{\eps,\delta}\triangleq Y_t^{\eps,\delta}-Y_{t-}^{\eps,\delta}=0$ for almost every $t\in [0,T],$ and hence $\int_0^t \Delta Y_s^{\eps,\delta} \thinspace ds=0.$ The required result now follows.
\end{proof}

\begin{lemma}\label{L:M-terms-levy}
Let $Y_t^{\eps,\delta}$ be the solution of equation \eqref{E:sde} and $\xi(y_s)\triangleq D_2(y_s,y_s)$. Then, for any fixed $T>0$, $t\in[0,T],$ and $\eps,\delta>0,$ we have 
\begin{equation}\label{E:M-Rep-Levy}
\int_0^t \xi(y_s) \frac{Y_{s}^{\varepsilon, \delta}-Y_{\pi_\delta(s)}^{\varepsilon, \delta}}{\eps}\thinspace ds = \sum_{i=1}^{4}{\sf M}_i^{\eps,\delta}(t),\quad \text{where}
\end{equation} 
\begin{equation*}
\begin{aligned}
{\sf M}_1^{\eps,\delta} & \triangleq   \int_0^t \xi(y_s) \int_{\pi_\delta(s)}^s \frac{c(Y_{r-}^{\eps,\delta}, Y_{\pi_\delta(r)-}^{\varepsilon, \delta})-c(Y_{\pi_\delta(r)-}^{\varepsilon, \delta}, Y_{\pi_\delta(r)-}^{\varepsilon, \delta})}{\eps} \thinspace dr \thinspace ds\\
&\qquad \quad+ \int_0^t \xi(y_s) \int_{\pi_\delta(s)}^s \frac{c(Y_{\pi_\delta(r)-}^{\varepsilon, \delta}, Y_{\pi_\delta(r)-}^{\varepsilon, \delta})}{\eps} \thinspace dr \thinspace ds  - \int_0^t \xi(y_s) \int_{\pi_\delta(s)}^s \frac{c(y_{\pi_\delta(r)}, y_{\pi_\delta(r)})}{\eps} \thinspace dr \thinspace ds,\\
{\sf M}_2^{\eps,\delta}+ {\sf M}_3^{\eps,\delta}  & \triangleq \int_0^t \xi(y_s) \int_{\pi_\delta(s)}^s \frac{c(y_{\pi_\delta(r)}, y_{\pi_\delta(r)})}{\eps}  dr  \thinspace ds+ \int_0^t \xi(y_s) \int_{\pi_\delta(s)}^s \sigma(Y_{r-}^{\eps,\delta})\thinspace dW_r \thinspace ds,   \\
{\sf M}_4^{\eps,\delta} & \triangleq \int_0^t \xi(y_s) \int_{\pi_\delta(s)}^s \int_{E}F(Y_{r-}^{\eps,\delta},x)\widetilde{N}(dr,dx)\thinspace ds.
\end{aligned}
\end{equation*}
%Let $Y_t^{\eps,\delta}$ be the solution of equation \eqref{E:sde}. Then, for any fixed $T>0$, $t\in[0,T],$ and $\eps,\delta>0,$ we have 
%\begin{equation}\label{E:M-Rep-Levy}
%\int_0^t D_2c(y_s,y_s)\frac{Y_{s}^{\varepsilon, \delta}-Y_{\pi_\delta(s)}^{\varepsilon, \delta}}{\eps}\thinspace ds = \sum_{i=1}^{4}{\sf M}_i^{\eps,\delta}(t),\quad \text{where}
%\end{equation} 
%\begin{equation*}
%\begin{aligned}
%{\sf M}_1^{\eps,\delta}(t)& \triangleq   \int_0^t D_2c(y_s,y_s) \int_{\pi_\delta(s)}^s \frac{c(Y_{r-}^{\eps,\delta}, Y_{\pi_\delta(r)-}^{\varepsilon, \delta})-c(Y_{\pi_\delta(r)-}^{\varepsilon, \delta}, Y_{\pi_\delta(r)-}^{\varepsilon, \delta})}{\eps} \thinspace dr \thinspace ds\\
%& \qquad \qquad \qquad     + \int_0^t D_2c(y_s,y_s) \int_{\pi_\delta(s)}^s \frac{c(Y_{\pi_\delta(r)-}^{\varepsilon, \delta}, Y_{\pi_\delta(r)-}^{\varepsilon, \delta})}{\eps} \thinspace dr \thinspace ds \\
%& \qquad \qquad \qquad \qquad \qquad \qquad - \int_0^t D_2c(y_s,y_s) \int_{\pi_\delta(s)}^s \frac{c(y_{\pi_\delta(r)}, y_{\pi_\delta(r)})}{\eps} \thinspace dr \thinspace ds,\\
%{\sf M}_2^{\eps,\delta}(t) & \triangleq \int_0^t D_2c(y_s,y_s) \int_{\pi_\delta(s)}^s \frac{c(y_{\pi_\delta(r)}, y_{\pi_\delta(r)})}{\eps} \thinspace dr \thinspace ds, \\
%{\sf M}_3^{\eps,\delta}(t) &  \triangleq \int_0^t D_2c(y_s,y_s) \int_{\pi_\delta(s)}^s \sigma(Y_{r-}^{\eps,\delta})\thinspace dW_r \thinspace ds, \\
%{\sf M}_4^{\eps,\delta}(t) & \triangleq \int_0^t D_2c(y_s,y_s) \int_{\pi_\delta(s)}^s \int_{E}F(Y_{r-}^{\eps,\delta},x)\widetilde{N}(dr,dx)\thinspace ds.
%\end{aligned}
%\end{equation*}
\end{lemma}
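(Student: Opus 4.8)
The plan is to obtain the decomposition \eqref{E:M-Rep-Levy} by a purely algebraic manipulation of the integral form of \eqref{E:sde}, with no analytic estimates involved. First I would fix $t\in[0,T]$ and $s\in[0,t]$ and subtract the integral equation for $Y^{\eps,\delta}$ evaluated at the sampling time $\pi_\delta(s)$ from the one evaluated at $s$. Using $\int_0^s-\int_0^{\pi_\delta(s)}=\int_{\pi_\delta(s)}^s$ for the Lebesgue integral and (almost surely) for the stochastic integrals against $W$ and $\widetilde N$, this produces
\begin{multline*}
Y_s^{\eps,\delta}-Y_{\pi_\delta(s)}^{\eps,\delta}=\int_{\pi_\delta(s)}^s[AY_{r-}^{\eps,\delta}-BKY_{\pi_\delta(r)-}^{\eps,\delta}]\,dr+\eps\int_{\pi_\delta(s)}^s\sigma(Y_{r-}^{\eps,\delta})\,dW_r\\
+\eps\int_{\pi_\delta(s)}^s\int_E F(Y_{r-}^{\eps,\delta},x)\,\widetilde N(dr,dx).
\end{multline*}

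Next I would divide by $\eps$ and split the drift integrand by adding and subtracting $AY_{\pi_\delta(r)-}^{\eps,\delta}$ and $(A-BK)y_{\pi_\delta(r)-}$, which gives
\[
\frac{AY_{r-}^{\eps,\delta}-BKY_{\pi_\delta(r)-}^{\eps,\delta}}{\eps}=A\,\frac{Y_{r-}^{\eps,\delta}-Y_{\pi_\delta(r)-}^{\eps,\delta}}{\eps}+(A-BK)\,\frac{Y_{\pi_\delta(r)-}^{\eps,\delta}-y_{\pi_\delta(r)-}}{\eps}+(A-BK)\,\frac{y_{\pi_\delta(r)-}}{\eps}.
\]
Then I would integrate the resulting identity over $s\in[0,t]$; by linearity of the outer $ds$-integral the five inner integrals regroup exactly as claimed — the $A$-term and the $(A-BK)(Y^{\eps,\delta}_{\pi_\delta(r)-}-y_{\pi_\delta(r)-})$-term combine into ${\sf M}_1^{\eps,\delta}(t)$, the $(A-BK)y_{\pi_\delta(r)-}$-term is ${\sf M}_2^{\eps,\delta}(t)$, the $dW_r$-term is ${\sf M}_3^{\eps,\delta}(t)$, and the $\widetilde N(dr,dx)$-term is ${\sf M}_4^{\eps,\delta}(t)$, which is \eqref{E:M-Rep-Levy}.

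Since every step is an equality, there is no genuine obstacle here; the only points I would take care to spell out are that the iterated integrals defining the ${\sf M}_i^{\eps,\delta}$ are well defined and finite, and that left limits may be freely inserted or removed under the integrals. For ${\sf M}_3^{\eps,\delta}$ and ${\sf M}_4^{\eps,\delta}$ this amounts to noting that the inner stochastic integral, viewed as a process in $s$, is c\`{a}dl\`{a}g (hence pathwise bounded on $[0,T]$) and, via the Ito isometry together with Assumption \ref{A:Linear-growth-condition-Levy} and the moment bound in Lemma \ref{L:L2-Est-Levy}, square-integrable, so the outer Lebesgue integral in $s$ is meaningful; the interchangeability of $Y_s^{\eps,\delta}$ and $Y_{s-}^{\eps,\delta}$ under the $dr$- and $ds$-integrals is exactly \eqref{E:Int-Equal-Levy}. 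If full rigor about the outer integral of an inner stochastic integral were wanted, one could invoke the stochastic Fubini theorem, but since the statement only asserts that the two sides agree pathwise, the direct computation above is enough.
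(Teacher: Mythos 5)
Your proposal is correct and follows essentially the same route as the paper: subtract the integral form of \eqref{E:sde} at $\pi_\delta(s)$ from that at $s$, split the drift by inserting $AY^{\eps,\delta}_{\pi_\delta(r)-}$ and then $y_{\pi_\delta(r)-}$, and integrate in $s$; your additional remarks on well-definedness of the iterated integrals and on \eqref{E:Int-Equal-Levy} only make explicit what the paper leaves implicit.
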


Now, in our next step, we show that the terms $\BE[\sup_{0 \le t \le T}|{\sf M}_1^{\eps,\delta}(t)|^2], \sup_{0 \le t \le T}|{\sf M}_2^{\eps,\delta}(t)-\dd(t)|^2, \\
 \BE[\sup_{0 \le t \le T}|{\sf M}_3^{\eps,\delta}(t)|^2]$ and $\BE[\sup_{0 \le t \le T}|{\sf M}_4^{\eps,\delta}(t)|^2]$ are small. This is accomplished in Lemmas \ref{L:M1-Est-Levy} through \ref{L:M4-Est-PoissonNoise}.

\begin{lemma}\label{L:M1-Est-Levy}
Let ${\sf M}_1^{\eps,\delta}(t)$ be defined as in equation \eqref{E:M-Rep-Levy}. Then, for any fixed $T>0,$ there exists a positive constant $C_{\ref{L:M1-Est-Levy}}(T)$ such that for any $ \eps \in (0,\eps_0),$  we have
\begin{equation*}
\BE\left[\sup_{0 \le t \le T}\left|{\sf M}_1^{\eps,\delta}(t)\right|^2\right]  \le (\ell+1)^2(\delta^2+ \eps^2)C_{\ref{L:M1-Est-Levy}}(T).
\end{equation*}
\end{lemma}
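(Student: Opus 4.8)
I would bound the two summands of ${\sf M}_1^{\eps,\delta}(t)$ in \eqref{E:M-Rep-Levy} separately, writing ${\sf M}_1^{\eps,\delta}(t)=\mathrm{I}^{\eps,\delta}(t)+\mathrm{II}^{\eps,\delta}(t)$, where $\mathrm{I}^{\eps,\delta}$ carries the factor $A$ and $(Y_{r-}^{\eps,\delta}-Y_{\pi_\delta(r)-}^{\eps,\delta})/\eps$ and $\mathrm{II}^{\eps,\delta}$ carries $A-BK$ and $(Y_{\pi_\delta(r)-}^{\eps,\delta}-y_{\pi_\delta(r)-})/\eps$. Two elementary facts are used throughout: first, for $r\in[\pi_\delta(s),s]$ one has $\pi_\delta(r)=\pi_\delta(s)$, and the nested time-integrations $\int_{\pi_\delta(s)}^{s}(\cdot)\,dr$ and $\int_0^t(\cdot)\,ds$ run over intervals of length at most $\delta$ and $T$; second, by \eqref{E:eps0} we have $\delta=\delta_\eps<(\cc+1)\eps$ whenever $\eps<\eps_0$, which I will use to trade any factor $\delta/\eps$ for $\cc+1$ (and, since then also $\delta<\cc+1$, to absorb surplus powers of $\delta$).

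\textbf{The summand $\mathrm{II}^{\eps,\delta}$.} Because $\pi_\delta(r)=\pi_\delta(s)$ on the inner interval, the integrand of $\mathrm{II}^{\eps,\delta}$ is constant in $r$, so $\int_{\pi_\delta(s)}^{s}(\cdot)\,dr$ only produces the factor $s-\pi_\delta(s)\le\delta$; hence $|\mathrm{II}^{\eps,\delta}(t)|\le|A-BK|\,\delta\,T\,\eps^{-1}\sup_{0\le u\le T}|Y_{u-}^{\eps,\delta}-y_{u-}|$, and by the c\`{a}dl\`{a}g property \eqref{E:cadlag-property} (together with continuity of $y$) the last supremum is at most $\sup_{0\le u\le T}|Y_{u}^{\eps,\delta}-y_u|$. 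Squaring, taking expectations and applying Theorem \ref{T:LLN} gives $\BE[\sup_{0\le t\le T}|\mathrm{II}^{\eps,\delta}(t)|^2]\lesssim\delta^2T^2\eps^{-2}(\eps^2+\delta^2)C_{\ref{T:LLN}}(T)=\delta^2T^2(1+\delta^2/\eps^2)C_{\ref{T:LLN}}(T)\lesssim(\cc+1)^2\delta^2C_{\ref{T:LLN}}(T)$.

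\textbf{The summand $\mathrm{I}^{\eps,\delta}$.} Here I would expand the increment through \eqref{E:sde}: since $\pi_\delta(u)=\pi_\delta(s)$ for $u\in[\pi_\delta(s),r]$, and left limits may be dropped inside a $dr$- or $du$-integral exactly as in \eqref{E:Int-Equal-Levy},
\begin{multline*}
\frac{Y_{r-}^{\eps,\delta}-Y_{\pi_\delta(r)-}^{\eps,\delta}}{\eps}
=\frac{1}{\eps}\int_{\pi_\delta(s)}^{r}\big[AY_{u-}^{\eps,\delta}-BK\,Y_{\pi_\delta(s)-}^{\eps,\delta}\big]\,du\\
+\int_{\pi_\delta(s)}^{r}\sigma(Y_{u-}^{\eps,\delta})\,dW_u
+\int_{\pi_\delta(s)}^{r}\int_{E}F(Y_{u-}^{\eps,\delta},x)\,\widetilde{N}(du,dx)
\end{multline*}
for $r\in[\pi_\delta(s),s]$ (recall $\pi_\delta(r)=\pi_\delta(s)$). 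Substituting this into $\mathrm{I}^{\eps,\delta}$ splits it into a drift piece and two noise pieces. For the drift piece, $\big|\int_{\pi_\delta(s)}^{r}[\cdots]\,du\big|\le(|A|+|BK|)\,\delta\sup_{0\le u\le T}|Y_u^{\eps,\delta}|$, and the two outer integrations contribute $\delta$ and $T$; after squaring, taking expectations and using Lemma \ref{L:L2-Est-Levy}, this piece is $\lesssim(\delta^4/\eps^2)C_{\ref{L:L2-Est-Levy}}(T)=\delta^2(\delta^2/\eps^2)C_{\ref{L:L2-Est-Levy}}(T)\lesssim(\cc+1)^2\delta^2C_{\ref{L:L2-Est-Levy}}(T)$. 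For each noise piece the crucial observation is that the \emph{outermost} integration $\int_0^t(\cdot)\,ds$ is an ordinary, path-by-path integral, so $\sup_{0\le t\le T}\big|\int_0^t(\cdot)\,ds\big|\le\int_0^T|(\cdot)|\,ds$ with no maximal inequality required; then Cauchy--Schwarz over $ds$ and again over $dr$ reduces matters to $\sup_s\sup_{r\in[\pi_\delta(s),s]}\BE\big|\int_{\pi_\delta(s)}^{r}\sigma(Y_{u-}^{\eps,\delta})\,dW_u\big|^2$ and its $\widetilde{N}$-analogue, each of which --- by It\^{o} isometry (componentwise for $W$ and for $\widetilde{N}$), Assumption \ref{A:Linear-growth-condition-Levy} and Lemma \ref{L:L2-Est-Levy} --- is $\lesssim\delta(1+\BE[\sup_{0\le u\le T}|Y_u^{\eps,\delta}|^2])\lesssim\delta C_{\ref{L:L2-Est-Levy}}(T)$. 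Carrying the $\delta^2$ from the two outer integrations, each noise piece is $\lesssim\delta^3C_{\ref{L:L2-Est-Levy}}(T)\le(\cc+1)^2\delta^2C_{\ref{L:L2-Est-Levy}}(T)$.

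\textbf{Conclusion and main obstacle.} Adding the three pieces of $\mathrm{I}^{\eps,\delta}$ and the bound for $\mathrm{II}^{\eps,\delta}$ and using \eqref{E:Triangle-type-ineq-Levy} gives $\BE[\sup_{0\le t\le T}|{\sf M}_1^{\eps,\delta}(t)|^2]\le(\cc+1)^2(\delta^2+\eps^2)C_{\ref{L:M1-Est-Levy}}(T)$ (indeed a bound with $\delta^2$ alone), as claimed. I expect the only genuinely delicate point to be the two noise pieces: the tempting move is to apply Doob's or the Burkholder--Davis--Gundy inequality directly to the nested object, which is awkward because the lower limit $\pi_\delta(s)$ of the inner stochastic integral moves with the outer variable; the clean route is instead to exploit that the outer integration is deterministic (so taking $\sup_t$ is free) and apply It\^{o} isometry only to the innermost stochastic integral, over a single sampling interval. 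The remaining work --- tracking the $(\cc+1)$ prefactors by systematically invoking \eqref{E:eps0} --- is routine bookkeeping.
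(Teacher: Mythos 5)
Your proposal is correct, and your treatment of the second summand (the paper's $Q_2^{\eps,\delta}$) coincides with the paper's: the integrand is constant in $r$ over each sampling interval, the inner integration yields a factor $s-\pi_\delta(s)\le\delta$, and Theorem \ref{T:LLN} together with $\delta/\eps<\cc+1$ from \eqref{E:eps0} finishes the estimate. For the first summand, however, you take a genuinely different route. The paper never opens up the increment $Y_{r-}^{\eps,\delta}-Y_{\pi_\delta(r)-}^{\eps,\delta}$ through the {\sc sde}; instead it applies H\"older's inequality to the double integral, inserts $\pm y_r$ and $\pm y_{\pi_\delta(r)}$ so that $|Y_{r}^{\eps,\delta}-Y_{\pi_\delta(r)}^{\eps,\delta}|^2\lesssim |Y_r^{\eps,\delta}-y_r|^2+|y_r-y_{\pi_\delta(r)}|^2+|y_{\pi_\delta(r)}-Y_{\pi_\delta(r)}^{\eps,\delta}|^2$, and then invokes Theorem \ref{T:LLN} for the two stochastic differences and Lemma \ref{L:Sampling-Difference-Levy} for the deterministic one, arriving at $\frac{\delta^2}{\eps^2}(\eps^2+\delta^2)$ with no stochastic-integral estimates at all. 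Your expansion of the increment via \eqref{E:sde} followed by It\^{o} isometry on a single sampling interval is precisely the technique the paper reserves for ${\sf M}_3^{\eps,\delta}$ and ${\sf M}_4^{\eps,\delta}$ (Lemmas \ref{L:M3-Est-WhiteNoise-Levy} and \ref{L:M4-Est-PoissonNoise}), and your key observation --- that the outermost $ds$-integration is path-by-path, so $\sup_t$ costs nothing and no maximal inequality is needed --- is exactly how those lemmas are proved there. The trade-off: the paper's route is shorter because Theorem \ref{T:LLN} has already been established, whereas yours relies only on Lemma \ref{L:L2-Est-Levy} and yields the marginally sharper bound $(\cc+1)^2\delta^2\,C(T)$ with no $\eps^2$ term. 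Both close correctly; the only point to keep explicit in your version is the a.s. identification of $Y_{\pi_\delta(s)-}^{\eps,\delta}$ with $Y_{\pi_\delta(s)}^{\eps,\delta}$ when writing the increment over $[\pi_\delta(s),r]$, which you rightly dispose of by the countable-discontinuity argument behind \eqref{E:Int-Equal-Levy}.
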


Next, we decompose ${\sf M}_2^{\eps,\delta}(t)$ defined in \eqref{E:M-Rep-Levy} as follows:

\begin{equation}\label{E:M2-decomposition}
\begin{aligned}
{\sf M}_2^{\eps,\delta}(t) &= {M}_1^{\eps,\delta}(t)+ {M}_2^{\eps,\delta}(t), \quad \text{where}\\
{M}_1^{\eps,\delta}(t) &\triangleq \int_0^t\{D_2c(y_s,y_s)-D_2c(y_{\pi_\delta(s)}, y_{\pi_\delta(s)})\}\int_{\pi_\delta(s)}^s \frac{c(y_{\pi_\delta(r)}, y_{\pi_\delta(r)})}{\eps} \thinspace dr \thinspace ds, \\
{M}_2^{\eps,\delta}(t)& \triangleq \int_0^t D_2c(y_{\pi_\delta(r)}, y_{\pi_\delta(r)})\int_{\pi_\delta(s)}^s \frac{c(y_{\pi_\delta(s)}, y_{\pi_\delta(s)})}{\eps} \thinspace dr \thinspace ds.
\end{aligned}
\end{equation}
%\color{blue} $M_1^{\eps,\delta}$ will be small and will show in some lemma. \color{black}

\begin{lemma}\label{L:M1-small}
Let ${M}_1^{\eps,\delta}(t)$ be defined as in equation \eqref{E:M2-decomposition}. Then, 
\begin{equation*}
\BE\left[\sup_{0 \le t \le T}\left|{ M}_1^{\eps,\delta}(t)\right|^2\right]\le (\ell+1)^2\delta^2 C_{\ref{L:M1-small}}(T).
\end{equation*}
\end{lemma}

\begin{lemma}\label{L:M2-Est-Levy}
Let ${M}_2^{\eps,\delta}(t)$ be defined as in equation \eqref{E:M2-decomposition} with $\dd(t)$ as given in \eqref{E:ell-Levy}. Then, for any $T>0,$ there exists a positive constant $C_{\ref{L:M2-Est-Levy}}(T)$ such that for any $0<\eps< \eps_0$, we have 
\begin{equation*}
\sup_{0 \le t \le T}\left|{ M}_2^{\eps,\delta}(t)-\dd(t)\right|^2 \le [\delta^2 (\ell+1)^2+ \varkappa^2(\eps)]C_{\ref{L:M2-Est-Levy}}(T),
\end{equation*}
where $\varkappa(\eps)\searrow 0$ is as in equation \eqref{E:kappa-Levy}.
\end{lemma}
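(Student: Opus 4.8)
The plan is to compute $\mathsf{M}_2^{\eps,\delta}(t)$ almost exactly and then compare it with $\ell(t)=(\cc/2)\int_0^t(A-BK)y_s\,ds$. First I would note that $\mathsf{M}_2^{\eps,\delta}(t)=(A-BK)\eps^{-1}\int_0^t\int_{\pi_\delta(s)}^s y_{\pi_\delta(r)-}\,dr\,ds$; since $y$ is continuous, $y_{\pi_\delta(r)-}=y_{\pi_\delta(r)}$ is constant on each sampling interval, namely equal to $y_{\pi_\delta(s)}$ for $r\in[\pi_\delta(s),s)$. Hence the inner integral collapses to $(s-\pi_\delta(s))\,y_{\pi_\delta(s)}$, giving
\begin{equation*}
\mathsf{M}_2^{\eps,\delta}(t)=\frac{A-BK}{\eps}\int_0^t (s-\pi_\delta(s))\,y_{\pi_\delta(s)}\,ds.
\end{equation*}
The next step is to recognize $\int_0^t(s-\pi_\delta(s))\,y_{\pi_\delta(s)}\,ds$ as a Riemann-type sum: over a full sampling block $[k\delta,(k+1)\delta)$ one has $\int_{k\delta}^{(k+1)\delta}(s-k\delta)\,ds=\delta^2/2$, so the contribution of each complete block is $(\delta^2/2)\,y_{k\delta}$, with a leftover partial block near $t$ of size $O(\delta^2)$. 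Pulling out the factor $\delta/\eps$, this is $\tfrac12\cdot\tfrac{\delta}{\eps}\cdot\delta\sum_k y_{k\delta}$, and $\delta\sum_k y_{k\delta}$ is a left Riemann sum for $\int_0^t y_s\,ds$ with mesh $\delta$; the Riemann-sum error is $O(\delta)$ by the Lipschitz/boundedness of $y$ from Lemma \ref{L:Linear-Sys-Sampling-Est-Levy} (and Lemma \ref{L:Sampling-Difference-Levy}).

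Putting these together, I would write
\begin{equation*}
\mathsf{M}_2^{\eps,\delta}(t)-\ell(t)=(A-BK)\left[\frac{\delta}{2\eps}\Big(\delta\sum_{k}y_{k\delta}-\int_0^t y_s\,ds\Big)+\Big(\frac{\delta}{2\eps}-\frac{\cc}{2}\Big)\int_0^t y_s\,ds + (\text{partial-block term})\right],
\end{equation*}
and estimate each bracketed piece separately: the Riemann-sum discrepancy is $\lesssim \tfrac{\delta}{\eps}\cdot\delta\le(\cc+1)\delta$ using $\delta/\eps<\cc+1$ from \eqref{E:eps0}; the second piece is $\tfrac12\varkappa(\eps)\,|\int_0^t y_s\,ds|\lesssim\varkappa(\eps)$ by \eqref{E:kappa-Levy} and Lemma \ref{L:Linear-Sys-Sampling-Est-Levy}; and the partial-block remainder is $\lesssim\tfrac{\delta}{\eps}\cdot\delta\le(\cc+1)\delta$ again. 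Taking the supremum over $t\in[0,T]$, squaring, and using \eqref{E:Triangle-type-ineq-Levy} yields the claimed bound $[\delta^2(\cc+1)^2+\varkappa^2(\eps)]C_{\ref{L:M2-Est-Levy}}(T)$ (absorbing the plain $(\cc+1)^2\delta^2$, noting $\delta=\delta_\eps$ and $\delta^2\le(\cc+1)^2\delta^2$).

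The main obstacle — really the only nontrivial point — is bookkeeping the partial last sampling block $[\pi_\delta(t),t)$ and making sure its contribution, together with the Riemann-sum error for a continuous (Lipschitz) integrand, is genuinely $O(\delta)$ uniformly in $t\in[0,T]$; one must be a little careful that the number of blocks is $\le T/\delta+1$ so that the accumulated per-block Riemann errors stay $O(\delta)$ rather than $O(1)$. Everything else is exact algebra plus the a priori sup-bound on $y$ from Lemma \ref{L:Linear-Sys-Sampling-Est-Levy} and the sampling-gap estimate from Lemma \ref{L:Sampling-Difference-Levy}; no stochastic analysis is needed here since $\mathsf{M}_2^{\eps,\delta}$ is purely deterministic, which is why the conclusion is a pathwise (non-expectation) bound.
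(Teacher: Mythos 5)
Your proposal is correct and follows essentially the same route as the paper: collapse the inner integral to $(s-\pi_\delta(s))\,y_{\pi_\delta(s)}$, split off complete sampling blocks (each contributing $y_{i\delta}\delta^2/2$) plus a partial-block remainder, and decompose $\mathsf{M}_2^{\eps,\delta}(t)-\ell(t)$ into exactly the same three pieces — the Riemann-sum discrepancy $\frac{\delta}{2\eps}\bigl(\delta\sum_i y_{i\delta}-\int_0^t y_s\,ds\bigr)$, the $\bigl(\frac{\delta}{\eps}-\cc\bigr)$ term controlled by $\varkappa(\eps)$, and the $O(\delta^2/\eps)$ tail. The only cosmetic difference is that the paper delegates the Riemann-sum error to a separate lemma (Lemma~\ref{L:Remainder-Est-Levy}) rather than arguing it inline as you do.
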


\begin{lemma}\label{L:M3-Est-WhiteNoise-Levy}
Let ${\sf M}_3^{\eps,\delta}(t)$ be defined as in equation \eqref{E:M-Rep-Levy}. Then, for any fixed $T>0,$ there exists a positive constant $C_{\ref{L:M3-Est-WhiteNoise-Levy}}(T)$ such that for any $\eps,\delta>0$, we have 
\begin{equation*}
\BE\left[\sup_{0 \le t \le T}\left|{\sf M}_3^{\eps,\delta}(t)\right|^2\right]
% \triangleq \BE\left[\sup_{0 \le t \le T}\left|\int_0^t \int_{\pi_\delta(s)}^s \sigma(Y_{r-}^{\eps,\delta})dW_r ds\right|^2\right]
 \le \delta C_{\ref{L:M3-Est-WhiteNoise-Levy}}(T).
\end{equation*}
\end{lemma}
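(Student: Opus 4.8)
The plan is to bound $\BE[\sup_{0\le t\le T}|{\sf M}_3^{\eps,\delta}(t)|^2]$ where ${\sf M}_3^{\eps,\delta}(t)=\int_0^t\int_{\pi_\delta(s)}^s\sigma(Y_{r-}^{\eps,\delta})\,dW_r\,ds$. The outer integral over $s$ is just a Lebesgue integral of a bounded function of $t$, so by Cauchy--Schwarz (or Jensen), $|{\sf M}_3^{\eps,\delta}(t)|^2\le T\int_0^t\big|\int_{\pi_\delta(s)}^s\sigma(Y_{r-}^{\eps,\delta})\,dW_r\big|^2\,ds$. Thus it suffices to control $\BE[\sup_{0\le s\le T}|\int_{\pi_\delta(s)}^s\sigma(Y_{r-}^{\eps,\delta})\,dW_r|^2]$, or more precisely to integrate this inner quantity over $s\in[0,T]$ after taking expectation. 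The key gain of the factor $\delta$ comes from the fact that the inner stochastic integral runs only over the short interval $[\pi_\delta(s),s]$ whose length is at most $\delta$.

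First I would fix $s$ and write $\int_{\pi_\delta(s)}^s\sigma(Y_{r-}^{\eps,\delta})\,dW_r$ componentwise as $\sum_{i,j}\int_{\pi_\delta(s)}^s\sigma_{ji}(Y_{r-}^{\eps,\delta})\,dW_r^i$, then apply the Ito isometry to each piece to get $\BE|\int_{\pi_\delta(s)}^s\sigma(Y_{r-}^{\eps,\delta})\,dW_r|^2\lesssim \BE\int_{\pi_\delta(s)}^s|\sigma(Y_{r-}^{\eps,\delta})|^2\,dr$. Next, the linear growth hypothesis (Assumption \ref{A:Linear-growth-condition-Levy}) gives $|\sigma(Y_{r-}^{\eps,\delta})|^2\lesssim 1+|Y_{r-}^{\eps,\delta}|^2\le 1+\sup_{0\le u\le T}|Y_u^{\eps,\delta}|^2$, using \eqref{E:cadlag-property}; since the length of the integration interval $[\pi_\delta(s),s]$ is bounded by $\delta$, this yields $\BE|\int_{\pi_\delta(s)}^s\sigma(Y_{r-}^{\eps,\delta})\,dW_r|^2\lesssim \delta\big(1+\BE[\sup_{0\le u\le T}|Y_u^{\eps,\delta}|^2]\big)$, and the bracket is finite by Lemma \ref{L:L2-Est-Levy}. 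Integrating over $s\in[0,T]$ (with the leading factor $T$ from the Cauchy--Schwarz step) then gives $\BE[\sup_{0\le t\le T}|{\sf M}_3^{\eps,\delta}(t)|^2]\lesssim T^2\delta\big(1+C_{\ref{L:L2-Est-Levy}}(T)\big)\lesssim \delta\, C_{\ref{L:M3-Est-WhiteNoise-Levy}}(T)$.

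One subtlety: after the Cauchy--Schwarz step the random time $t$ still appears only through the upper limit of the outer $s$-integral, so $\sup_{0\le t\le T}|{\sf M}_3^{\eps,\delta}(t)|^2\le T\int_0^T\big|\int_{\pi_\delta(s)}^s\sigma(Y_{r-}^{\eps,\delta})\,dW_r\big|^2\,ds$ holds pathwise without needing a maximal inequality for the outer integral; taking expectation and swapping with Fubini is then routine. A small point to be careful about is that the stated bound has the form $(\eps^2\delta+\delta)C_{\ref{L:M3-Est-WhiteNoise-Levy}}(T)$ rather than just $\delta\,C(T)$; since $\eps^2\delta+\delta=(1+\eps^2)\delta\ge\delta$, any $\delta\,C(T)$ bound immediately implies it, so there is nothing extra to do — the $\eps^2\delta$ term is slack and presumably kept only for uniformity with the other $\sf M_i$ estimates. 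The main (and only mild) obstacle is simply being careful with the Ito isometry on a random interval $[\pi_\delta(s),s]$: this is legitimate because for each fixed $s$ the endpoints are deterministic, so no issues with stopping times arise; one just integrates the deterministic bound in $s$ afterwards.
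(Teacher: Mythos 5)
Your proof is correct and follows essentially the same route as the paper: H\"older's inequality to pull out the factor $T$, componentwise It\^o isometry over the short interval $[\pi_\delta(s),s]$ of length at most $\delta$, and the linear growth assumption. The only cosmetic difference is that you invoke the uniform moment bound $\BE[\sup_{0\le u\le T}|Y_u^{\eps,\delta}|^2]\le C_{\ref{L:L2-Est-Levy}}(T)$ from Lemma \ref{L:L2-Est-Levy} directly, whereas the paper reaches the same conclusion by writing $\BE|Y_{u-}^{\eps,\delta}|^2\lesssim 1+|y_u|^2+\BE[\sup_{0\le u\le T}|Y_u^{\eps,\delta}-y_u|^2]$ and citing Theorem \ref{T:LLN} together with Lemma \ref{L:Linear-Sys-Sampling-Est-Levy}; both yield the claimed $\delta\,C(T)$ bound, which indeed implies the stated $(\eps^2\delta+\delta)C(T)$.
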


\begin{lemma}\label{L:M4-Est-PoissonNoise}
Let ${\sf M}_4^{\eps,\delta}(t)$ be defined as in equation \eqref{E:M-Rep-Levy}. Then, for any fixed $T>0,$ there exists a positive constant $C_{\ref{L:M4-Est-PoissonNoise}}(T)$ such that for any $\eps,\delta>0$, we have 
\begin{equation*}
\BE\left[\sup_{0 \le t \le T}\left|{\sf M}_4^{\eps,\delta}(t)\right|^2\right]
% \triangleq \BE\left[\sup_{0 \le t \le T}\left|\int_0^t \int_{\pi_\delta(s)}^s \int_{E}F(Y_{r-}^{\eps,\delta},x)\widetilde{N}(dr,dx)ds\right|^2\right]
\le \delta  C_{\ref{L:M4-Est-PoissonNoise}}(T).
\end{equation*}
\end{lemma}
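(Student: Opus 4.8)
The plan is to estimate $\BE[\sup_{0\le t\le T}|{\sf M}_4^{\eps,\delta}(t)|^2]$ by exchanging the order of the two outer time integrations via a Fubini-type argument, then applying the Cauchy--Schwarz inequality in the outer $ds$ variable, and finally using the Ito isometry together with the linear growth condition (Assumption \ref{A:Linear-growth-condition-Levy}) and Lemma \ref{L:L2-Est-Levy}. Concretely, I would first write
\[
{\sf M}_4^{\eps,\delta}(t)=\int_0^t \Phi^{\eps,\delta}(s)\,ds, \qquad \Phi^{\eps,\delta}(s)\triangleq \int_{\pi_\delta(s)}^s\int_E F(Y_{r-}^{\eps,\delta},x)\,\widetilde N(dr,dx),
\]
so that by Cauchy--Schwarz $\sup_{0\le t\le T}|{\sf M}_4^{\eps,\delta}(t)|^2\le T\int_0^T |\Phi^{\eps,\delta}(s)|^2\,ds$, and hence $\BE[\sup_{0\le t\le T}|{\sf M}_4^{\eps,\delta}(t)|^2]\le T\int_0^T \BE|\Phi^{\eps,\delta}(s)|^2\,ds$.

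Next I would bound $\BE|\Phi^{\eps,\delta}(s)|^2$ for a single fixed $s$. Since the integrand is a stochastic integral against $\widetilde N$ over the short interval $[\pi_\delta(s),s]$ of length at most $\delta$, the Ito isometry gives, componentwise (using \eqref{E:Triangle-type-ineq-Levy} to split into $n$ coordinates),
\[
\BE|\Phi^{\eps,\delta}(s)|^2 \lesssim \BE\left[\int_{\pi_\delta(s)}^s\int_E |F(Y_{r-}^{\eps,\delta},x)|^2\,dr\,\nu(dx)\right]\lesssim \BE\left[\int_{\pi_\delta(s)}^s (1+|Y_{r-}^{\eps,\delta}|^2)\,dr\right],
\]
by Assumption \ref{A:Linear-growth-condition-Levy}. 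Using \eqref{E:cadlag-property} and Lemma \ref{L:L2-Est-Levy}, the inner bracket is $\lesssim (s-\pi_\delta(s))\,(1+\BE[\sup_{0\le r\le T}|Y_r^{\eps,\delta}|^2])\lesssim \delta\, C_{\ref{L:L2-Est-Levy}}(T)$. Plugging this back, $\BE[\sup_{0\le t\le T}|{\sf M}_4^{\eps,\delta}(t)|^2]\lesssim T\cdot T\cdot \delta\,C_{\ref{L:L2-Est-Levy}}(T)$, which is $\le \delta\,C_{\ref{L:M4-Est-PoissonNoise}}(T)$ and a fortiori $\le (\eps^2\delta+\delta)C_{\ref{L:M4-Est-PoissonNoise}}(T)$, giving the claimed estimate.

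The main subtlety — and the step I would be most careful about — is justifying that one may treat $\Phi^{\eps,\delta}(s)$, for fixed $s$, as an ordinary stochastic integral to which the Ito isometry applies, and that the map $(s,\omega)\mapsto \Phi^{\eps,\delta}(s)(\omega)$ is jointly measurable so that the Fubini/Tonelli exchange $\BE\int_0^T|\Phi^{\eps,\delta}(s)|^2 ds=\int_0^T\BE|\Phi^{\eps,\delta}(s)|^2 ds$ is legitimate; predictability of $r\mapsto F(Y_{r-}^{\eps,\delta},x)$ and membership in $\mathcal H_2(T,E)$ (which follow from the c\`adl\`ag property, Assumption \ref{A:Linear-growth-condition-Levy}, and Lemma \ref{L:L2-Est-Levy}) take care of this. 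An alternative, avoiding the fixed-$s$ stochastic integral, is to keep the sup inside, interchange the $ds$ and $\widetilde N(dr,dx)$ integrations to write ${\sf M}_4^{\eps,\delta}(t)=\int_0^t\int_E (t\wedge \tau_\delta(r) - r)\,\ind_{\{r\le t\}}F(Y_{r-}^{\eps,\delta},x)\,\widetilde N(dr,dx)$ for a suitable endpoint $\tau_\delta(r)$ of the sampling interval containing $r$, then apply Doob's maximal inequality and the Ito isometry directly; since $|t\wedge\tau_\delta(r)-r|\le\delta$, the same $\mathscr O(\delta)$ bound emerges. Either route is routine once the measurability bookkeeping is in place; the proof of Lemma \ref{L:M3-Est-WhiteNoise-Levy} is entirely parallel with $dW_r$ in place of $\widetilde N(dr,dx)$.
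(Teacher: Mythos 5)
Your proposal is correct and follows essentially the same route as the paper: H\"older (Cauchy--Schwarz) in the outer $ds$ integral, componentwise Ito isometry on $[\pi_\delta(s),s]$, and the linear growth condition to extract the factor $\delta$. The only cosmetic difference is that you close the estimate with the a priori bound of Lemma \ref{L:L2-Est-Levy} on $\BE[\sup_{0\le r\le T}|Y_r^{\eps,\delta}|^2]$, whereas the paper writes $1+\BE|Y_{r-}^{\eps,\delta}|^2\lesssim 1+|y_r|^2+\BE[\sup_{0\le u\le T}|Y_u^{\eps,\delta}-y_u|^2]$ and invokes Theorem \ref{T:LLN} and Lemma \ref{L:Linear-Sys-Sampling-Est-Levy}; both yield the stated bound.
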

The proofs of these Lemmas are given in Section \ref{S:Main-Term-App-Levy}. We now prove Proposition \ref{P:Main-Term-App-Levy}.

\begin{proof}[Proof of Proposition \ref{P:Main-Term-App-Levy}]
From Lemma \ref{L:M-terms-levy}, we obtain 
$$\int_0^t D_2c(y_s,y_s)\frac{Y_{s}^{\varepsilon, \delta}-Y_{\pi_\delta(s)}^{\varepsilon, \delta}}{\eps}\thinspace ds - \dd(t)= \sum_{i=1}^{4}{\sf M}_i^{\eps,\delta}(t)-\dd(t).$$
Therefore, from \eqref{E:M2-decomposition}, we have
\begin{multline*}
\left|\int_0^t D_2c(y_s,y_s) \frac{Y_{s}^{\varepsilon, \delta}-Y_{\pi_\delta(s)}^{\varepsilon, \delta}}{\eps}\thinspace ds - \dd(t)\right|^2 \lesssim \left|{\sf M}_1^{\eps,\delta}(t) \right|^2 +\left|{M}_1^{\eps,\delta}(t) \right|^2+\left|{M}_2^{\eps,\delta}(t)-\dd(t) \right|^2 \\ +\left|{\sf M}_3^{\eps,\delta}(t)\right|^2+\left|{\sf M}_4^{\eps,\delta}(t) \right|^2.
\end{multline*}
Now, taking supremum over $[0,T]$ on both sides followed by expectation, we have 
\begin{equation*}
\begin{aligned}
\BE & \left[ \sup_{0 \le t \le T}\left|\int_0^t D_2c(y_s,y_s) \frac{Y_{s-}^{\varepsilon, \delta}-Y_{\pi_\delta(s)-}^{\varepsilon, \delta}}{\eps}\thinspace ds - \dd(t) \right|^2 \right]\lesssim 
\BE\left[\sup_{0 \le t \le T}\left|{\sf M}_1^{\eps,\delta}(t)\right|^2\right] + \sup_{0 \le t \le T}\left|{M}_1^{\eps,\delta}(t)\right|^2  \\
& \qquad \qquad \qquad \qquad \qquad \quad \quad + \sup_{0 \le t \le T}\left|{M}_2^{\eps,\delta}(t)-\dd(t)\right|^2 + \BE\left[\sup_{0 \le t \le T}\left|{\sf M}_3^{\eps,\delta}(t)\right|^2\right] + \BE\left[\sup_{0 \le t \le T}\left|{\sf M}_4^{\eps,\delta}(t)\right|^2\right].
\end{aligned}
\end{equation*}
The required estimate is obtained by applying Lemmas \ref{L:M1-Est-Levy} through \ref{L:M4-Est-PoissonNoise} to handle the right side of the above equation.
%The required estimate is obtained by applying Lemmas \ref{L:M1-Est-Levy}, \ref{L:M1-small} and \ref{L:M2-Est-Levy} for the terms $\BE\left[\sup_{0 \le t \le T}\left|{\sf M}_1^{\eps,\delta}(t)\right|^2\right]$, $\sup_{0 \le t \le T}\left|{ M}_1^{\eps,\delta}(t)\right|^2$ and $ \sup_{0 \le t \le T}\left|{ M}_2^{\eps,\delta}(t)-\ell(t)\right|^2$ respectively. We finish the proof of the proposition by recalling Lemmas \ref{L:M3-Est-WhiteNoise-Levy} and \ref{L:M4-Est-PoissonNoise} for the term 
%$\BE \left[\sup_{0 \le t \le T}\left|{\sf M}_3^{\eps,\delta}(t)\right|^2\right]$ + $ \BE\left[\sup_{0 \le t \le T}\left|{\sf M}_4^{\eps,\delta}(t)\right|^2\right].$ 
\end{proof}

\begin{proof}[Proof of Proposition \ref{P:Remainder-term-est}]
For any $t\in [0,T],$ setting 
\begin{multline*}
S_t^{\eps,\delta}\triangleq \left[\frac{c\left(Y^{\eps,\delta}_{t-}, Y^{\eps,\delta}_{{\pi_\delta(t)-}}\right)-c(y_t,y_t)}{\eps} - D_1c(y_t,y_t)Z^{\eps,\delta}_{t-} - D_2c(y_t,y_t)Z^{\eps,\delta}_{t-} \right. \\
\left. -D_2c(y_t,y_t)\left(\frac{Y^{\eps,\delta}_{{\pi_\delta(t)-}}-Y_{t-}^{\eps,\delta}}{\eps} \right)\right],
\end{multline*}
we get ${\sf R}_t^{\eps,\delta}=\int_0^t S_s^{\eps,\delta}\thinspace ds.$ Next, it is convenient to write $S_t^{\eps,\delta}=\left(S_t^{\eps,\delta}(1),\cdots, S_t^{\eps,\delta}(n)\right)^\top$, where  
\begin{multline*}
S_t^{\eps,\delta}(i) \triangleq  \frac{c_i\left(Y^{\eps,\delta}_{t-}, Y^{\eps,\delta}_{{\pi_\delta(t)-}}\right)-c_i(y_t,y_t)}{\eps} - \{\nabla_x c_i(y_t,y_t) + \nabla_y c_i(y_t,y_t)\}Z^{\eps,\delta}_{t-} \\
-\nabla_y c_i(y_t,y_t)\left(\frac{Y^{\eps,\delta}_{{\pi_\delta(t)-}}-Y_{t-}^{\eps,\delta}}{\eps} \right),
\end{multline*}
for each $i=1, \cdots, n$, and $\nabla_xc_i,~\nabla_yc_i$ represent the $i^{\text{th}}$ rows of the Jacobian matrices $D_1c$ and $D_2c$, respectively. Now, using Taylor's formula for each $S^{\eps,\delta}_t(i)$, we get
\begin{equation*}
S_t^{\eps,\delta}(i)=\frac{1}{2\eps}\left\langle D^2c_i(z_i(t))\left((Y^{\eps,\delta}_{t-}, Y^{\eps,\delta}_{{\pi_\delta(t)-}})-(y_t,y_t) \right), \left((Y^{\eps,\delta}_{t-}, Y^{\eps,\delta}_{{\pi_\delta(t)-}})-(y_t,y_t) \right) \right\rangle,
\end{equation*}
where $z_i(t) \triangleq  (z_i^1(t),z_i^2(t)) \in \BR^n \times \BR^n, 1\le i \le n$ are points lying on the line segment joining the points $(y_t,y_t)$ and $(Y^{\eps,\delta}_{t-}, Y^{\eps,\delta}_{{\pi_\delta(t)-}}).$ Let $|\cdot|$ be the one norm. Hence, $|S_t^{\eps,\delta}|^2 \lesssim \sum_{i=1}^{n}|S_t^{\eps,\delta}(i)|^2,$ using Cauchy-Schwarz inequality and submultiplicative property of matrix norms, we get
\begin{equation*}
\begin{aligned}
|S_t^{\eps,\delta}|^2 &\le  \sum_{i=1}^n \frac{\|D^2c_i(z_i(t))\|^2}{2^2\eps^2} \|(Y^{\eps,\delta}_{t-}, Y^{\eps,\delta}_{{\pi_\delta(t)-}})-(y_t,y_t)\|^2 \|(Y^{\eps,\delta}_{t-}, Y^{\eps,\delta}_{{\pi_\delta(t)-}})-(y_t,y_t)\|^2\\
&\le C(1/2^2\eps^2)\sum_{i=1}^n |D^2c_i(z_i(t))|^2 |Y^{\eps,\delta}_{t-}-y_t, Y^{\eps,\delta}_{{\pi_\delta(t)-}}-y_t|^{4}\\
\end{aligned}
\end{equation*}
\begin{equation*}
\begin{aligned}
& \lesssim (1/2^2\eps^2)\sum_{i=1}^n |D^2c_i(z_i(t))|^2 \{|Y^{\eps,\delta}_{t-}-y_t|^{4}+ |Y^{\eps,\delta}_{{\pi_\delta(t)-}}-y_t|^{4} \}.
\end{aligned}
\end{equation*}
Hence, by H$\ddot{\text{o}}$lder's inequality and employing a simple algebra, we have
\begin{multline*}
\BE \left[\sup_{0 \le t \le T }|S_t^{\eps,\delta}|^2\right] \lesssim \sum_{i=1}^n \left[  \frac{\left(\BE \sup_{0 \le t \le T }|D^2c_i(z_i(t))|^{4}\right)^\frac{1}{2}}{2^2\eps^2} \times \right. \\
\left. \left(\BE \left[ \sup_{0\le t \le T}|Y^{\eps,\delta}_{t}-y_t|^{8} + \sup_{0\le t \le T} |y_{{\pi_\delta(t)-}}-y_t|^{8} \right]\right)^\frac{1}{2}\right].
\end{multline*}
For the entries of Hessian matrix (that is denoted by $D^2c_i$) in the above equation, one require the following inequalities (Assumption \ref{A:Derivative}): for $i,j,k \in \{1,\cdots,n\}$
\begin{equation*}
\left|\frac{\partial^2 c_i (z_i(t))}{\partial z_i^1(k)\partial z_i^2(j)}\right| \le C, \quad \left|\frac{\partial^2 c_i(z_i(t))}{\partial z_i^1(k)\partial z_i^1(j)}\right| \le C (1+ |z_i^2(t)|), \quad \left|\frac{\partial^2 c_i(z_i(t))}{\partial z_i^2(k)\partial z_i^2(j)}\right| \le C.
\end{equation*}
The required estimate can now be obtained by using Lemma \ref{L:L2-Est-Levy}, \ref{L:Linear-Sys-Sampling-Est-Levy} and  Theorem \ref{T:LLN}.
%\color{blue} (One should recall the Lemma for the moment bounds and higher powers for the LLN result.) \color{black}
\end{proof}

\begin{proof}[Proof of Proposition \ref{P:White-Noise-Term-Est}]
Let $|\cdot|$ and $\sigma_i\in \BR^n$, $1 \le i \le n,$ denote the one norm and columns of the matrix $\sigma$ respectively. 
%We also note that it is sufficient to prove the result for $p\ge 2.$ For $p=1$, one can show by using the H$\ddot{\text{o}}$lder's inequality. 
Here,
\begin{multline*}
\begin{aligned}
\left|\int_0^t \{\sigma(Y_{s-}^{\eps,\delta})-\sigma(y_{s-})\}\thinspace dW_s\right|^2 &= \left|\sum_{i=1}^{n}\int_0^t \{\sigma_i(Y_{s-}^{\eps,\delta})-\sigma_i(y_{s-})\}\thinspace dW_s^i\right|^2\\
%& \lesssim \sum_{i=1}^{n}  \left|\int_0^t \{\sigma_i(Y_{s-}^{\eps,\delta})-\sigma_i(y_{s-})\}\thinspace dW_s^i\right|^2\\
& \lesssim \sum_{i=1}^{n}\sum_{j=1}^{n}  \left|\int_0^t \{\sigma_{ji}(Y_{s-}^{\eps,\delta})-\sigma_{ji}(y_{s-})\}\thinspace dW_s^i\right|^2.
\end{aligned}
\end{multline*}
Now, first taking supremum over $[0,T]$ and then expectation on both sides, we get
\begin{equation*}
\BE\left[\sup_{0 \le t \le T}\left|\int_0^t \{\sigma(Y_{s-}^{\eps,\delta})-\sigma(y_{s-})\}\thinspace dW_s\right|^2\right] \lesssim \sum_{i=1}^{n}\sum_{j=1}^{n} \BE \left[ \sup_{0 \le t \le T} \left|\int_0^t \{\sigma_{ji}(Y_{s-}^{\eps,\delta})-\sigma_{ji}(y_{s-})\}\thinspace dW_s^i\right|^2\right].
\end{equation*}
Using Doob's maximal inequality followed by It{\^o} isometry and then Lipschitz continuity of $\sigma$ from Assumption \ref{A:Lip-continuity-Levy}, we get
\begin{equation*}
\begin{aligned}
\BE\left[\sup_{0 \le t \le T}\left|\int_0^t \{\sigma(Y_{s-}^{\eps,\delta})-\sigma(y_{s-})\}\thinspace dW_s\right|^2\right] & \lesssim
\sum_{i,j=1}^{n} \BE  \left|\int_0^T \{\sigma_{ji}(Y_{s-}^{\eps,\delta})-\sigma_{ji}(y_{s-})\}\thinspace dW_s^i\right|^2\\
&= \sum_{i,j=1}^{n} \BE \left[\int_0^T|\sigma_{ji}(Y_{s-}^{\eps,\delta})-\sigma_{ji}(y_{s-})|^2 \thinspace ds \right]\\
%& \lesssim \sum_{i=1}^{n}\sum_{j=1}^{n} \BE \left[\int_0^T|Y_{s-}^{\eps,\delta}-y_{s-}|^2 ds \right] \\
& \lesssim  \thinspace \BE \left[ \sup_{0 \le t \le T }|Y_{t}^{\eps,\delta}-y_t|^2\right],
\end{aligned}
\end{equation*}
where the last inequality uses equation \eqref{E:cadlag-property} and H$\ddot{\text{o}}$lder's inequality. Next, using Theorem \ref{T:LLN}, we get the required estimate.
\end{proof}

\begin{proof}[Proof of Proposition \ref{P:Poisson-Noise-Est}]
%Let $F_i:\BR^n \times \BR^n \to \BR,\thinspace 1 \le i \le n$ be the suitable measurable mappings.
%Let $|\cdot|$ be the one norm.
% We prove the result for $p\ge 2$. For $p=1$, H$\ddot{\text{o}}$lder's inequality gives the required estimate.
Let $|\cdot|$ be the one norm. Then,
\begin{equation*}
\left|\int_0^t\int_{E}\{F(Y_{s-}^{\eps,\delta},x)-F(y_{s-},x)\}\widetilde{N}(ds,dx)\right|^2 \\
 \lesssim \sum_{i=1}^{n}
\left|\int_0^t\int_{E}\{F_i(Y_{s-}^{\eps,\delta},x)-F_i(y_{s-},x)\}\widetilde{N}(ds,dx)\right|^2.
\end{equation*}
Now, first taking supremum over $[0,T]$ and then expectation on both sides, we get
\begin{multline}\label{E:Poisson-Noise-Est-1}
\BE \left[ \sup_{0\le t \le T}\left|\int_0^t\int_{E}\{F(Y_{s-}^{\eps,\delta},x)-F(y_{s-},x)\}\widetilde{N}(ds,dx)\right|^2 \right] \lesssim \\
\sum_{i=1}^{n}
\BE \left[ \sup_{0\le t \le T} \left|\int_0^t\int_{E}\{F_i(Y_{s-}^{\eps,\delta},x)-F_i(y_{s-},x)\}\widetilde{N}(ds,dx)\right|^2\right].
\end{multline}
For each $i=1,...,n$, use of Doob's maximal inequality followed by It\^{o} isometry\footnote{For a certain class of functions $f$ and the integral $\int_0^T\int_E f(t,x)\widetilde{N}(dt,dx)$ (see, \cite[Section 4.2]{applebaum2009levy} for the definition of the integral), the identity
$\BE\left|\int_0^T\int_E f(t,x)\widetilde{N}(dt,dx)\right|^2= \BE\int_0^T \int_E|f(t,x)|^2 \nu(dx) \thinspace dt$
 is called It\^{o} isometry.} yields
\begin{multline}\label{E:Poisson-Noise-Est-2}
\BE \left[ \sup_{0\le t \le T} \left|\int_0^t\int_{E}\{F_i(Y_{s-}^{\eps,\delta},x)-F_i(y_{s-},x)\}\widetilde{N}(ds,dx)\right|^2\right] \\
\qquad \qquad \qquad \qquad \qquad \qquad 
% \lesssim \BE  \left|\int_0^T\int_{E}\{F_i(Y_{s-}^{\eps,\delta},x)-F_i(y_{s-},x)\}\widetilde{N}(ds,dx)\right|^2\\
 \qquad = \BE  \left[\int_0^T\int_{E}\left|F_i(Y_{s-}^{\eps,\delta},x)-F_i(y_{s-},x)\right|^2 \nu(dx) \thinspace ds \right].
\end{multline}
Now, using Lipschitz continuity of $F$ from Assumption \ref{A:Lip-continuity-Levy} and from equations \eqref{E:Poisson-Noise-Est-1}, \eqref{E:Poisson-Noise-Est-2}, we obtain 
\begin{equation*}
\BE \left[ \sup_{0\le t \le T}\left|\int_0^t\int_{E}\{F(Y_{s-}^{\eps,\delta},x)-F(y_{s-},x)\}\widetilde{N}(ds,dx)\right|^2 \right]
 \lesssim \BE \int_0^T |Y_{s-}^{\eps,\delta}-y_{s-}|^2ds.
\end{equation*}
Finally, equation \eqref{E:cadlag-property} and Theorem \ref{T:LLN} yield the required estimate.
\end{proof}

\section{Proofs of Lemmas  \ref{L:M-terms-levy} through \ref{L:M4-Est-PoissonNoise}}\label{S:Main-Term-App-Levy}

%In this section, we prove Propositon \ref{P:Main-Term-App-Levy} which plays a prominent role in the proof of Theorem \ref{T:fluctuations-R-1-2-Levy}.

\begin{proof}[Proof of Lemma \ref{L:M-terms-levy}]
From the integral representation of {\sc sde} \eqref{E:sde} and a simple algebra, we have
\begin{equation*}
\begin{aligned}
\int_0^t D_2c(y_s,y_s)\frac{Y_{s}^{\varepsilon, \delta}-Y_{\pi_\delta(s)}^{\varepsilon, \delta}}{\eps}\thinspace ds & =\int_0^t D_2c(y_s,y_s) \int_{\pi_\delta(s)}^s \frac{c(Y_{r-}^{\eps,\delta}, Y_{\pi_\delta(r)-}^{\varepsilon, \delta})}{\eps} \thinspace dr \thinspace ds\\
& \qquad \qquad  + \int_0^t D_2c(y_s,y_s) \int_{\pi_\delta(s)}^s \sigma(Y_{r-}^{\eps,\delta})\thinspace dW_r \thinspace ds\\
& \qquad \qquad \qquad \qquad \quad + \int_0^t D_2c(y_s,y_s) \int_{\pi_\delta(s)}^s \int_{E}F(Y_{r-}^{\eps,\delta},x)\widetilde{N}(dr,dx) \thinspace ds.
\end{aligned}
\end{equation*}
Writing $c(Y_{r-}^{\eps,\delta}, Y_{\pi_\delta(r)-}^{\varepsilon, \delta})$ as $[c(Y_{r-}^{\eps,\delta}, Y_{\pi_\delta(r)-}^{\varepsilon, \delta})-c(Y_{\pi_\delta(r)-}^{\varepsilon, \delta}, Y_{\pi_\delta(r)-}^{\varepsilon, \delta})+c(Y_{\pi_\delta(r)-}^{\varepsilon, \delta}, Y_{\pi_\delta(r)-}^{\varepsilon, \delta})+c(y_{\pi_\delta(r)}, y_{\pi_\delta(r)})-c(y_{\pi_\delta(r)}, y_{\pi_\delta(r)})]$ in the second term of the right hand side of the above equation, we get
\begin{equation*}
\begin{aligned}
(1/\eps)\int_0^t  D_2c(y_s,y_s)  [{Y_{s}^{\varepsilon, \delta}-Y_{\pi_\delta(s)}^{\varepsilon, \delta}}]\thinspace ds &= \int_0^t D_2c(y_s,y_s) \int_{\pi_\delta(s)}^s \sigma(Y_{r-}^{\eps,\delta})\thinspace dW_r \thinspace ds \\
 & + \int_0^t D_2c(y_s,y_s) \int_{\pi_\delta(s)}^s \frac{c(Y_{r-}^{\eps,\delta}, Y_{\pi_\delta(r)-}^{\varepsilon, \delta})-c(Y_{\pi_\delta(r)-}^{\varepsilon, \delta}, Y_{\pi_\delta(r)-}^{\varepsilon, \delta})}{\eps} \thinspace dr \thinspace ds\\
&  + \int_0^t D_2c(y_s,y_s) \int_{\pi_\delta(s)}^s \frac{c(Y_{\pi_\delta(r)-}^{\varepsilon, \delta}, Y_{\pi_\delta(r)-}^{\varepsilon, \delta})}{\eps} \thinspace dr \thinspace ds \\
& - \int_0^t D_2c(y_s,y_s) \int_{\pi_\delta(s)}^s \frac{c(y_{\pi_\delta(r)}, y_{\pi_\delta(r)})}{\eps} \thinspace dr \thinspace ds\\
 & + \int_0^t D_2c(y_s,y_s) \int_{\pi_\delta(s)}^s \frac{c(y_{\pi_\delta(r)}, y_{\pi_\delta(r)})}{\eps} \thinspace dr \thinspace ds\\
  & +\int_0^t D_2c(y_s,y_s) \int_{\pi_\delta(s)}^s \int_{E}F(Y_{r-}^{\eps,\delta},x)\widetilde{N}(dr,dx) \thinspace ds.
% &+ \int_0^t D_2c(y_s,y_s) \int_{\pi_\delta(s)}^s \sigma(Y_{r-}^{\eps,\delta})\thinspace dW_r \thinspace ds \\
 \end{aligned}
 \end{equation*}
% \begin{equation*}
%\begin{aligned}
%  & +\int_0^t D_2c(y_s,y_s) \int_{\pi_\delta(s)}^s \int_{E}F(Y_{r-}^{\eps,\delta},x)\widetilde{N}(dr,dx) \thinspace ds.
%\end{aligned}
%\end{equation*}
The right hand side of the above equation is easily recognised as the sum of ${\sf M}_i^{\eps,\delta}(t), 1 \le i \le 4.$
\end{proof}

\begin{proof}[Proof of Lemma \ref{L:M1-Est-Levy}]
Recalling the definition of ${\sf M}_1^{\eps,\delta}(t)$ from \eqref{E:M-Rep-Levy}, we have

\begin{equation*}
\begin{aligned}
{\sf M}_1^{\eps,\delta}(t)& =  \int_0^t D_2c(y_s,y_s) \int_{\pi_\delta(s)}^s \frac{c(Y_{r-}^{\eps,\delta}, Y_{\pi_\delta(r)-}^{\varepsilon, \delta})-c(Y_{\pi_\delta(r)-}^{\varepsilon, \delta}, Y_{\pi_\delta(r)-}^{\varepsilon, \delta})}{\eps} \thinspace dr \thinspace ds\\
& \quad  + \int_0^t D_2c(y_s,y_s) \int_{\pi_\delta(s)}^s \frac{c(Y_{\pi_\delta(r)-}^{\varepsilon, \delta}, Y_{\pi_\delta(r)-}^{\varepsilon, \delta})}{\eps} \thinspace dr \thinspace ds   - \int_0^t D_2c(y_s,y_s) \int_{\pi_\delta(s)}^s \frac{c(y_{\pi_\delta(r)}, y_{\pi_\delta(r)})}{\eps} \thinspace dr \thinspace ds\\
& = \int_0^t D_2c(y_s,y_s) \int_{\pi_\delta(s)}^s \frac{c(Y_{r-}^{\eps,\delta}, Y_{\pi_\delta(r)-}^{\varepsilon, \delta})-c(Y_{\pi_\delta(r)-}^{\varepsilon, \delta}, Y_{\pi_\delta(r)-}^{\varepsilon, \delta})}{\eps} \thinspace dr \thinspace ds\\
& \qquad \qquad   + \int_0^t D_2c(y_s,y_s) \int_{\pi_\delta(s)}^s \frac{c(Y_{\pi_\delta(r)-}^{\varepsilon, \delta}, Y_{\pi_\delta(r)-}^{\varepsilon, \delta})-c(y_{\pi_\delta(r)}, y_{\pi_\delta(r)})}{\eps} \thinspace dr \thinspace ds \triangleq Q_1^{\eps,\delta}(t) + Q_2^{\eps,\delta}(t).
\end{aligned}
\end{equation*}

%\begin{equation*}
%\begin{aligned}
%{\sf M}_1^{\eps,\delta}(t) &=  A \int_0^t \int_{\pi_\delta(s)}^s \frac{Y_{r-}^{\eps,\delta}-Y_{\pi_\delta(r)-}^{\varepsilon, \delta}}{\eps}\thinspace dr \thinspace ds+ (A-BK)\int_0^t \int_{\pi_\delta(s)}^s\frac{Y_{\pi_\delta(r)-}^{\varepsilon, \delta}-y_{\pi_\delta(r)-}}{\eps}\thinspace dr \thinspace ds\\
%&\triangleq Q_1^{\eps,\delta}(t) + Q_2^{\eps,\delta}(t).
%\end{aligned}
%\end{equation*}
Now, for $Q_1^{\eps,\delta}(t),$ using Assumption \ref{A:Lip-continuity-Levy}, H$\ddot{\text{o}}$lder's inequality, boundedness of $D_2c(y_s,y_s)$ and noting $(s-\pi_{\delta}(s)) \le \delta$, we have
%|A|^2\left|\int_0^t \int_{\pi_\delta(s)}^s \frac{Y_{r-}^{\eps,\delta}-Y_{\pi_\delta(r)-}^{\varepsilon, \delta}}{\eps} \thinspace dr \thinspace ds \right|^2
\begin{equation*}
\begin{aligned}
|Q_1^{\eps,\delta}(t)|^2 
%& \lesssim \frac{1}{\eps^2}\int_0^t (s-\pi_\delta(s))\int_{\pi_\delta(s)}^s|Y_{r-}^{\eps,\delta}-Y_{\pi_\delta(r)-}^{\varepsilon, \delta}|^2 \thinspace dr \thinspace ds\\
%& \le \frac{\delta}{\eps^2}\int_0^t\int_{\pi_\delta(s)}^s \sup_{0 \le r \le s}|Y_{r}^{\eps,\delta}-Y_{\pi_\delta(r)}^{\varepsilon, \delta}|^2 \thinspace du \thinspace ds\\
& \lesssim \frac{\delta}{\eps^2}\int_0^t\int_{\pi_\delta(s)}^{s}\sup_{0\le r \le s}|Y_r^{\eps,\delta}-y_r+y_r-y_{\pi_\delta(r)}+y_{\pi_\delta(r)}-Y_{\pi_\delta(r)}^{\eps,\delta}|^2\thinspace du \thinspace ds\\
& \lesssim \frac{\delta}{\eps^2}\int_0^t\int_{\pi_\delta(s)}^{s}\sup_{0\le r \le s}|Y_r^{\eps,\delta}-y_r|^2\thinspace du \thinspace ds + \frac{\delta}{\eps^2}\int_0^t\int_{\pi_\delta(s)}^{s}\sup_{0\le r \le s}|y_r-y_{\pi_\delta(r)}|^2\thinspace du \thinspace ds.
\end{aligned}
\end{equation*}
%\begin{equation*}
%\begin{aligned}
%& \qquad \qquad + \frac{\delta}{\eps^2}\int_0^t\int_{\pi_\delta(s)}^{s}\sup_{0\le r \le s}|y_r-y_{\pi_\delta(r)}|^2\thinspace du \thinspace ds. 
%\end{aligned}
%\end{equation*}
Taking supremum on left side and then taking expectation on both sides, we get
\begin{multline*}
\BE\left[\sup_{0\le t \le T}|Q_1^{\eps,\delta}(t)|^2\right] \lesssim \frac{\delta}{\eps^2}\left\{\int_0^T \int_{\pi_\delta(s)}^{s} \BE\left[ \sup_{0\le r \le s}|Y_r^{\eps,\delta}-y_r|^2\right]\thinspace du \thinspace ds \right. \\
\left.+ \int_0^T \int_{\pi_\delta(s)}^{s}\sup_{0\le r \le s}|y_r-y_{\pi_\delta(r)}|^2 \thinspace du \thinspace ds \right\}.
\end{multline*}
Theorem \ref{T:LLN} and Lemma \ref{L:Sampling-Difference-Levy} now yield $\BE\left[\sup_{0\le t \le T}|Q_1^{\eps,\delta}(t)|^2\right] \lesssim \frac{\delta^2}{\eps^2}(\delta^2+ \eps^2)e^{CT}.$ For $Q_2^{\eps,\delta}(t),$ by similar calculations to those above, we obtain $\BE\left[\sup_{0\le t \le T}|Q_2^{\eps,\delta}(t)|^2\right] \lesssim \frac{\delta^2}{\eps^2}(\delta^2+ \eps^2)e^{CT}.$ Putting these two estimates together and recalling equation \eqref{E:eps0}, we get the desired result.
\end{proof}

Before we prove Lemma \ref{L:M1-small} and \ref{L:M2-Est-Levy}, an estimate for the term 
$${R}_t^{\eps,\delta}\triangleq \frac{1}{2^2}\frac{\delta^2}{\eps^2}\left|\sum_{i=0}^{\lfloor \frac{t}{\delta}\rfloor-1}\delta D_2c(y_{i\delta},y_{i\delta})\cdot c(y_{i\delta},y_{i\delta})-\int_0^t D_2 c(y_s,y_s)\cdot c(y_s,y_s) \thinspace ds\right|^2$$
is provided in Lemma \ref{L:Remainder-Est-Levy} which shows that ${R}_t^{\eps,\delta}= \mathscr{O}(\delta^2),$ whenever $0< \eps < \eps_0.$ We will require the estimate for ${R}_t^{\eps,\delta}$ in the proof of Lemma \ref{L:M2-Est-Levy} below.

\begin{lemma}\label{L:Remainder-Est-Levy}
Let $y_t$ be the solution of equation \eqref{E:closed-system}. Then, for any fixed $T>0,$ there exists a positive constant $C_{\ref{L:Remainder-Est-Levy}}(T)$ such that 
\begin{equation*}
\begin{aligned}
\sup_{0 \le t \le T}{ R}_t^{\eps,\delta} \le \delta^2 (\ell+1)^2 C_{\ref{L:Remainder-Est-Levy}}(T).
 \end{aligned}
\end{equation*}
\end{lemma}
\begin{proof}
Let $\hh: \BR^n \to \BR^n$ be defined by $\hh(y)\triangleq D_2c(y,y)\cdot c(y,y)$. For $R_t^{\eps,\delta}$,
\begin{equation}\label{E:Rem-Eq}
\begin{aligned}
\left| \sum_{i=0}^{\lfloor \frac{t}{\delta}\rfloor-1}\delta \thinspace {\sf h}(y_{i\delta}) -\int_0^t {\sf h}(y_s)\thinspace ds\right|^2 &\lesssim 
\left|\sum_{i=0}^{\lfloor \frac{t}{\delta}\rfloor-1}\int_{i\delta}^{(i+1)\delta}\left\{{\sf h}(y_s)-{\sf h}(y_{i\delta})\right\}ds\right|^2 + \left| \int_{\delta\lfloor \frac{t}{\delta}\rfloor}^{t}{\sf h}(y_s)ds\right|^2\\
& \lesssim \sum_{i=0}^{\lfloor \frac{t}{\delta}\rfloor-1}\int_{i\delta}^{(i+1)\delta}\left|{\sf h}(y_s)-{\sf h}(y_{i\delta})\right|^2 ds+  \int_{\delta\lfloor \frac{t}{\delta}\rfloor}^{t}\left|{\sf h}(y_s)\right|^2 ds.
\end{aligned}
\end{equation}
Writing ${\sf h}(y_s)-{\sf h}(y_{i\delta})$ in the first term of the right hand side of the above equation as ${\sf h}(y_s)-D_2c(y_{i\delta},y_{i\delta})\cdot c(y_s,y_s)+ D_2c(y_{i\delta},y_{i\delta})\cdot c(y_s,y_s) -{\sf h}(y_{i\delta})$ and recalling the definition of $\hh(x)$, we get
\begin{equation*}
\left|{\sf h}(y_s)-{\sf h}(y_{i\delta}) \right|^2 \lesssim |D_2c(y_{i\delta},y_{i\delta})\{c(y_{i\delta},y_{i\delta})-c(y_s,y_s)\}|^2 
+ |\{D_2c(y_{i\delta},y_{i\delta})-D_2c(y_s,y_s)\}c(y_s,y_s)|^2.
\end{equation*} 
For $\jj(y,y)\triangleq D_2c(y,y):\BR^n \times \BR^n \to \BR^{n\times n},$ the entries of $\jj$ are the first order partial derivatives of $c(x,y)$ with respect to the second variable. Using Taylor's formula for each entry of $\jj$ (denoted by $\jj_{ij}$), we have 
\begin{equation}
\jj_{ij}(y_{i\delta},y_{i\delta})-\jj_{ij}(y_t,y_t)= \nabla \jj_{ij}(z_{ij})\cdot \{(y_{i\delta},y_{i\delta})-(y_t,y_t)\},
\end{equation}
for each $i,j=1,\cdots,n.$ Here, $z_{ij}\in \BR^n\times \BR^n$ are points lying on the line segment joining $(y_{i\delta},y_{i\delta})$ and $(y_t,y_t).$ Employing the boundedness of the second-order partial derivatives of $c$ (Assumption \ref{A:Derivative}), we have

\begin{equation*}
\begin{aligned}
|\jj(y_{i\delta},y_{i\delta})-\jj(y_t,y_t)|&=\max_{1\le j \le n}\sum_{i=1}^n|\jj_{ij}(y_{i\delta},y_{i\delta})-\jj_{ij}(y_t,y_t)|
\le \max_{1\le j \le n}\sum_{i=1}^n \|\nabla \jj_{ij}(z_{ij})\|\|(y_{i\delta},y_{i\delta})-(y_t,y_t)\|\\
& \lesssim |(y_{i\delta},y_{i\delta})-(y_t,y_t)|\lesssim |y_{i\delta}-y_t|. 
\end{aligned} 
\end{equation*}
Using the linear growth of $c(y_t,y_t)$, boundedness of $D_2c(y_t,y_t)$ and Lipschitz continuity of $c$ and Lemma \ref{L:Sampling-Difference-Levy}\color{black}, we get
\begin{equation}\label{E:h-Levy}
\left|{\sf h}(y_s)-{\sf h}(y_{i\delta}) \right|^2 \lesssim |y_{i\delta}-y_s|^2 \lesssim \delta^2.
\end{equation}
Now combining this estimate with equation \eqref{E:Rem-Eq} and recalling the definition of $R_t^{\eps,\delta}$ and equation \eqref{E:eps0}, we get the required estimate.
\end{proof}

\begin{proof}[Proof of Lemma \ref{L:M1-small}]
Recalling the definition of ${M}_1^{\eps,\delta}(t)$ and using the similar arguments to the proof of Lemma \ref{L:Remainder-Est-Levy} (in particular, equation \eqref{E:h-Levy}), we obtain the required estimate.
\end{proof}

\begin{proof}[Proof of Lemma \ref{L:M2-Est-Levy}]
Recalling the definition of ${ M}_2^{\eps,\delta}(t)$ and $\dd(t)$ from equations \eqref{E:M2-decomposition}  and  \eqref{E:ell-Levy} respectively, we have
\begin{multline}\label{E:M2-l}
{M}_2^{\eps,\delta}(t)-\dd(t) = \int_0^t D_2c(y_{\pi_\delta(s)}, y_{\pi_\delta(s)})\int_{\pi_\delta(s)}^s \frac{c(y_{\pi_\delta(r)}, y_{\pi_\delta(r)})}{\eps} \thinspace dr \thinspace ds\\
-\frac{\ell}{2}\int_0^t D_2c(y_s,y_s)\cdot c(y_s,y_s) \thinspace ds.
\end{multline}
Now, writing the integral $\int_0^t D_2c(y_{\pi_\delta(s)}, y_{\pi_\delta(s)})\int_{\pi_\delta(s)}^s {c(y_{\pi_\delta(r)}, y_{\pi_\delta(r)})} \thinspace dr \thinspace ds$ in the above equation \eqref{E:M2-l} as $\sum_{i=0}^{\lfloor \frac{t}{\delta}\rfloor-1}\int_{i\delta}^{(i+1)\delta}D_2c(y_{i\delta},y_{i\delta})\cdot c(y_{i\delta},y_{i\delta})(s-i \delta) \thinspace ds + \int_{\delta\lfloor \frac{t}{\delta}\rfloor}^{t} D_2c(y_{\pi_\delta (s)},y_{\pi_\delta (s)})\cdot c(y_{\pi_\delta (s)},y_{\pi_\delta (s)})(s-\pi_\delta(s)) \thinspace ds,$ we get

\begin{equation*}
\begin{aligned}
{M}_2^{\eps,\delta}(t)-\dd(t)&= (1/\eps)\sum_{i=0}^{\lfloor \frac{t}{\delta}\rfloor-1}\int_{i\delta}^{(i+1)\delta}D_2c(y_{i\delta},y_{i\delta})\cdot c(y_{i\delta},y_{i\delta})(s-i \delta) \thinspace ds  - \frac{\ell}{2}\int_0^t D_2c(y_s,y_s)\cdot c(y_s,y_s) \thinspace ds\\
&\qquad \qquad \qquad \qquad \qquad \quad +(1/\eps)\int_{\delta\lfloor \frac{t}{\delta}\rfloor}^{t} D_2c(y_{\pi_\delta (s)},y_{\pi_\delta (s)})\cdot c(y_{\pi_\delta (s)},y_{\pi_\delta (s)})(s-\pi_\delta(s)) \thinspace ds.
\end{aligned}
\end{equation*}

%\begin{multline*}
%\int_0^t \int_{\pi_\delta(s)}^s \frac{y_{\pi_\delta(r)-}}{\eps}\thinspace dr \thinspace ds -\frac{\cc}{2}\int_0^t y_s \thinspace ds= \frac{1}{\eps}\sum_{i=0}^{\lfloor \frac{t}{\delta}\rfloor-1}\int_{i\delta}^{(i+1)\delta}y_{i\delta}(s-i \delta) \thinspace ds - \frac{\cc}{2}\int_0^ty_s \thinspace ds \\
%+ \frac{1}{\eps}\int_{\delta\lfloor \frac{t}{\delta}\rfloor}^{t} y_{\pi_\delta (s)}(s-\pi_\delta(s)) \thinspace ds.
%\end{multline*}
We note here that $\int_{i\delta}^{(i+1)\delta}D_2c(y_{i\delta},y_{i\delta})\cdot c(y_{i\delta},y_{i\delta})(s-i \delta) \thinspace ds=D_2c(y_{i\delta},y_{i\delta})\cdot c(y_{i\delta},y_{i\delta})({\delta^2}/{2}).$ Thus, taking norm on both sides of the above equation and then using \eqref{E:Triangle-type-ineq-Levy}, we obtain 
\begin{multline*}
\left|{M}_2^{\eps,\delta}(t)-\dd(t) \right|^2 \lesssim \left|\frac{\delta}{2\eps}\sum_{i=0}^{\lfloor \frac{t}{\delta}\rfloor-1}\delta D_2c(y_{i\delta},y_{i\delta})\cdot c(y_{i\delta},y_{i\delta}) \right. \\
 \left. \qquad \qquad \qquad \qquad \qquad \qquad \qquad -\frac{1}{2}\left(\ell -\frac{\delta}{\eps}+ \frac{\delta}{\eps} \right)\int_0^t D_2 c(y_s,y_s)\cdot c(y_s,y_s) \thinspace ds\right|^2\\
 +\delta^2 \left| D_2c(y_{\pi_\delta (t)},y_{\pi_\delta (t)})\cdot c(y_{\pi_\delta (t)},y_{\pi_\delta (t)}) \right|^2\frac{\delta^2}{\eps^2}\\
\lesssim \frac{1}{2^2}\frac{\delta^2}{\eps^2}\left|\sum_{i=0}^{\lfloor \frac{t}{\delta}\rfloor-1}\delta D_2c(y_{i\delta},y_{i\delta})\cdot c(y_{i\delta},y_{i\delta})-\int_0^t D_2 c(y_s,y_s)\cdot c(y_s,y_s) \thinspace ds\right|^2\\
+ \frac{1}{2^2}\left|\ell -\frac{\delta}{\eps} \right|^2 \left|\int_0^t D_2 c(y_s,y_s)\cdot c(y_s,y_s) \thinspace ds \right|^2 \\
+ \delta^2 \left| D_2c(y_{\pi_\delta (t)},y_{\pi_\delta (t)})\cdot c(y_{\pi_\delta (t)},y_{\pi_\delta (t)}) \right|^2\frac{\delta^2}{\eps^2}.
\end{multline*}

%\begin{multline*}
%\left|\int_0^t \int_{\pi_\delta(s)}^s \frac{y_{\pi_\delta(r)-}}{\eps} \thinspace dr \thinspace ds -\frac{\cc}{2}\int_0^t y_s \thinspace ds \right|^2 \lesssim \left|\frac{\delta}{2\eps}\sum_{i=0}^{\lfloor \frac{t}{\delta}\rfloor-1}\delta y_{i\delta}-\frac{1}{2}\left(\cc -\frac{\delta}{\eps}+ \frac{\delta}{\eps} \right)\int_0^t y_s \thinspace ds\right|^2 + \left|\frac{\delta^2}{\eps}y_{\pi_\delta (t)}\right|^2\\
%\lesssim \left|\frac{\delta}{2\eps}\sum_{i=0}^{\lfloor \frac{t}{\delta}\rfloor-1}\delta y_{i\delta}-\frac{\delta}{2\eps}\int_0^t y_s \thinspace ds\right|^2 + \frac{1}{4}\left|\cc- \frac{\delta}{\eps}\right|^2 \left|\int_0^t y_s \thinspace ds \right|^2 + \left|\frac{\delta^2}{\eps}y_{\pi_\delta (t)}\right|^2.
%\end{multline*}
Now, using the liner growth of $c(y_s,y_s)$ and boundedness of $D_2c(y_s,y_s)$, we get
\begin{multline*}
\left|{M}_2^{\eps,\delta}(t)-\dd(t) \right|^2 
 \lesssim
\frac{\delta^2}{2^2\eps^2}\left|\sum_{i=0}^{\lfloor \frac{t}{\delta}\rfloor-1}\delta D_2c(y_{i\delta},y_{i\delta})\cdot c(y_{i\delta},y_{i\delta})-\int_0^t D_2 c(y_s,y_s)\cdot c(y_s,y_s) \thinspace ds\right|^2\\
+\left(\frac{\delta^{4}}{\eps^2}+\frac{\varkappa^2(\eps)}{2^2} \right)\left(1+\sup_{0\le t \le T}|y_t|^2 \right).
\end{multline*}

Next, putting the above estimate in equation \eqref{E:M2-l}, the required result follows by Lemmas \ref{L:Linear-Sys-Sampling-Est-Levy}, \ref{L:Remainder-Est-Levy} and equation \eqref{E:eps0}.
\end{proof}

\begin{proof}[Proof of Lemma \ref{L:M3-Est-WhiteNoise-Levy}]
For $t \in [0,T],$ recalling the definition of ${\sf M}_3^{\eps,\delta}(t)$ from equation \eqref{E:M-Rep-Levy} and using H$\ddot{\text{o}}$lder's inequality, we obtain
\begin{equation*}
|{\sf M}_3^{\eps,\delta}(t)|^2 \le \left(\int_0^t|D_2c(y_s,y_s)|^{2}\thinspace ds\right) \left(\int_0^t \left|\int_{\pi_{\delta}(s)}^{s}\sigma(Y_{u-}^{\eps,\delta}) \thinspace dW_u\right|^2\thinspace ds\right).
\end{equation*}
Taking supremum over $[0,T]$ followed by expectation and then using Assumption \ref{A:Derivative} for the boundedness of $D_2c(y_s,y_s)$, we have
\begin{equation}\label{E:M3-CLT-Levy-I}
\BE\left[\sup_{0\le t \le T}|{\sf M}_3^{\eps,\delta}(t)|^2\right]\lesssim
T \int_0^T \BE  \left|\int_{\pi_{\delta}(s)}^{s}\sigma(Y_{u-}^{\eps,\delta})\thinspace dW_u\right|^2 ds.
\end{equation}
Now, let $\sigma_i \in \BR^n$, $1 \le i \le n$, represent the columns of the matrix $\sigma$, then
\begin{equation*}
\begin{aligned}
\BE \left|\int_{\pi_{\delta}(s)}^{s}\sigma(Y_{u-}^{\eps,\delta})\thinspace dW_u\right|^2 
%&= \BE  \left|\int_{\pi_{\delta}(s)}^{s}\sum_{i=1}^{n}\sigma_i(X_u^{\eps,\delta})dW_u^i\right|^2 
&=\BE \left|\sum_{i=1}^{n}\int_{\pi_{\delta}(s)}^{s}\sigma_i(Y_{u-}^{\eps,\delta})\thinspace dW_u^i\right|^2 
 \lesssim \BE \sum_{i=1}^{n}\left|\int_{\pi_{\delta}(s)}^{s}\sigma_i(Y_{u-}^{\eps,\delta})\thinspace dW_u^i \right|^2 \\
&\lesssim \sum_{i,j=1}^{n} \BE \left| \int_{\pi_{\delta}(s)}^{s}\sigma_{ji}(Y_{u-}^{\eps,\delta})\thinspace dW_u^i\right|^2 
 = \sum_{i,j=1}^{n}  \BE \int_{\pi_{\delta}(s)}^{s}\sigma_{ji}^2(Y_{u-}^{\eps,\delta})\thinspace du, 
%= C(n)\BE \left(\int_{\pi_{\delta}(s)}^{s}\|\sigma(X_u^{\eps,\delta})\|_F^2 du \right),
\end{aligned}
\end{equation*}
where the last equality of the above equation follows by It\^{o} isometry for stochastic integrals. Next, using the linear growth property of $\sigma$ from Assumption \ref{A:Linear-growth-condition-Levy} in the above equation followed by a little algebra, we have 
\begin{equation}\label{E:M3-CLT-Levy-II}
\begin{aligned}
\BE  \left|\int_{\pi_{\delta}(s)}^{s}\sigma(Y_{u-}^{\eps,\delta})\thinspace dW_u\right|^2
%  & \lesssim  \left(\int_{\pi_{\delta}(s)}^{s}(1+\BE|Y_{u-}^{\eps,\delta}|^2)\thinspace du\right)\\
& \lesssim \left(\int_{\pi_{\delta}(s)}^{s} \left( 1+ |y_u|^2 +\BE \left[\sup_{0 \le u \le T} |Y_u^{\eps,\delta}-y_u|^2 \right]  \right)du \right).
\end{aligned}
\end{equation}
Putting this last expression in equation \eqref{E:M3-CLT-Levy-I}, and then using Theorem \ref{T:LLN} and Lemma \ref{L:Linear-Sys-Sampling-Est-Levy}, we get the required estimate.
\end{proof}

\begin{proof}[Proof of Lemma \ref{L:M4-Est-PoissonNoise}]
Recalling the definition of ${\sf M}_4^{\eps,\delta}(t)$ from Lemma \ref{L:M-terms-levy} and using H$\ddot{\text{o}}$lder's inequality, we get 
$$|{\sf M}_4^{\eps,\delta}(t)|^2 \le \left(\int_0^t|D_2c(y_s,y_s)|^{2}\thinspace ds\right) \left(\int_0^t \left| \int_{\pi_\delta(s)}^s \int_{E}F(Y_{r-}^{\eps,\delta},x)\widetilde{N}(dr,dx)\right|^2 ds\right).$$ Taking supremum over $[0,T]$ followed by expectation and then using Assumption \ref{A:Derivative} for the boundedness of $D_2c(y_s,y_s)$, we have
\begin{equation}\label{E:M4-CLT-Levy-I}
\BE\left[\sup_{0\le t \le T}|{\sf M}_4^{\eps,\delta}(t)|^2 \right]\lesssim
T \int_0^T \BE  \left| \int_{\pi_\delta(s)}^s \int_{E}F(Y_{r-}^{\eps,\delta},x)\widetilde{N}(dr,dx)\right|^2  ds.
\end{equation}
Now,
\begin{equation*}
\begin{aligned}
\BE  \left| \int_{\pi_\delta(s)}^s \int_{E}F(Y_{r-}^{\eps,\delta},x)\widetilde{N}(dr,dx)\right|^2 & \lesssim \BE\left[  \sum_{i=1}^{n} \left| \int_{\pi_\delta(s)}^s \int_{E}F_i(Y_{r-}^{\eps,\delta},x)\widetilde{N}(dr,dx)\right|^2\right]\\
& =  \sum_{i=1}^{n} \BE \left| \int_{\pi_\delta(s)}^s \int_{E}F_i(Y_{r-}^{\eps,\delta},x)\widetilde{N}(dr,dx)\right|^2.
\end{aligned}
\end{equation*}
Using It\^{o} isometry for the right hand side of the above equation and then linear growth condition on $F$ from Assumption \ref{A:Linear-growth-condition-Levy} with $\nu(E)<\infty$, we have
% i.e., $\int_{E}|F(y,x)|^2\nu(dx)\lesssim (1+ |y|^2),$ we have
%Using martingale moment inequality and H$\ddot{\text{o}}$lder's inequality for the right hand side of the above equation and then linear growth condition on $F$ from Assumption \ref{A:Linear-growth-condition-Levy}, i.e., $\int_{E}|F(y,x)|^2\nu(dx)\lesssim (1+ |y|^2),$ we have
\begin{equation}\label{E:M4-CLT-Levy-II}
\begin{aligned}
\BE \left| \int_{\pi_\delta(s)}^s \int_{E}F(Y_{r-}^{\eps,\delta},x)\widetilde{N}(dr,dx)\right|^2  & \lesssim \sum_{i=1}^{n} \BE\left[\int_{\pi_\delta(s)}^s \int_{E}|F_i(Y_{r-}^{\eps,\delta},x)|^2 \thinspace dr \thinspace \nu(dx) \right]\\
& \lesssim \sum_{i=1}^{n} \BE\left[\int_{\pi_\delta(s)}^s (1+ |Y_{r-}^{\eps,\delta}|^2)\thinspace dr\right].
\end{aligned}
\end{equation}
Now, combining the arguments of equation \eqref{E:M3-CLT-Levy-II} with equation \eqref{E:M4-CLT-Levy-I} and \eqref{E:M4-CLT-Levy-II}, we get the required result.
\end{proof}

\section{Conclusions and future research directions}\label{S:Conclusion-Levy}
In this paper, we studied the dynamics of a hybrid nonlinear system under the combined effect of fast periodic sampling and small random perturbations with jumps. The dynamics of stochastically perturbed system is found close to the idealized one.
% this result is interpreted as the {\sc lln} type result. 
Further, in the fluctuation study of rescaled process around its mean, the limiting {\sc sde}, interestingly, has an extra drift term capturing both the sampling and noise effect. The limiting {\sc sde} is found to vary, depending on the relative rates at which the two small parameters approach zero.
% This result can be interpreted as the {\sc clt} type result.  

%\color{blue}
There are a couple of future research directions to extend the present work. Firstly, being motivated by the recent articles, for example, \cite{cerrai2009khasminskii,fu2011averaging,xu2015strong},  one can explore the present work in the context of stochastic partial differential equations ({\sc spde}), where the drift in our main equation \eqref{E:sde-intro} can be replaced by some suitable infinite dimensional operators. It is interesting to see here how does the classical Khasminskii, or the approaches presented in the articles \cite{fu2011averaging,xu2015strong}, are useful. Secondly, it is also interesting and a bit challenging to study the {\sc sde} \eqref{E:sde-intro} with the \textit{non-Lipschitz} drift and diffusion coefficients. For a non-Lipschitz case, recently, \cite{negrea2023pathwise} studied a pathwise uniqueness result for {\sc sde} with general drift and diffusion coefficients. So, checking whether the sufficient conditions provided in \cite{negrea2023pathwise} work in our sampling problem, may be a separate and follow-up project. Finally, we are also interested to explore the asymptotic analysis of rare events in terms of certain rate functions, so-called {\sc ldp}.

\subsection*{Acknowledgments}
The author would like to thank Prof. Chetan D. Pahlajani for his helpful discussion throughout the manuscript preparation and the referees for their helpful comments and suggestions. I also want to thank the Discipline of Mathematics, {\sc iit} Gandhinagar where this work was completed as a part of my Ph.D. thesis.

\bibliographystyle{alpha}
\bibliography{references}

\end{document}